\newtheorem{theorem}{Theorem}[section]
\newtheorem{lemma}[theorem]{Lemma}
\newtheorem{remark}[theorem]{Remark}
\newtheorem{definition}[theorem]{Definition}
\newtheorem{example}[theorem]{Example}
\newtheorem{proposition}[theorem]{Proposition}
\newtheorem{corollary}[theorem]{Corollary}
\newtheorem{assumptions}[theorem]{Assumptions}
\numberwithin{equation}{section}
\def\R{\mathbb R}
\def\S{\mathbb S}
\def\div{{\rm div}\,}
\newcommand{\1}{\mathbf 1}
\def\a{{\bf a}}
\def\z{{\bf z}}
\def\v{{\bf v}}
\def\LL{\mathcal{L}}
\renewcommand{\H}{{\mathcal H}}
\begin{document}

\title{Anisotropic tempered diffusion equations}
\author{Juan Calvo, Antonio Marigonda and Giandomenico Orlandi}

\date{\today}
\maketitle

\begin{abstract}
We introduce a functional framework which is specially suited to formulate several classes of anisotropic evolution equations of tempered diffusion type. Under an amenable set of hypothesis involving a very natural potential function, these models can be shown to belong to  the entropy solution framework devised by \cite{Andreu2005elliptic,Andreu2005parabolic}, therefore ensuring well-posedness. We connect the properties of this potential with those of the associated cost function, thus providing a link with optimal transport theory and a supply of new examples of relativistic cost functions. Moreover, we characterize the anisotropic spreading properties of these models and we determine the Rankine--Hugoniot conditions that rule the temporal evolution of jump hypersurfaces under the given anisotropic flows.
\end{abstract}

\section{Introduction}
 \label{sec:intro}
 The goal of this document is to analyze the properties of a class of tempered diffusion equations having the form
 \begin{equation}
\label{eq:anisotropic_template0}
\frac{\partial u}{\partial t} = \div \left(u\,  \psi \left(\nabla u/u\right)\right)
\end{equation}
 where $\psi$ satisfies certain assumptions enforcing anisotropic spreading behavior. This work is a continuation of \cite{Calvo2015}, where the isotropic situation was analyzed. Here we reformulate that framework in a more synthetic way that enables us to incorporate anisotropies  naturally.
 
 Tempered diffusion equations are a class of degenerate parabolic equations in divergence form, characterized by the fact that their flux saturates to a constant value whenever the size of solution's gradient is large enough. This class comes also under the names of ``flux-saturated'' or `flux-limited'' diffusion equations in the mathematical literature. Actually, the most studied example of a tempered diffusion equation in the mathematical literature is the so-called ``relativistic heat equation'':
\begin{equation}
\label{eq:rhe0}
\frac{\partial u}{\partial t} = \nu \, \div \left( \frac{u \nabla u}{\sqrt{u^2 +\frac{\nu^2}{c^2} |\nabla u|^2}}\right)\:.
\end{equation}
This model was first introduced in \cite{Rosenau92}; the ``relativistic heat equation'' terminology was coined in \cite{Brenier1}, where important connections with optimal transport theory were pointed out. The properties of \eqref{eq:rhe0} have been studied both from analytical and numerical points of view in a long series of papers \cite{Andreu2006,Andreu2008,Andreu2010,Andreu2012,Calvo2017,Carrillo2013,Marquina,Serna} -many variants of this model have been also considered, see e.g. \cite{Calvo2013,Calvo2015survey,Calvo2016,Campos} and references therein. As a matter of fact, this nonlinear diffusion equation combines well-known properties of parabolic and hyperbolic equations. Our present knowledge about those properties is far from being complete, though. We can get some intuition about this mixed behavior by considering the following asymptotic regimes: when $c\to \infty$ we obtain the standard heat equation (see \cite{Caselles2007})
\begin{equation}
\label{eq:he}
\frac{\partial u}{\partial t} = \nu \Delta u,
\end{equation}
whereas for $\nu \to \infty$ we arrive to (see \cite{Andreu2007})
\begin{equation}
\label{eq:trmedia}
\frac{\partial u}{\partial t} = c\, \div \left(u \frac{Du}{|Du|} \right).
\end{equation}
Note that \eqref{eq:trmedia} is a hyperbolic equation with finite propagation speed given by $c$ which is capable of supporting discontinuous fronts. Those properties of (\ref{eq:trmedia}) are inherited by (\ref{eq:rhe0}) \cite{Andreu2006}; actually, this is due to the fact that the behavior of solutions to  (\ref{eq:rhe0}) at their interfaces is determined essentially by (\ref{eq:trmedia}). On the other hand, the behavior in the bulk is controlled by (\ref{eq:he}). This transition between the two regimes is controled by the size of $\frac{\nu}{c} \frac{|\nabla u|}{u}$ indeed. Actually, there are many models that share roughly the same dynamics and only differ in the way they interpolate between (\ref{eq:he}) and (\ref{eq:trmedia}). A very general family of models of the form 
\begin{equation}
\label{eq:a_form}
\frac{\partial u}{\partial t} = \div ( \a(u,\nabla u))
\end{equation} 
 has been studied in \cite{Andreu2005elliptic,Andreu2005parabolic} for a flux function $\a(z,\xi)$ whose modulus saturates to a constant value when $|\xi|\to \infty$. These works show well-posedness for \eqref{eq:a_form} in the class of \emph{entropy solutions}. A convenient subfamily of \eqref{eq:a_form} is considered in \cite{Calvo2015}. 

To the best of our knowledge, most of the literature concerning tempered diffusion equations is devoted to models that are isotropic in the following sense: maximum pro\-pa\-gation speeds in the hyperbolic regime do not depend on the direction of $\nabla u$. However, there are natural situations in which one would like to consider spreading speeds that do depend on the specific direction the gradient of the solution points to. As an example, we recall here the one-dimensional models for morphogenesis that have been introduced in \cite{Verbeni2013} and analyzed in \cite{Andreu2012,Calvo2011}; in more realistic representations the actual extracellular medium should be described as a three-dimensional position space. In order to proceed with such generalizarions we have to consider the fact that the extracellular matrix is highly anisotropic and thus favors some privileged spreading directions locally. Some chemotaxis models based on flux-saturated spreading can acommodate this type of effects \cite{Chertock2012}. A similar situation takes place for brain tumors, where glioma cells tend to move preferentially along the directions given by white matter tracts -see e.g. \cite{Engwer}. Our aim in this document is to formulate an extension of the class in \cite{Calvo2015} to make sense of anisotropic spreading. Once this framework is well-grounded we will proceed to analyze the properties of these new families of equations.

Anisotropic diffusion equations constitute an active research topic, especially in connection with geometric flows and Finsler metrics \cite{Antontsev2009,Chambolle2017,Duzgun,Mazon2016,Moll2005,Wang2011}. Applications to image processing also abound, e.g. diffusion tensor imaging or Perona--Malik equations and denoising, see for example \cite{Aubert,Calderero,Chan,Esedoglu,Lasica,Soares} and references therein. Our work contributes to expand the scope of this area by considering anisotropic nonlinearities of tempered diffusion type. In passing, we contribute to simplify the original theory of \cite{Andreu2005elliptic,Andreu2005parabolic} (although here we consider a proper subclass of theirs) by deriving all relevant properties from a very synthetic and amenable hypothesis set on the model -compare also with the hypothesis list in \cite{Calvo2015}. 

Let us be more specific about the structure of this document. Next subsection is devoted to introduce specific notations and state our results. Section \ref{sec:frm} lays the functional framework to make sense of anisotropic tempered diffusion equations within the entropy solution framework set by \cite{Andreu2005elliptic,Andreu2005parabolic} and, as a consequence, the well-posedness of every model within the proposed framework will follow. A series of examples are presented at the end of the section. We state some connections with optimal transport theory in Section \ref{sec:optr}. In particular we derive the cost functions associated with anisotropic tempered spreading. Then Section \ref{sec:comp} provides a complementary view on the subject by characterizing spreading rates by means of comparison principles with suitable sub- and super-solutions. The goal of Section \ref{sec:RH} is to describe the temporal evolution of jump discontinuities, whose dynamics will strongly depend on their orientation with respect to the anisotropic spreading field. The document concludes with an appendix summarizing the functional framework needed to make sense of the class of {\em entropy solutions} to tempered diffusion equations.

%%%%%%%%%%%
\subsection{Summary of notational conventions and statement of the results}

We will work in the euclidean space $\R^d, \, d\ge 2$ endowed with the standard scalar product, that we denote as $u\cdot v$ for $u,v\in \R^d$. Unless otherwise stated, $u\in \R^d$ is regarded as a row vector and $u^T$ denotes its transpose vector. We use $|\cdot|$ to denote either the modulus of a vector or the absolute value of a number; this will be clear from the context. Given $r\in \R^d \backslash \{0\}$ we define the associated direction $\theta_r \in \mathbb{S}^{d-1}$ as $\theta_r:=r/|r|$. We write $u_i$ for the i-th component of $u\in \R^d$ and $a_{ij}$ for the $(i,j)$-element of a matrix $A$. The determinant of a square matrix $A$ will be denoted by $\det (A)$. The tensor product of $u,v \in \R^d$ is indicated by $u\otimes v$ and yields a $d\times d$ matrix. We recall that a symmetric $d\times d$ matrix $A$ is positive semidefinite (resp. positive definite), denoted as $A \ge 0$ (resp. $A > 0$), whenever $r A r^T \ge 0$ (resp. $r A r^T > 0$) for every $r\in \R^d\backslash \{0\}$. The Kronecker delta is $\delta_{ij}=1$ if $i=j$, zero otherwise. Einstein's summation convention {shall not} be used in this document.
 
We will denote the closure and interior of a given set $S$ by $\overline S$ and ${\rm int}\, S$ respectively. The complement of a set $S \subset \R^d$ is $S^c:=\{x\in \R^d/ x\notin S\}$. Let $B(p,r)$ stand for the open ball (and $\overline B(p,r)$ for the closed ball) centered at $p$ and having radius $r$. The Minkowski sum of two sets $A,B \subset \R^d$ is defined as 
$$
A \oplus B := \{x+y/ x\in A,\, y \in B\}\:.
$$
The dilation of a set $A$ by a positive number $\lambda$ is given by $\lambda A:= \{\lambda x/ x\in A\}$. Given a set $A \subset \R^d$ we let its indicator function be
$$
\chi_A(u):=\left\{
\begin{array}{c}
0 \quad \mbox{if}\, \, u\in A 
\\
+\infty \quad \mbox{otherwise} 
\end{array}
\right.
$$
and the characteristic function 
$$
\1_A(u):=\left\{
\begin{array}{l}
1 \quad \mbox{if}\, u\in A 
\\
0 \quad \mbox{otherwise}\:. 
\end{array}
\right.
$$
We shall also use the support function of a set $A \subset \R^d$,
$$
\sigma_A(u):= \sup_{u^* \in A}  u \cdot u^*\:, \quad u\in \R^d.
$$
Given $f$ a proper function defined in $\R^d$, we write its Legendre--Fenchel transform as
$$
f^*(y):= \sup_{x\in \R^d}\,  \{y \cdot x -f(x)\}\:, \quad y\in \R^d.
$$
We say that a function $f:\R^n\rightarrow \R^m$ is positively homogeneous/1-homogeneous whenever $f(\lambda x)= |\lambda| f(x)$ for every $\lambda \in \R$ and every $x\in \R^n$. We will use the notation $f_\infty$ to denote the associated recession function, defined as
$$
f_\infty : \S^{d-1} \rightarrow \R^m,\quad  f_\infty(\xi) = \lim_{t \to 0^+} t\, f(\xi/t).
$$
Note that $f_\infty$ is the support function of $f^*$ (see e.g. \cite{Rockafellar}, Th. 13.3) and hence
$$
{\rm dom}\, f^*=\{y\in \R^d/y\cdot x \le f_\infty(x)\, \forall x\in \R^d\}\:.
$$

Given an open set $\Omega \subset \R^d$ we denote by $\mathcal{D}(\Omega)$ the space of infinitely differentiable functions with compact support in $\Omega$. The space of continuous functions with compact support in $\Omega$ will be denoted by $C_c(\Omega)$. In a similar way, $L^p(\Omega)$ and $C^k(\Omega)$ denote Lebesgue spaces of p-integrable functions and spaces of functions of class k. Given a vector field $\psi \in C^1(\R^d;\R^d)$, we denote its Jacobian matrix as $D\psi$. Similarly, for $\Phi \in C^2(\R^d)$ we denote by $D^2\Phi$ its Hessian matrix. We use $\|\cdot\|_p$ to denote the norm in $L^p(\Omega)$, the base set will be clear from the context. Given $u : \Omega \rightarrow \R$, ${supp}\, u$ denotes the essential support. For any $T > 0$, we let $Q_T := (0, T ) \times \R^d$ and we write $u = u(t, x)$ for functions defined in $Q_T$. We recall that $O()$ and $o()$ are the standard Landau symbols, while $\sim$ indicates asymptotic equivalence.

Let $\LL^d$ and $\H^{d-1}$ stand for the $d$-dimensional Lebesgue measure and the $(d-1)$-dimensional Hausdorff measure respectively. We will indicate the supporting set for $\H^{d-1}$ as a subscript whenever necessary. We say that $u\in L^1(\Omega)$ is a function of bounded variation ($u\in BV(\Omega)$ for short) whenever its distributional derivative $Du$ is a vector-valued Radon measure with finite total variation in $\Omega$. We use $|Du|$ to denote the associated total variation measure. In a similar fashion, we say that $u\in BV_{loc}(\Omega)$ whenever $u\in BV(K)$ for every compact set $K\subset \Omega$. Recall that given $u\in BV(\Omega)$ we can decompose its derivative as $Du = D^{ac}u+D^su$ -absolutely continuous and singular parts. We have $D^{ac}u= \nabla u \LL^d$, being $\nabla u$ the Radon--Nikodym derivative of $Du$ with respect to $\LL^d$. We can split $D^su$ into the jump part $D^ju$ and the Cantor part $D^cu$. See e.g. \cite{Ambrosio2000} for more details.

\newpage
The main results of the document are summarized in the following statement.
\begin{theorem}
\label{th:1}
Consider an equation of the form \eqref{eq:anisotropic_template0}. Let $\psi = \nabla \Phi$ with $\Phi$ satisfying Assumptions \ref{ass:1} below. Then the following assertions hold true:
\begin{enumerate}

\item The Cauchy problem for \eqref{eq:anisotropic_template0} is well-posed in the sense of \cite{Andreu2005elliptic,Andreu2005parabolic}.

\item (Cost functions with bounded domain) Let $k:\R^d \rightarrow \R_0^+ \cup \{+\infty\}$ be a cost function defined via $k=\Phi^*$. Then $\overline{\mbox{dom}\, k}$ coincides with the dual unit ball (Wulff shape) of $\Phi_\infty$, the recession function of $\Phi$. Moreover, \eqref{eq:anisotropic_template0} can be recovered from the point of view of optimal mass transport problems as the (formal) limit of  the Jordan--Kinderlerher--Otto minimization scheme with the Boltzmann entropy and the Wasserstein distance associated with the cost function $k$.

\item (Evolution of the support) Let $u(t)$ be the  entropy solution of \eqref{eq:anisotropic_template0} with initial datum $u_0$. Assume that $\mbox{supp}\, u_0$ is either convex or has a boundary of class $C^2$. Then there holds that $\mbox{supp}\, u(t) \subset \mbox{supp}\, u_0 \oplus t\, \overline{\mathrm{dom}\, k}$ for any $t \ge 0$.

\item (Evolution of jump discontinuities) Let $u \in C([0,T];L^1(\R^d))$ be the entropy solution of \eqref{eq:anisotropic_template0} with $0\le u(0)=u_0 \in L^\infty(\R^d)\cap BV(\R^d)$. Assume that $u \in BV_{loc}(Q_T)$. Then the speed $\v(t,x)\in \R^d$ of any discontinuity front is given by
$$
\v=\Phi_\infty \left(\nu^{J_{u(t)}}\right)
$$
where $\nu^{J_{u(t)}}$ is the unit normal to the jump set of $u(t)$.

\end{enumerate}
\end{theorem}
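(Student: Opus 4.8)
The plan is to read \eqref{eq:anisotropic_template0} as a divergence-free conservation law in space-time and to extract the jump relation from the structure of the entropy solution together with the convex-analytic properties of the (convex) potential $\Phi$. First I would recall that, by the very definition of entropy solution in the sense of \cite{Andreu2005parabolic} recalled in the appendix, the solution $u$ comes equipped with a bounded vector field $\z\in L^\infty(Q_T;\R^d)$ satisfying $u_t=\div\z$ in $\mathcal{D}'(Q_T)$ and $\z=u\,\psi(\nabla u/u)$ $\LL^{d+1}$-a.e.\ on $Q_T$. Since $\psi=\nabla\Phi$, at a.e.\ point $\z/u=\nabla\Phi(\nabla u/u)\in\partial\Phi(\nabla u/u)$, hence $\z/u\in\mathrm{dom}\,\Phi^*=\mathrm{dom}\,k$; by assertion (2) this is the Wulff shape $\overline{\mathrm{dom}\,k}$, whose support function is $\Phi_\infty$, so that the one-sided inequality $\z\cdot\nu/u\le\Phi_\infty(\nu)$ holds automatically. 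Under the standing hypothesis $u\in BV_{loc}(Q_T)$ I would then introduce the space-time field $U:=(u,-\z)$ on $Q_T\subset\R\times\R^d$, which is divergence free because $\div_{t,x}U=u_t-\div_x\z=0$.

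Second, I would invoke the theory of divergence-measure fields (Anzellotti pairings and normal traces, as recalled in the appendix) to obtain a Rankine--Hugoniot relation. Since $\div_{t,x}U\equiv 0$ places no mass on the jump hypersurface $J_u\subset Q_T$, the interior and exterior normal traces of $U$ coincide $\H^{d}$-a.e.\ there, i.e.\ $[U\cdot N]=0$ for $N$ the space-time unit normal to $J_u$. Writing the front locally as $\{x\cdot\nu=c(t)\}$, with spatial unit normal $\nu=\nu^{J_{u(t)}}$ oriented towards the larger trace $u^+$ and normal speed $c'(t)$, the space-time normal is proportional to $(-c'(t),\nu)$; substituting $U=(u,-\z)$ into $[U\cdot N]=0$ yields
\[
c'(t)\,(u^+-u^-)=-\,[\z\cdot\nu],\qquad [\z\cdot\nu]:=\z^+\!\cdot\nu-\z^-\!\cdot\nu,
\]
so that the normal speed is $-[\z\cdot\nu]/(u^+-u^-)$.

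Third --- and this is the heart of the matter --- I would identify the two normal traces of the flux. The mechanism is \emph{saturation}: across the jump the rescaled gradient $\nabla u/u$ blows up in the direction $\nu$, so $\z$ attains its recession value there. Concretely, for the convex $\Phi$ one has $\nabla\Phi(t\nu)\cdot\nu\to\Phi_\infty(\nu)$ as $t\to+\infty$, since the asymptotic slope of a convex function along a ray equals the support function of $\mathrm{dom}\,\Phi^*$ in that direction. Transferring this to traces, I expect $\z^\pm\!\cdot\nu=u^\pm\,\Phi_\infty(\nu)$, whence $[\z\cdot\nu]=(u^+-u^-)\,\Phi_\infty(\nu)$; inserting this into the relation above gives $c'(t)=-\Phi_\infty(\nu)$, i.e.\ front speed $\Phi_\infty(\nu)$. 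The sign simply records that the support advances towards the $u^-$ side, in agreement with assertion (3) (the set $t\,\overline{\mathrm{dom}\,k}$ grows with support function $t\,\Phi_\infty$), and since $\Phi_\infty$ is even the result is independent of the chosen orientation of $\nu$.

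The main obstacle is precisely this third step. The field $\z$ is only an $L^\infty$ object defined up to null sets, so its one-sided normal traces on $J_u$ cannot be read off pointwise but must be handled through the pairing measure $(\z,Du)$; the crux is to show that its jump part satisfies $(\z,Du)^{j}=\Phi_\infty(\nu^{J_{u(t)}})\,|D^{j}u|$, that is, that the flux attains the \emph{full} recession value rather than merely being dominated by it. I would extract this saturation from the entropy inequalities built into the definition of solution (the same constraint that pins down $\z=u\,\psi(\nabla u/u)$ on the regular part), combined with a Reshetnyak-type identification of the singular pairing in terms of the $1$-homogeneous $\Phi_\infty$; the upper bound $\z^\pm\!\cdot\nu\le u^\pm\Phi_\infty(\nu)$ comes for free from the Wulff-shape constraint of Step~1, so only the matching lower bound genuinely requires the saturation argument.
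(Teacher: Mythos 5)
Your proposal addresses only assertion (4) of the theorem, taking (2) as known and merely checking consistency with (3); but the statement you were asked to prove comprises all four items, and the paper spends most of its length on the other three: well-posedness (item 1) requires verifying the full hypothesis set of \cite{Andreu2005elliptic,Andreu2005parabolic}, which is Lemma \ref{lm:coherence} (the decay $D\psi=O(1/|r|)$ extracted from Poincar\'e's representation of $\Phi$, continuity and linear upper/lower bounds for the Lagrangian $F(z,\xi)=z^2\Phi(\xi/|z|)$, and the bound $\a(z,\xi)\cdot\eta\le h_\infty(z,\eta)$ via $\mathrm{Im}\,\nabla\Phi\subset\mathrm{dom}\,\Phi^*$, Lemma \ref{lm:folklore}); item 2 is Propositions \ref{prop:Phinorm} and \ref{pr:costasnorm}; item 3 is the supersolution construction $W=\beta\1_{K(t)}$, $K(t)=K\oplus t\,\overline{\mathrm{dom}\,k}$, of Proposition \ref{prop:super} together with the comparison principle (Corollary \ref{cr:sc}). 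None of this appears in your write-up, so as a proof of Theorem \ref{th:1} it is structurally incomplete even before one looks at the details.

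Within item 4 your Steps 1--2 do match the paper: the one-sided bound $|[\z\cdot\nu^{J_{u(t)}}]_\pm|\le\Phi_\infty(\nu^{J_{u(t)}})u^\pm$ from $\z/u\in\overline{\mathrm{dom}\,k}$ and $\Phi_\infty=\sigma_{\overline{\mathrm{dom}\,k}}$, and the Rankine--Hugoniot relation $\v\,[u]_{+-}=[[\z\cdot\nu^{J_{u(t)}}]]_{+-}$ (Lemma \ref{lem:conrh}). But Step 3 --- the saturation lower bound --- is where the proof actually lives, and you leave it as a plan whose sketched mechanism would not close the gap. The pointwise heuristic $\nabla\Phi(t\nu)\cdot\nu\to\Phi_\infty(\nu)$ (which is \eqref{eq:psi_infty}) says nothing about traces on $J_{u(t)}$, since $\nabla u/u$ is an $\LL^d$-a.e.\ object; and no argument using only the divergence-measure/pairing structure plus a Reshetnyak-type identification can work, because a generic weak (non-entropy) solution of $u_t=\div\z$ can carry a discontinuity with $[\z\cdot\nu]_\pm$ strictly below the saturation value while still satisfying your Step 2 --- saturation is precisely the selection enforced by the entropy inequalities, and the Reshetnyak route would only reproduce the upper bound you already have. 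The paper's missing ingredients are: first, Proposition \ref{pr:7.1}, which localizes the entropy inequalities on the jump set by computing the jump parts $h_S(u(t),DT(u(t)))^j=\Phi_\infty(\nu^{J_{u(t)}})[J_{TIS'}(u(t))]_{+-}\H^{d-1}_{J_{u(t)}}$ (this decomposition step is absent from your proposal); second, the quantitative truncation test in Proposition \ref{pr:8.1}: choosing $(S,T)\in\mathcal{TSUB}$ with $S(r)T(r)=(r-(u^+-\epsilon))^+$ turns \eqref{entcondsimple} into $\epsilon\bigl(\Phi_\infty(\nu^{J_{u(t)}})u^+-[\z\cdot\nu^{J_{u(t)}}]_+\bigr)\le\frac{\epsilon^2}{2}\bigl(\Phi_\infty(\nu^{J_{u(t)}})-\v\bigr)$, and letting $\epsilon\to0^+$, combined with your Step 1 upper bound, forces $[\z\cdot\nu^{J_{u(t)}}]_+=\Phi_\infty(\nu^{J_{u(t)}})u^+$ (and similarly for $u^-$), after which Rankine--Hugoniot yields $\v=\Phi_\infty(\nu^{J_{u(t)}})$. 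This $O(\epsilon)$-versus-$O(\epsilon^2)$ argument is the idea your proposal lacks; identifying the crux correctly is not the same as supplying it.
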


We can give more specific information for models generated by classical anisotropic norms. 

\begin{proposition}
\label{pr:1}
Let $A$ be a symmetric matrix whose associated quadratic form is positive-definite. Define $\|x\|_A:=\sqrt{x A x^T}$. Let $ g:\R_0^+\rightarrow \R_0^+$ be a $C^1(\R_0^+)$-function such that the decay condition \eqref{eq:newdecay} below holds. Then Theorem \ref{th:1} holds for the equation
\begin{equation}
\label{eq:anisotropic_template3}
\frac{\partial u}{\partial t} = \div \left( g\left(\|\nabla u/u\|_A^2/2\right) A \nabla u\right).
\end{equation}
 Moreover, we have that:
 \begin{enumerate}
 \item $\overline{\mbox{dom}\, k} = {\{x\in \R^d / \|x\|_{ A^{-1}}\le 1\}}$.
 \item Assume that the enhanced decay condition \eqref{eq:extralimit} below holds. Then, for any compactly supported initial datum $u_0$ satisfying the nondegeneracy condition \eqref{ass:F} and such that $\mbox{supp}\, u_0$ is either convex or has a boundary of class $C^2$, there holds that 
 $$\mbox{supp}\, u(t) = \mbox{supp}\, u_0 \oplus t\bar B_{A^{-1}}.$$
 \end{enumerate}
\end{proposition}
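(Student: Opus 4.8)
The plan is to first recognize that \eqref{eq:anisotropic_template3} is the template \eqref{eq:anisotropic_template0} for an explicit potential, deduce assertion~1 and most of assertion~2 from Theorem~\ref{th:1}, and handle the exact support identity by an affine change of variables that reduces the problem to the isotropic case.

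First I would identify the potential. Writing $r=\nabla u/u$ and $q(r):=\|r\|_A^2/2=\tfrac12\, rAr^T$, the map $q$ is a smooth positive-definite quadratic form with $\nabla q=Ar$ (using the symmetry of $A$). Letting $G$ be a primitive of $g$ and setting $\Phi(r):=G(q(r))$, a one-line computation gives $\psi:=\nabla\Phi=g(\|r\|_A^2/2)\,Ar$, so that $u\,\psi(\nabla u/u)=g(\|\nabla u/u\|_A^2/2)\,A\nabla u$ and \eqref{eq:anisotropic_template3} is exactly \eqref{eq:anisotropic_template0}. The task is then to check that this $\Phi$ meets Assumptions~\ref{ass:1}. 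Since $q$ is smooth and $G\in C^2$, regularity and the behaviour near the origin are immediate, and I would argue that the decay condition \eqref{eq:newdecay} on $g$ is precisely what enforces the convexity of $\Phi$ and the flux-saturation (recession) behaviour demanded by the Assumptions. With this verified, Theorem~\ref{th:1} applies verbatim, which already yields well-posedness.

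For assertion~1 of the Proposition I would compute the recession function directly. Using the asymptotics of $G$ forced by \eqref{eq:newdecay} (namely $G(s)\sim\sqrt{2s}$ after normalizing the saturation speed), the definition of $f_\infty$ gives
$$
\Phi_\infty(\xi)=\lim_{t\to 0^+} t\,G\!\left(\tfrac{\|\xi\|_A^2}{2t^2}\right)=\|\xi\|_A .
$$
It remains to identify the dual unit ball (Wulff shape) of the norm $\|\cdot\|_A$. The substitution $x=A^{-1/2}w$ turns $\|x\|_A$ into $|w|$, so the norm dual to $\|\cdot\|_A$ is $\|\cdot\|_{A^{-1}}$ and its unit ball is $\{y:\|y\|_{A^{-1}}\le 1\}=\bar B_{A^{-1}}$. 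By Theorem~\ref{th:1}(2) this equals $\overline{\mathrm{dom}\,k}$, giving assertion~1.

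For assertion~2, the inclusion $\mathrm{supp}\,u(t)\subset\mathrm{supp}\,u_0\oplus t\bar B_{A^{-1}}$ is immediate from Theorem~\ref{th:1}(3) combined with assertion~1; the content is the reverse inclusion. Here I would use the linear change of variables $y:=A^{-1/2}x$, $v(t,y):=u(t,A^{1/2}y)$. A chain-rule computation gives $\|\nabla_x u/u\|_A=|\nabla_y v/v|$ and $\div_x\!\big(g(\cdots)A\nabla_x u\big)=\div_y\!\big(g(|\nabla_y v/v|^2/2)\nabla_y v\big)$, so $v$ solves the isotropic equation (the case $A=I$) with datum $v_0(y)=u_0(A^{1/2}y)$; convexity or $C^2$-regularity of the support, the nondegeneracy condition \eqref{ass:F}, and the enhanced decay \eqref{eq:extralimit} are all preserved under this affine substitution. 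Invoking the isotropic version of the claim, equivalently constructing via the comparison principle of Section~\ref{sec:comp} a subsolution that advances at the full saturation speed in every direction, one gets $\mathrm{supp}\,v(t)=\mathrm{supp}\,v_0\oplus t\bar B(0,1)$. Mapping back by $x=A^{1/2}y$, using $A^{1/2}\bar B(0,1)=\bar B_{A^{-1}}$ and the commutation of the linear map with the Minkowski sum, yields $\mathrm{supp}\,u(t)=\mathrm{supp}\,u_0\oplus t\bar B_{A^{-1}}$. The principal obstacle is exactly this reverse inclusion: the bound from Theorem~\ref{th:1}(3) is soft, whereas the matching lower bound must rule out submaximal (waiting-time/flat-profile) fronts, which is where \eqref{ass:F} prevents the datum from being too flat near $\partial\,\mathrm{supp}\,u_0$ and \eqref{eq:extralimit} forces the flux to saturate fast enough for the subsolution to propagate at the maximal rate. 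A secondary technical point is verifying that \eqref{eq:newdecay} indeed yields convexity of $\Phi$ (since $G$ is concave while $q$ is convex, convexity of the composition is a genuine constraint, as already seen for the relativistic heat equation) together with the precise recession asymptotics used above.
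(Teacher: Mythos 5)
Your proposal is correct in outline and, for most of its length, runs parallel to the paper: the identification $\Phi(r)=G(\|r\|_A^2/2)$, $\psi=\nabla\Phi$ is exactly the paper's Lemma \ref{lm:example}; the recession computation $\Phi_\infty(\xi)=\|\xi\|_A$ is \eqref{eq:reccA}; and the identification of $\overline{\mathrm{dom}\,k}$ with the dual ball $\bar B_{A^{-1}}$ is carried out in the paper by a direct Legendre-transform computation in Example \ref{ex:matrix_cost} (evaluating $\Phi_A^*(Aw^T)$ and locating where it diverges), whereas you use the substitution $x=A^{-1/2}w$ together with Theorem \ref{th:1}(2) --- both are fine, and yours is slightly slicker. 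The ``secondary technical point'' you defer, convexity of $\Phi$ under \eqref{eq:newdecay}, is precisely the content of the paper's Sylvester-criterion computation: $D\psi=g'(\|r\|_A^2/2)\,(Ar^T)\otimes(Ar^T)+g(\|r\|_A^2/2)\,A$, whose principal minors \eqref{eq:kminor} are nonnegative exactly when $\frac{|g'|}{g}\frac{\|r\|_A^2}{2}\le\frac12$, i.e.\ the second condition in \eqref{eq:newdecay}; your instinct that this is a genuine constraint (concave $G$ composed with convex $q$) is right, and deferring it is acceptable since it is a short computation.

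The one place where your route genuinely diverges is the reverse inclusion in assertion 2, and here your primary branch has a gap. You propose to set $v(t,y):=u(t,A^{1/2}y)$ and ``invoke the isotropic version of the claim'' for $v$. The chain-rule computation is correct for smooth functions, but the isotropic support result is a statement about \emph{entropy} solutions, so you would additionally need that the entropy-solution framework (the Dal Maso functionals, the measures $h_S(u,DT(u))$, the inequalities \eqref{eineq}) is invariant under the linear change of variables --- plausible, but unproven here and in the paper, and a nontrivial amount of measure-theoretic bookkeeping. Your parenthetical fallback is the correct repair: apply the affine map only to the subsolution ansatz, where the verification reduces by Lemma \ref{lm:paso1} to a pointwise a.e.\ differential inequality inside the support, which does transform cleanly. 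Pulling back the isotropic subsolution in this way lands exactly on the paper's Proposition \ref{prop:sub}, namely $W(t,x)=e^{-at}\sqrt{R(t)^2-\|x\|_{A^{-1}}^2}\,\1_{\{R(t)\bar B_{A^{-1}}\}}$ with $R(t)=R_0+t$, whose subsolution property is checked directly using \eqref{eq:newdecay} and \eqref{eq:extralimit}; the equality of supports then follows as in Corollary \ref{cor:47}, placing scaled copies of $W$ below $u_0$ near $\partial\,\mathrm{supp}\,u_0$ via \eqref{ass:F} and comparing (Theorem \ref{theocomp}), with the upper bound from Corollary \ref{cr:sc}. One last small caution: the isotropic reference result in \cite{Calvo2015} is formulated with $g$ a function of the norm rather than the norm squared, so even in your reduction branch the hypotheses \eqref{eq:newdecay}--\eqref{eq:extralimit} must be translated between the two normalizations, as the paper notes in the remark following Proposition \ref{prop:sub}.
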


%%%%%%%%%
\section{A framework for anisotropic tempered diffusion equations}
\label{sec:frm}
The goal of this section is to introduce a convenient framework that allows to make sense of anisotropic generalizations of equations like \eqref{eq:rhe0}, so that the resulting framework will allow us to prove qualitative solutions' properties for such generalizations. Here we understand the anisotropy in the sense that the maximum propagation speed may depend on the spatial direction. 

As far as we know, previous works impose a number of assumptions on the flux function defining the equation (be it $\psi$ in \eqref{eq:anisotropic_template0} or $\a$ in \eqref{eq:a_form}) and then study the properties of the resulting equation, e.g. \cite{Andreu2005parabolic,Calvo2015,Calvo2015survey}. Here we shall change the focus from the structure of the flux to the structure of an associated potential. Thus, we shall set a functional framework for suitable potential functions and study the equations that are generated by those potentials. Having these ideas in mind we display the following framework for the generating potentials:
\begin{assumptions}
\label{ass:1}
Let $\Phi:\R^d \mapsto \R_0^+$ a $C^2$-function such that:
	\begin{enumerate}
	\item $\Phi \ge 0$ is convex and even.
	\item 
	\label{limit1}
	The recession function of $\Phi$, $\Phi_\infty$, verifies that $\mathbb{S}^{d-1} \subset \mbox{dom}\, \Phi_\infty$.
\item $\min_{\mathbb{S}^{d-1}} \Phi_\infty> 0$.
	\end{enumerate}
\end{assumptions}
\begin{remark}
A couple of useful facts follow easily from the former hypotheses: (i) since $\Phi_\infty$ is 1-homogeneous, we have that actually $\mbox{dom}\, \Phi_\infty =\R^d$.
(ii) condition \emph{\ref{limit1}} above is equivalent to ask that $\Phi$ be globally Lipschitz (\cite{Rockafellar}, Corollary 13.3.3), which rules out superlinear growth. Thus, $\sup_{\mathbb{S}^{d-1}} \Phi_\infty<+\infty$, 
\end{remark}
The function $\Phi$ plays the role of a potential for the direction field $\psi$ in \eqref{eq:anisotropic_template0}. Our next result makes this connection clear. Just before that, let us introduce some auxiliary objects that are needed to use the entropy solution theory as formulated in \cite{Andreu2005elliptic,Andreu2005parabolic} -see also the appendix.
\begin{definition}
\label{df:list}
Let $\Phi$ a potential function satisfying assumptions \ref{ass:1} above and let $\psi :=\nabla \Phi$ the associated vector field. We introduce:
\begin{itemize}
\item The associated flux, ${\bf a}(z,\xi):=z \psi(\xi/z)$ -compare \eqref{eq:anisotropic_template0} with \eqref{eq:a_form}. 
\item The associated Lagrangian, $F(z,\xi):=z^2 \Phi(\xi/|z|)$. This is such that $F(z,0)=0$ and $\partial_\xi F={\bf a}$ -its is implied here that we choose $\Phi(0)=0$; by prescribing this we are not losing any generality.
\item The recession function of the Lagrangian with respect to the $\xi$-variable, computed as $ F_\infty(z, \xi) := \lim_{t \to 0^+} t\, F \left(z, \xi/t
 \right).$ In our framework we actually have $F_\infty(z,\xi):=z |\xi|\Phi_\infty(\theta_\xi)$.

 \item The auxiliary function $h(z,\xi):=\a(z,\xi) \cdot \xi$, which in our framework reads as follows: $h(z,\xi)=z \psi(\xi/z) \cdot \xi$. We also define $ h_\infty(z, \xi) := \lim_{t \to 0^+} t\, h \left(z, \xi/t \right).$
 
\end{itemize}
\end{definition}
\begin{lemma}
\label{lm:coherence}
Let $\Phi$ a potential function satisfying assumptions \ref{ass:1} above and consider $\psi :=\nabla \Phi$ the associated vector field. Let $L>0$ -characteristic lengthscale. Then the equation
\begin{equation}
\label{eq:anisotropic_template}
\frac{\partial u}{\partial t} = \div \left(u\,  \psi \left(L\frac{\nabla u}{u}\right)\right)
\end{equation}
falls under the framework of \cite{Andreu2005elliptic,Andreu2005parabolic} and hence it is well posed in the class of entropy solutions.
\end{lemma}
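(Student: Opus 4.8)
The plan is to read the statement as a verification task: the theory of \cite{Andreu2005elliptic,Andreu2005parabolic} attaches well-posedness of \eqref{eq:a_form} to a fixed list of structural hypotheses on the Lagrangian and flux (the list recalled in the appendix), so it suffices to check that the objects $F$, $\a$, $h$ built from $\Phi$ in Definition \ref{df:list} meet every item on that list; well-posedness then follows by quoting the existence--uniqueness theorems of those references, with no new analysis required. Since the unknown $u$ is a density we only need $z=u\ge 0$.

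First I would remove the length-scale $L$. Setting $\tilde\Phi(\eta):=L^{-1}\Phi(L\eta)$ one has $\nabla\tilde\Phi(\eta)=\nabla\Phi(L\eta)=\psi(L\eta)$, so \eqref{eq:anisotropic_template} is exactly the template \eqref{eq:anisotropic_template0} generated by $\tilde\Phi$. As $\tilde\Phi$ is again $C^2$, convex, even, globally Lipschitz and has the same recession function $\Phi_\infty$ (hence the same strictly positive minimum on $\S^{d-1}$), it satisfies Assumptions \ref{ass:1}, and we may take $L=1$ and work directly with $F(z,\xi)=z^2\Phi(\xi/|z|)$ and $\a(z,\xi)=z\psi(\xi/z)$.

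The bulk of the proof is then the verification, which I would organize by the item of Assumptions \ref{ass:1} it uses. Item 1 supplies the qualitative structure: $\Phi\in C^2$ makes $\a=\partial_\xi F$ continuous and $F(z,\cdot)$ of class $C^1$ with $F(z,0)=0$; convexity of $\Phi$ gives convexity of $F(z,\cdot)$ and hence the monotonicity $(\a(z,\xi)-\a(z,\eta))\cdot(\xi-\eta)\ge 0$ underlying uniqueness, as well as $h(z,\xi)=z^2\,\nabla\Phi(\xi/z)\cdot(\xi/z)\ge F(z,\xi)\ge 0$; evenness fixes the correct symmetry of the flux. Item 2 ($\Phi$ globally Lipschitz) yields flux saturation, $|\a(z,\xi)|=|z|\,|\nabla\Phi(\xi/z)|\le |z|\,\mathrm{Lip}(\Phi)$, together with the upper linear bound $F(z,\xi)\le |z|\,|\xi|\,\Phi_\infty(\theta_\xi)$ coming from $\Phi(\eta)\le\Phi_\infty(\theta_\eta)|\eta|$ for convex $\Phi$ with $\Phi(0)=0$. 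Item 3 provides the matching lower bound $F(z,\xi)\ge (\min_{\S^{d-1}}\Phi_\infty)\,|z|\,|\xi|-C z^2$, obtained from the monotonicity of the quotients $R\mapsto\Phi(R\theta)/R$; this is exactly what places the generating functional in the linear-growth class. I would also record the limit $z\to 0^+$, where $F(z,\xi)\sim |z||\xi|\Phi_\infty(\theta_\xi)\to 0$ and $|\a(z,\xi)|\le |z|\,\mathrm{Lip}(\Phi)\to 0$, so that $F$ and $\a$ extend continuously with $F(0,\cdot)=0$ and $\a(0,\cdot)=0$.

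The step I expect to be the genuine obstacle is the recession/compatibility condition $h_\infty(z,\xi)=F_\infty(z,\xi)=z|\xi|\Phi_\infty(\theta_\xi)$, which is what gives meaning to the Anzellotti pairing and to the Green's formula on which the definition of entropy solution (and later the flux through jump surfaces) rests. For $z>0$ a direct computation gives $t\,h(z,\xi/t)=z\,|\xi|\,\nabla\Phi(\xi/(tz))\cdot\theta_\xi$, so the identity reduces to the convex-analytic fact that the directional slopes of a convex globally Lipschitz function converge to its recession value, $\nabla\Phi(R\theta)\cdot\theta\to\Phi_\infty(\theta)$ as $R\to\infty$ (monotone difference quotients, cf.\ \cite{Rockafellar}). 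Because $\Phi_\infty$ is finite and continuous on the compact sphere, this convergence is uniform by Dini's theorem, which makes the growth bounds and the limit uniform in $\theta$ and simultaneously confirms the formula for $F_\infty$ asserted in Definition \ref{df:list}. With all items checked, the theorems of \cite{Andreu2005elliptic,Andreu2005parabolic} yield existence and uniqueness of entropy solutions, and the lemma follows.
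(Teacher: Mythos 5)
Your overall strategy --- verify the hypothesis list of \cite{Andreu2005elliptic,Andreu2005parabolic} for the objects of Definition \ref{df:list} and then quote the existence--uniqueness theorems --- is exactly the paper's, and several of your checks are correct and match Step 3 of the paper's proof: the rescaling that removes $L$, the upper and lower linear bounds on $F$ via monotone difference quotients (with Dini giving uniformity in the direction), the continuous extension of $F$ and $\a$ across $z=0$, monotonicity in $\xi$, and $0\le F\le h$. However, you omit precisely the item on which the paper spends most of its effort. The framework requires continuity of $\partial\a/\partial\xi_i$ on all of $\R\times\R^d$, and since $\partial_{\xi_i}\a_j(z,\xi)=\partial_i\psi_j(\xi/z)$, extending this across $z=0$ forces the entries of $D\psi(r)=D^2\Phi(r)$ to decay as $|r|\to\infty$. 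This does \emph{not} follow from $\Phi\in C^2$, convex, even and globally Lipschitz: already in one variable a smooth bounded nondecreasing $\Phi'$ can have $\Phi''$ exhibiting unit-height bumps arbitrarily far out. The paper's entire Step 2 is devoted to this point: it represents $\Phi$ through Poincar\'e's lemma, exploits the monotonicity of $t\mapsto r\cdot\psi(tr)$ to obtain $\theta_r\cdot\psi(r)\to\Phi_\infty(\theta_r)$ in \eqref{eq:psi_infty}, and then differentiates the representation to extract the decay $r\,D\psi\,\theta_r^T=O(1)$, which is what feeds the continuity argument of Lemma 4.1 of \cite{Calvo2015}. Your proposal never mentions second derivatives of $\Phi$, so this required hypothesis is left unverified, and it is the one check that cannot be absorbed into the soft convex-analytic structure you rely on.

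A second, smaller gap: you single out as ``the genuine obstacle'' the diagonal identity $h_\infty(z,\xi)=F_\infty(z,\xi)$ and prove it via convergence of the slopes $\nabla\Phi(R\theta)\cdot\theta\to\Phi_\infty(\theta)$, which is fine as far as it goes; but the condition the entropy framework actually needs (the paper's Step 3, item 4) is the \emph{off-diagonal} inequality $\a(z,\xi)\cdot\eta\le h_\infty(z,\eta)$ for \emph{all} pairs $\xi,\eta$, i.e.\ $\psi(r)\cdot\theta_\eta\le\Phi_\infty(\theta_\eta)$ for every $r$. Convergence along each ray does not give this; the paper obtains it from ${\rm Im}\,\nabla\Phi\subset{\rm dom}\,\Phi^*$ (Lemma \ref{lm:folklore}) combined with $\Phi_\infty=\sigma_{{\rm dom}\,\Phi^*}$. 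With your own tools the fix is one line --- convexity gives $\nabla\Phi(x)\cdot\eta\le\left(\Phi(x+t\eta)-\Phi(x)\right)/t\to\Phi_\infty(\eta)$ as $t\to\infty$ --- but as written you verify a strictly weaker statement. Finally, you leave unaddressed the Lipschitz bound on $|(\a(z,\xi)-\a(\hat z,\xi))\cdot(\xi-\hat\xi)|$ (the paper's item 5, via Lemma 4.4 of \cite{Calvo2015}), which enters the Kruzhkov doubling-variables uniqueness proof and is not implied by monotonicity in $\xi$ alone.
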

\begin{proof} 

{\em Step 1: preliminary observations.}
We are to show that (\ref{eq:anisotropic_template}) satisfies the hypotheses in \cite{Andreu2005elliptic,Andreu2005parabolic} and hence the well-posedness theory developed there will apply, thus granting the desired statement. Since our equation template resembles closely that in \cite{Calvo2015}, part of the arguments there can be reused for the present purpose. To that aim, we start by noting that $\psi \in C^1(\R^d;\R^d)$. This vector field is conservative and monotone ($D\psi \ge 0$) by construction. Moreover, being $\Phi$ even we can ensure that $\psi(0)=0$; we also have that $\psi(-r)=-\psi(r)$ since $\Phi(-r)=\Phi(r)$.

{\em Step 2: decay of the derivatives.} Let us write $\psi_i$, $i=1,\ldots,d$ for the components of $\psi$.  
Next we show that the map $r\mapsto \max_{i,j}|\partial_j \psi_i(r)|$ is an $O(1/|r|)$ for $|r|\gg 1$. Since $\psi$ is conservative we can represent $\Phi$ using Poincare's lemma, that is,
$$
\Phi(r)= \Phi(0)+ \int_0^1 \psi(tr) \cdot r \, dt,\quad \forall r \in \R^d.
$$
Thus
$$
\lim_{|r|\to \infty} \int_0^1 \psi(tr) \cdot \frac{r}{|r|} \, dt = \lim_{|r|\to \infty} \Phi(r)/|r| = \Phi_\infty(\theta_r)<\infty\:.
$$
Recall that we abridge $\theta_r := r/|r|$ for the direction of the ray determined by $r$. Now we note that, thanks to the monotonicity of $\psi$ (or equivalently $\frac{d}{dt}[r\cdot \psi(tr)]=r\, D\psi_{|(tr)} \, r^T\ge 0$) we have that $[0,\infty) \ni t\mapsto r\cdot \psi(tr)$ is a non-decreasing function that assumes the value zero at $t=0$ and attains strictly positive values for large $t$ -since $\Phi$ is not constant along any ray. Therefore the map $t\mapsto r\cdot \psi(tr)$ is always non-negative and the following limit exists: $lim_{t\to \infty}r\cdot \psi(tr) \in \R^+ \cup \{+\infty\}$. We now write
$$
\Phi_\infty(\theta_r) = \lim_{|r|\to \infty} \int_0^1 \theta_r \cdot \psi(tr) \, dt = \lim_{|r|\to \infty} \frac{1}{|r|} \int_0^{|r|} \theta_r \cdot \psi(z \theta_r)\, dz >0
$$
which implies that $\lim_{|r|\to \infty} |\psi(r)|$ is finite. {Even more, using the monotonicity of $z\mapsto \theta_r \cdot \psi(z \theta_r)$ we may sharpen the previous to yield}
\begin{equation}
\label{eq:psi_infty}
\lim_{r \to \infty} \theta_r \cdot \psi(r) = \Phi_\infty(\theta_r) \quad \mbox{and hence}\quad \lim_{r \to \infty} \frac{r \cdot \psi(r)}{|r| \Phi_\infty(\theta_r)} = 1. 
\end{equation}
We are now ready to analyze the derivatives of $\psi$: note that
$$
\psi(r) =\nabla \Phi(r)= \int_0^1 \psi(tr) + t\, r D\psi_{|(tr)}\, dt,
$$
which implies that
$$
\lim_{r\to \infty} \psi(r) \cdot \theta_r^T = \lim_{r\to \infty} \int_0^1 \psi(tr)\cdot \theta_r^T + t\, r D\psi_{|(tr)} \theta_r^T\, dt\:.
$$
We have just shown that the limit on the left hand side exists and is finite. We also know that both terms on the right hand side are non-negative; moreover, the integral of the first summand converges by the same token. Therefore,
$$
\lim_{r\to \infty} \int_0^1  t\, r D\psi_{|(tr)}\cdot \theta_r^T\, dt = \lim_{r\to \infty} \frac{1}{|r|} \int_0^{|r|} z \theta_r D\psi_{|(z \theta_r)} \theta_r^T\, dt<\infty,
$$
which finally implies that $r D\psi \theta_r^T=O(1)$ for large $|r|$ and thus the decay rate for $D\psi$ is obtained.

{\em Step 3: Fulfillment of the assumption list in  \cite{Andreu2005elliptic,Andreu2005parabolic}.}
All in all, we have shown that $\psi$ is a $C^1$ vector field which is conservative, monotone and odd. Moreover it vanishes at the origin and its derivatives decay at infinity. These properties enable us to argue like in Section 4 of \cite{Calvo2015} to justify that our equation satisfies the assumption set in \cite{Andreu2005elliptic,Andreu2005parabolic}. For that aim, we address the following points:
\begin{enumerate}

\item The continuity of $\frac{\partial \a}{\partial \xi_i}$ for each $i=1,\ldots, d$ is dealt with as in Lemma 4.1 of \cite{Calvo2015}. Here we use the decay of the derivatives that was proved in the previous step.

\item {\em (Continuity of the Lagrangian)} The developments in Step {\em 2} imply that
\begin{equation}
\label{eq:Phiexpand}
\Phi(r)= \Phi_\infty(\theta_r) |r| + o(|r|) \quad \mbox{for} \quad |r|\gg 1.
\end{equation}
 This entails that $F(z,\xi)$ can be made continuous at $z=0$ and thus $F\in C(\R \times \R^d)$.
 
 \item {\em (Bounds on the Lagrangian)} The upper bound $F(z,\xi) \le C(1 + |\xi|)$ follows easily as in \cite{Calvo2015}, Section 4. To obtain a lower bound we argue as follows. Thanks to \eqref{eq:Phiexpand} we can find some $\tilde r>0$ such that $\Phi(r) \ge |r| \min_{\mathbb{S}^{d-1}} \Phi_\infty/2$ for every $|r| >\tilde r$. Thus we have that 
$$
\Phi(r) \ge \frac{|r|}{2} \min_{\mathbb{S}^{d-1}} \Phi_\infty - \frac{|\tilde r|}{2} \min_{\mathbb{S}^{d-1}} \Phi_\infty \quad \forall r\in \R^d
$$
and finally
$$
F(z,\xi) \ge \frac{|z|}{2} |\xi| \min_{\mathbb{S}^{d-1}} \Phi_\infty - z^2 \frac{|\tilde r|}{2} \min_{\mathbb{S}^{d-1}} \Phi_\infty \quad \forall z\in \R,\,  \xi \in \R^d.
$$

\item {\em (Properties of $h$)} All the required properties follow easily but the lower bound on $h_\infty$. We need to ensure that 
$$
 \a(z,\xi) \cdot \eta \le h_\infty(z,\eta) \quad \mbox{for every}\quad \xi, \eta \in \R^d, \, \mbox{and}\, z \in \R.
$$
In our framework this is equivalent to
$$
z \psi(\xi/|z|)\cdot \eta \le z |\eta|\Phi_\infty(\theta_\eta)
$$
and it would be enough to show that, for every $\xi, \eta \in \R^d$ and $z \in \R$, we have
$$
 \psi(\xi/|z|)\cdot \theta_\eta \le \Phi_\infty(\theta_\eta)\:.
$$
Given that $\Phi_\infty = \sigma_{{\rm dom}\, \Phi^*}$, this would be implied by ${\rm Im}\, \psi ={\rm Im}\, \nabla \Phi \subset \mbox{dom}\, \Phi^*$. This last property readily follows from the convexity of $\Phi$, as Lemma \ref{lm:folklore} below shows.

\item {\em (Lipschitz bound for $|(\a(z,\xi)-\a(\hat z,\xi)) (\xi-\hat \xi)|$)} This is obtained as in Lemma 4.4 of \cite{Calvo2015}.

\end{enumerate} 
The above guidelines are enough to argue like in Section 4 of \cite{Calvo2015}, thus justifying that our equation satisfies the assumption set in \cite{Andreu2005elliptic,Andreu2005parabolic}, which finally grants us the well-posedness of the model in the class of {\em entropy solutions}.
\end{proof}
We have used the following particular version of a well-kown fact in convex analysis.
\begin{lemma}
\label{lm:folklore} Let $\Phi$ be as in Assumptions \ref{ass:1}. Then the subdifferential of $\Phi$ is contained in the domain of $\Phi^*$.
\end{lemma}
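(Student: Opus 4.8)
The plan is to derive the inclusion directly from the Fenchel--Young relation between $\Phi$ and its conjugate $\Phi^*$, which is the natural tool here since $\mathrm{dom}\,\Phi^*$ is exactly the set of slopes for which the sup defining $\Phi^*$ is attained at a finite value. Concretely, I would reduce the statement to showing that for every $x\in\R^d$ and every subgradient $y\in\partial\Phi(x)$ one has $\Phi^*(y)<+\infty$, which is precisely the assertion $y\in\mathrm{dom}\,\Phi^*$. The key structural input is that $\Phi$, being a $C^2$ function $\R^d\to\R_0^+$ by Assumptions \ref{ass:1}, is \emph{finite-valued everywhere}; this is what makes the argument go through without any integrability or growth caveat.

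The core step is a one-line rearrangement. Fix $x\in\R^d$ and $y\in\partial\Phi(x)$. By the very definition of the subdifferential,
$$
\Phi(z)\ge \Phi(x)+ y\cdot(z-x)\qquad \forall\, z\in\R^d,
$$
which I would rewrite as $y\cdot z-\Phi(z)\le y\cdot x-\Phi(x)$ for all $z$. Taking the supremum over $z\in\R^d$ on the left-hand side yields, using the paper's definition of the Legendre--Fenchel transform,
$$
\Phi^*(y)=\sup_{z\in\R^d}\{y\cdot z-\Phi(z)\}\le y\cdot x-\Phi(x)<+\infty,
$$
where finiteness is guaranteed because both $\Phi(x)$ and $y\cdot x$ are finite. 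Hence $y\in\mathrm{dom}\,\Phi^*$, proving $\partial\Phi\subset\mathrm{dom}\,\Phi^*$. Equivalently, one could invoke the Fenchel--Young equality $y\in\partial\Phi(x)\iff \Phi(x)+\Phi^*(y)=x\cdot y$ (Rockafellar, Thm.\ 23.5) and read off finiteness of $\Phi^*(y)$ from finiteness of $\Phi(x)$ and of $x\cdot y$.

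Finally I would record the consequence actually used in Lemma \ref{lm:coherence}: since $\Phi$ is $C^2$, the subdifferential is the singleton $\partial\Phi(x)=\{\nabla\Phi(x)\}=\{\psi(x)\}$, so the inclusion becomes $\mathrm{Im}\,\psi=\mathrm{Im}\,\nabla\Phi\subset\mathrm{dom}\,\Phi^*$, which combined with $\Phi_\infty=\sigma_{\mathrm{dom}\,\Phi^*}$ delivers the bound $\psi(\xi/|z|)\cdot\theta_\eta\le\Phi_\infty(\theta_\eta)$ needed for the lower bound on $h_\infty$. I do not anticipate a genuine obstacle here: this is a folklore fact in convex analysis, and the only point requiring any attention is the finiteness of $\Phi^*(y)$, which is automatic precisely because Assumptions \ref{ass:1} force $\Phi$ to be everywhere finite (indeed globally Lipschitz, by the remark following the assumptions).
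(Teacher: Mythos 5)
Your proof is correct and is essentially the paper's own argument: both rearrange the subgradient (in the paper, gradient) inequality $\Phi(z)\ge\Phi(x)+y\cdot(z-x)$ to bound $\Phi^*(y)\le y\cdot x-\Phi(x)<+\infty$, so $y\in\mathrm{dom}\,\Phi^*$. Stating it for a general subgradient before specializing to $\partial\Phi(x)=\{\nabla\Phi(x)\}$ is a cosmetic, not substantive, difference.
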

\begin{proof}
Since $\Phi \in C^2(\R^d)$ is convex we may fix $x_0 \in \R^d$ and write
$$
\Phi(x)-\Phi(x_0) \ge \nabla \Phi(x_0)\cdot (x-x_0)\quad \forall x \in \R^d,
$$
that is,  
$$
 \nabla \Phi(x_0)\cdot x-\Phi(x) \le x_0\cdot \nabla \Phi(x_0) - \Phi(x_0)<\infty \quad \forall x \in \R^d.
$$ 
Therefore $\nabla \Phi(x_0)$ belongs to $\mbox{dom}\, \Phi^*$. 
\end{proof}

%%%%%%%%%%%
\subsection{Examples}
Our next aim is to show some examples of genuine anisotropic tempered diffusion equations; those will be described thanks to the previous framework. For that we need some preliminary results.
\begin{definition}
Let $A$ 
be a $d\times d$ positive definite symmetric matrix.
For any $w,z\in\mathbb R^d$ we set $\langle w,z\rangle_A%=\langle Aw,z\rangle
=z A w^T$ and $\|z\|_{A}:=\sqrt{\langle z,z\rangle_A}$.
\end{definition}
\begin{lemma}
\label{lm:example}
Let $A$ be a symmetric matrix whose associated quadratic form is positive-definite. Let $ g:\R_0^+\rightarrow \R_0^+$ be a $C^1(\R_0^+)$-function such that
\begin{equation}
\label{eq:newdecay}
\lim_{z\to \infty}\sqrt{z}\, g(z)=1/\sqrt{2} \quad \mbox{and} \quad |z  g'(z)/ g(z)|\le 1/2, \quad z\in [0,\infty).
\end{equation}
Then the vector field
$\psi: \R^d \rightarrow \R^d$ given by
\begin{equation}
\label{eq:Apsi}
\psi(r)= g(\|r\|_A^2/2) A r^T
\end{equation}
is conservative and its associated potential $\Phi$ satisfies Assumptions \ref{ass:1}.
\end{lemma}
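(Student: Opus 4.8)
The plan is to exhibit the potential explicitly, which settles conservativeness at once, and then to read off each clause of Assumptions \ref{ass:1} from the two halves of the decay condition \eqref{eq:newdecay}. Set $q(r):=\|r\|_A^2/2=\tfrac12\,rAr^T$, so that $q\in C^\infty$ and $\nabla q = rA$. Define $G(s):=\int_0^s g(\tau)\,d\tau$ and put $\Phi(r):=G(q(r))$. Since $g\in C^1(\R_0^+)$ we have $G\in C^2$, hence $\Phi\in C^2(\R^d)$, and the chain rule gives $\nabla\Phi(r)=G'(q)\,\nabla q = g(q(r))\,rA$, which is exactly $\psi(r)$; thus $\psi=\nabla\Phi$ is conservative with associated potential $\Phi$. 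Nonnegativity is immediate because $g\ge 0$ forces $G\ge 0$, and evenness follows from $q(-r)=q(r)$.

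The one genuinely delicate point is convexity, and this is where the second half of \eqref{eq:newdecay} enters. Differentiating once more gives the Hessian $D^2\Phi = g'(q)\,(rA)^T(rA)+g(q)\,A$, so that for any $v\in\R^d$ its quadratic form reads $v\,D^2\Phi\,v^T = g'(q)\,\langle v,r\rangle_A^2 + g(q)\,\|v\|_A^2$. When $g'(q)\ge 0$ both terms are nonnegative and nothing is needed. When $g'(q)<0$ I would combine the bound $|g'(q)|\le g(q)/(2q)$, which is the rearrangement of $|zg'(z)/g(z)|\le 1/2$, with the Cauchy--Schwarz inequality $\langle v,r\rangle_A^2\le \|v\|_A^2\,\|r\|_A^2 = 2q\,\|v\|_A^2$ for the inner product $\langle\cdot,\cdot\rangle_A$, to get $g'(q)\,\langle v,r\rangle_A^2 \ge -\tfrac{g(q)}{2q}\cdot 2q\,\|v\|_A^2 = -g(q)\,\|v\|_A^2$. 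The two contributions cancel, yielding $v\,D^2\Phi\,v^T\ge 0$, so $\Phi$ is convex (at $r=0$ the rank-one term vanishes and positivity reduces to $g(0)\,A\ge 0$). I expect this interplay between the logarithmic-derivative bound and Cauchy--Schwarz to be the main obstacle.

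It remains to identify the recession function and thereby verify condition \ref{limit1} together with the positive-minimum condition. The first limit in \eqref{eq:newdecay} forces $g(s)\sim (2s)^{-1/2}$ as $s\to\infty$, whence $G(s)/\sqrt{s}\to\sqrt 2$ (e.g.\ by L'H\^opital). Computing the recession function along $\theta\in\mathbb{S}^{d-1}$ via the substitution $s=\|\theta\|_A^2/(2t^2)$,
$$
\Phi_\infty(\theta)=\lim_{t\to0^+} t\,G\!\left(\frac{\|\theta\|_A^2}{2t^2}\right) = \|\theta\|_A\,\lim_{s\to\infty}\frac{G(s)}{\sqrt{2s}} = \|\theta\|_A .
$$
In particular $\Phi_\infty$ is finite on $\mathbb{S}^{d-1}$, so $\mathbb{S}^{d-1}\subset\mbox{dom}\,\Phi_\infty$; and since $A$ is positive definite, $\min_{\mathbb{S}^{d-1}}\Phi_\infty = \min_{\theta}\|\theta\|_A\ge\sqrt{\lambda_{\min}(A)}>0$. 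This checks every item of Assumptions \ref{ass:1} and completes the argument.
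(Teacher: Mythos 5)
Your proof is correct, and it coincides with the paper's on everything except the one step you yourself flagged as delicate: convexity. There you take a genuinely different route. The paper reduces (without loss of generality) to a diagonal $A$ and applies Sylvester's criterion, computing each principal $k$-minor of $D\psi$ via the matrix determinant lemma $\det(B+u\otimes v)=\det B\,(1+vB^{-1}u^T)$ and observing that the $k=d$ condition, namely $\tfrac{|g'|}{g}\tfrac{\|r\|_A^2}{2}\le\tfrac12$, is the binding one — which is exactly \eqref{eq:newdecay}. You instead bound the Hessian's quadratic form directly: writing $v\,D^2\Phi\,v^T=g'(q)\langle v,r\rangle_A^2+g(q)\|v\|_A^2$ and combining the rearranged hypothesis $|g'(q)|\le g(q)/(2q)$ with Cauchy--Schwarz for the $A$-inner product, $\langle v,r\rangle_A^2\le 2q\,\|v\|_A^2$. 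Your version is arguably cleaner: it needs no diagonalization, and it sidesteps a subtlety in the minors route — nonnegativity of \emph{leading} principal minors does not in general characterize positive \emph{semi}definiteness (that requires all principal minors), so the determinant argument needs the extra observation that the bound holds uniformly over index subsets, whereas your estimate gives $D^2\Phi\ge 0$ at one stroke and makes transparent that Cauchy--Schwarz is sharp precisely when $v$ is $A$-parallel to $r$, i.e.\ the radial direction is the critical one. The remaining items — the explicit potential $\Phi=G(\|\cdot\|_A^2/2)$, evenness, nonnegativity, and the recession computation $\Phi_\infty(\theta)=\|\theta\|_A$ from $\sqrt{z}\,g(z)\to 1/\sqrt 2$ (you via L'H\^opital, the paper via an asymptotic comparison of integrals) — match the paper's proof in substance. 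One small point worth making explicit in both your argument and the paper's: the hypothesis $|zg'(z)/g(z)|\le 1/2$ tacitly presumes $g>0$ on $(0,\infty)$, which is what licenses the division in $|g'(q)|\le g(q)/(2q)$.
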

\begin{proof}
 Let $G:\R_0^+ \rightarrow \R_0^+$ be the primitive of $g$ that vanishes at zero. We let 
$$
\Phi:\R^d \rightarrow \R_0^+,\quad \Phi(r):=  G (\|r\|_A^2/2).
$$
We readily check that $\nabla \Phi = \psi$, where we use the fact that $A$ is symmetric. Let us see that the potential $\Phi$ constructed in this way satisfies Assumptions \ref{ass:1}. It is an even function by construction. We can compute the recession function as per
$$
\frac{\Phi(r)}{|r|} =\frac{1}{|r|}\int_0^{\|r\|_A^2/2}  g(\xi)\, d\xi \sim \frac{1}{|r|}\int_1^{\|r\|_A^2/2}  \frac{d\xi}{\sqrt{2\xi}} \sim \frac{1}{|r|} \sqrt{\|r\|_A^2}\quad \mbox{for}\, |r|\gg 1.
$$
Thus, letting $\theta_r:=r/|r|$, we have  
\begin{equation}
\label{eq:reccA}
\Phi_\infty(\theta_r)= \|\theta_r\|_A.
\end{equation}

Next we compute the Jacobian matrix
$$
D\psi = g'(\|r\|_A^2/2) (Ar^T) \otimes (Ar^T) + g(\|r\|_A^2/2)A
$$
and hence, given that $A$ is symmetric we deduce that $D\psi$ also is. Since $g\in C^1(\R_0^+)$ we have $\Phi \in C^2(\R_0^+)$.
 It remains to prove the convexity of $\Phi$, which is equivalent to the monotonicity pro\-per\-ty $D\psi \ge 0$. As $A$ defines a quadratic form which is positive definite, there is no loss of generality in assuming that $A$ is a diagonal matrix with strictly positive diagonal elements. We may use Sylvester's criterion as in \cite{Calvo2015}. For $i,j \in \{1,\ldots,d\}$ we have  
 $$
\partial_j \psi^{(i)} =  g'(\|r\|_A^2/2) a_{ii}r_i a_{jj}r_j + g(\|r\|_A^2/2) \delta_{ij} a_{ij}.
$$
To computer the principal k-minors we use the following matrix identity (known as ``Sylvester's trick'') 
$$
\det (B+u\otimes v)= \det B \, (1+v B^{-1} u^T)\:;
$$
 e.g., to compute $\det(D\psi)$ we can take $B=  g(\|r\|_A^2/2) A,\, u_i=a_{ii}r_i,\, v_i=  g'(\|r\|_A^2/2) u_i$. For a $d\times d$ matrix $A$ we denote by $A_{k\times k}$ the $k\times k$ matrix $\tilde A$ given by $\tilde a_{ij}=a_{ij}$, $i,j\in \{1,\ldots,k\}$. Then the principal k-th minor reads
\begin{equation}
\label{eq:kminor}
\det[(D\psi)_{k\times k}] =  g(\|r\|_A^2/2)^k \det[(A)_{k\times k}] \left(1 +\frac{ g'(\|r\|_A^2/2)}{ g(\|r\|_A^2/2)} \sum_{i=1}^k a_{ii} (r_i)^2\right).
\end{equation}
We produce a set of $d$ conditions by imposing \eqref{eq:kminor} to be non-negative for each $k=1,\ldots,d$. Clearly the most demanding condition is that for $k=d$, which reads
$$
\frac{| g'(\|r\|_A^2/2)|}{ g(\|r\|_A^2/2)}\frac{\|r\|_A^2}{2} \le \frac{1}{2}.
$$
This is satisfied thanks to \eqref{eq:newdecay}. In this way we ensure that $D\psi \ge 0$ and we conclude the proof.
\end{proof}
Now we can present specific examples. Following \cite{Olson2000} we may consider
\begin{equation}
\label{eq:pmodel}
g(z)= \frac{1}{(1+(2z)^{p/2})^{1/p}},\quad p\ge 2.
\end{equation}
This includes \eqref{eq:rhe0} for $p=2$ and Wilson's model for $p=1$ (e.g. \cite{Mihalas1984}, see also Example \ref{ex:Wilson} below). Note that $g$ is not a $C^1$ function at the origin when $1\le p<2$. Nevertheles, it can be shown that $\Phi \in C^2$ for any $1\le p<2$ and hence the thesis of Lemma \ref{lm:example} is also true for this range of $p$. In the isotropic case we would construct a flux as $\psi(r)=r g(|r|^2/2)$ -see Remark \ref{rm:isog} below. Now we obtain anisotropic variants of those models by replacing that flux by the one given in \eqref{eq:Apsi} for a suitable matrix $A$. In this way it makes perfect sense to consider equations as
$$
\frac{\partial u}{\partial t} = \div \left[ \frac{u\, A \nabla u}{\sqrt{u^2 + (\nabla u)^TA\nabla u}} \right] \quad \mbox{in}\, Q_T
$$
or more generally
$$
\frac{\partial u}{\partial t} = \div \left[ \frac{u\, A \nabla u}{\left(u^p + [(\nabla u)^TA\nabla u]^{p/2}\right)^{1/p}} \right] \quad \mbox{in}\, Q_T.
$$

\begin{remark}
\label{rm:isog}
The isotropic framework described in \cite{Calvo2015} is embodied here -although notations differ slightly. Starting from the class of vector fields $\psi$ described in \cite{Calvo2015} we can construct the associated potential $\Phi$ using Poincare's lemma as in \cite{Calvo2015}; this potential is easily seen to satisfy Assumptions \ref{ass:1}. Actually we have $\Phi_\infty(r)=s|r|$ -where $s$ is the sound speed according to the notations in \cite{Calvo2015}- regardless of the direction we approach infinity.
\end{remark}

%%%%%%%%%%
\section{Connections with optimal transport theory}
\label{sec:optr}
In order to consolidate our framework we state and prove some results in the ambient of optimal transport theory. We relate the shape of the level sets of the potential to the maximum propagation speed allowed along each direction, thus making sense of anisotropic spreading at finite rates.

Recall that the Jordan--Kinderlerher-Otto (JKO for short) minimization scheme \cite{Jordan1998} with cost $k$ and entropy $F$ leads to evolution equations of the form
$$
u_t = \div\, (u \nabla k^* (\nabla F'(u))).
$$
Choosing the entropy $F(r)=L (r\log r - r)$ we arrive to the following structure:
\begin{equation}
\label{templatetr2}
u_t = \div\, (u \nabla k^* (L \nabla u/u)). 
\end{equation}
We wonder whether it is feasible to formulate equations of the form \eqref{eq:anisotropic_template} in this fashion and what would the properties of the cost function $k$ be. Comparing \eqref{eq:anisotropic_template} with \eqref{templatetr2} we readily see that $k^*(r) =  \Phi (r)$ for every $r \in \R^d$. Actually we can deepen into this observation as the following result shows.
\begin{proposition}
\label{prop:Phinorm}
Let $\Phi$ satisfy Assumptions \ref{ass:1} and define an associated cost function via $k=\Phi^*$. Then the following assertions hold true:
\begin{enumerate}
\item $\Phi_\infty$ is the support function of $\mathrm{dom}\, k$ (i.e. $\Phi_\infty=\sigma_{\mathrm{dom}\, k}$).
\item $\mathrm{dom}\, k$ is a convex and bounded set that is symmetric with respect to the origin. 
\item $\mathrm{dom}\, k$ has nonempty interior and $\Phi_\infty$ defines a norm on $\mathbb R^d$.
\end{enumerate}
\end{proposition}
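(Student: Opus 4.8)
The plan is to treat the three assertions in order, leveraging the fact recalled before the statement (following \cite{Rockafellar}, Th. 13.3) that for a closed proper convex function the recession function is the support function of the domain of its conjugate. Writing $k = \Phi^*$, this gives $\mathrm{dom}\, k = \mathrm{dom}\, \Phi^* = \{y \in \R^d : y \cdot x \le \Phi_\infty(x) \text{ for all } x \in \R^d\}$, so that $\Phi_\infty = \sigma_{\mathrm{dom}\, k}$; since a support function is insensitive to taking closures and convex hulls, the possible failure of $\mathrm{dom}\, k$ to be closed causes no harm. This settles assertion (1) essentially for free, and it also furnishes the dictionary through which the quantitative information in Assumptions \ref{ass:1} about $\Phi_\infty$ on $\mathbb{S}^{d-1}$ gets converted into geometric properties of $\mathrm{dom}\, k$.

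For assertion (2), convexity of $\mathrm{dom}\, k$ follows because $k=\Phi^*$ is convex, being a Legendre--Fenchel transform, and the domain of a convex function is convex. Symmetry with respect to the origin follows from the evenness of $\Phi$: the substitution $x \mapsto -x$ in the supremum defining $\Phi^*$ shows $k(-y) = k(y)$, whence $\mathrm{dom}\, k = -\mathrm{dom}\, k$. For boundedness I would combine assertion (1) with the remark after Assumptions \ref{ass:1}, namely that condition \ref{limit1} forces $\Phi$ to be globally Lipschitz and hence $M := \sup_{\mathbb{S}^{d-1}} \Phi_\infty < +\infty$. Restricting the defining inequality to unit vectors and, for a given nonzero $y \in \mathrm{dom}\, k$, taking the direction $x = \theta_y = y/|y|$, one obtains $|y| = y \cdot \theta_y \le \Phi_\infty(\theta_y) \le M$, so $\mathrm{dom}\, k \subset \overline B(0, M)$.

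For assertion (3), I would first exhibit a ball inside $\mathrm{dom}\, k$. Setting $m := \min_{\mathbb{S}^{d-1}} \Phi_\infty$, which is strictly positive by the third item of Assumptions \ref{ass:1}, the Cauchy--Schwarz inequality gives, for $|y| \le m$ and any $\theta \in \mathbb{S}^{d-1}$, the bound $y \cdot \theta \le |y| \le m \le \Phi_\infty(\theta)$; by $1$-homogeneity the inequality $y \cdot x \le \Phi_\infty(x)$ then holds for every $x \in \R^d$, so $\overline B(0, m) \subset \mathrm{dom}\, k$ and the interior is nonempty. Finally, to see that $\Phi_\infty$ is a norm I would verify the axioms directly: nonnegativity is inherited from $\Phi \ge 0$; the identity $\Phi_\infty(\lambda x) = |\lambda| \Phi_\infty(x)$ is exactly the $1$-homogeneity already recorded in the remark; subadditivity follows from convexity together with positive homogeneity of $\Phi_\infty$, since $\Phi_\infty(x+y) = 2\Phi_\infty((x+y)/2) \le \Phi_\infty(x) + \Phi_\infty(y)$; and positive definiteness follows from $\Phi_\infty(x) = |x|\,\Phi_\infty(\theta_x) \ge m\,|x| > 0$ for $x \ne 0$. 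Equivalently, one may simply invoke that the support function of a symmetric convex body --- which is precisely what (2) together with the nonempty-interior claim shows $\mathrm{dom}\, k$ to be --- is always a norm.

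None of the steps is genuinely hard; the whole content lies in matching the abstract Rockafellar identity of assertion (1) with the two quantitative hypotheses $\sup_{\mathbb{S}^{d-1}}\Phi_\infty < \infty$ and $\min_{\mathbb{S}^{d-1}} \Phi_\infty > 0$, which yield boundedness and nonempty interior respectively. The only points demanding a little care are the passage between $\mathrm{dom}\, k$ and its closure when using the support-function identity, and the reduction of the defining inequality to the sphere via $1$-homogeneity; once these are in place, the remaining verifications are routine.
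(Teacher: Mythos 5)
Your route is essentially the paper's: assertion (1) is the Rockafellar identity $\Phi_\infty=\sigma_{\mathrm{dom}\,\Phi^*}$, boundedness comes from $\sup_{\mathbb{S}^{d-1}}\Phi_\infty<+\infty$, symmetry from evenness of $\Phi$, and $\min_{\mathbb{S}^{d-1}}\Phi_\infty>0$ drives the nonempty interior (your explicit verification of the norm axioms is actually more detailed than the paper, which leaves that point implicit). However, one step fails as written, and it is precisely the closure subtlety you flagged yourself and then did not respect. The inequality set $\{y\in\R^d:\ y\cdot x\le \Phi_\infty(x)\ \forall x\in\R^d\}$ is the \emph{closure} $\overline{\mathrm{dom}\,k}$, not $\mathrm{dom}\,k$: a support function cannot distinguish a convex set from its closure, so membership in that set does not certify $\Phi^*(y)<+\infty$. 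Consequently your claim in assertion (3) that $\overline B(0,m)\subset \mathrm{dom}\,k$, with $m=\min_{\mathbb{S}^{d-1}}\Phi_\infty$, is false in general: in the paper's own Example \ref{ex:Wilson} (Wilson's model) one has $m=1$ and $\mathrm{dom}\,k=B(0,1)$ open, so the closed ball $\overline B(0,1)$ is \emph{not} contained in $\mathrm{dom}\,k$. Your argument proves only $\overline B(0,m)\subset\overline{\mathrm{dom}\,k}$. (The analogous use of the inequality in your boundedness argument is legitimate, since there you only need $\mathrm{dom}\,k\subset\overline{\mathrm{dom}\,k}$.)

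The conclusion survives with a small repair, in either of two ways. (i) For a convex set the interior coincides with the interior of its closure (Rockafellar, Th. 6.3, via relative interiors, noting the affine hulls agree), so $B(0,m)\subset \mathrm{int}\,\overline{\mathrm{dom}\,k}=\mathrm{int}\,\mathrm{dom}\,k$, which already gives nonempty interior. (ii) Argue directly that $|y|<m$ forces $\Phi^*(y)<+\infty$: by \eqref{eq:Phiexpand} (or by Dini's theorem applied to the monotone family $t\mapsto \Phi(t\theta)/t$ on $\mathbb{S}^{d-1}$, recalling $\Phi(0)=0$) one gets a uniform lower bound $\Phi(x)\ge m'|x|-C$ for any $m'<m$, as in Step 3 of the proof of Lemma \ref{lm:coherence}, whence $y\cdot x-\Phi(x)\le (|y|-m')|x|+C$ is bounded above. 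Route (ii) is in substance what the paper does: it observes that the sublevel sets of $\Phi$ are bounded and invokes the standard coercivity duality to conclude $0\in\mathrm{int}\,\mathrm{dom}\,k$. Everything else in your proposal --- convexity and symmetry of $\mathrm{dom}\,k$, the inclusion $\mathrm{dom}\,k\subset\overline B(0,M)$, and the norm axioms for $\Phi_\infty$ (where evenness of $\Phi_\infty$, inherited from $\Phi$, is what extends positive homogeneity to negative scalars) --- is correct and matches the paper's argument.
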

\begin{proof}
This follows from well-know results in convex analysis, see e.g. \cite{Rockafellar}. Since $\Phi_\infty$ is the support function of $\mathrm{dom}\, \Phi^*$, we have the representation formula 
$\Phi_\infty=\sigma_{\mathrm{dom}\, k}=\sigma_{\overline{\mathrm{dom}\, k}}$; hence $\overline{\mathrm{dom}\, k}=
\{y\in \mathbb R^d / y \cdot x \le \Phi_\infty(x)\, \forall x \in \mathbb R^d\}$. We also have that $\Phi_\infty^*=\chi_{\overline{\mathrm{dom}\, k}}$ 
(then $\overline{\mathrm{dom}\, k}=\{y\in \mathbb R^d / \Phi_\infty^*(y)=0\}$).

Given that $\Phi_\infty$ is finite, we deduce that $\mathrm{dom}\, k$ is bounded. Recall next that $\Phi$ is even, then we have that $\Phi^*$ 
is also even and thus $\mathrm{dom}\, k$ is symmetric with respect to the origin. 
Since $x=0$ is the global minimum of $\Phi$, we have that $0\in \mathrm{dom}\, \Phi^* = \mathrm{dom}\, k$. 
This also implies that $\mathrm{dom}\, \Phi^*$ is balanced (meaning that $\alpha \mathrm{dom}\, \Phi^* \subset \mathrm{dom}\, \Phi^*$ for every $\alpha \in [-1,1]$). 
Besides, note that the level sets of $\Phi$ are bounded, which implies that $\mathrm{dom}\, k$ has nonempty interior and moreover $0 \in \mbox{int dom}\, k $.
\end{proof}
We can characterize the domain of the cost function in the following way:
\begin{proposition}
\label{pr:costasnorm}
Let $\Phi$ satisfy Assumptions \ref{ass:1}. 
Define an associated cost function via $k=\Phi^*$. Then $\overline{\mbox{dom}\, k}$ coincides with the Wulff shape of $\Phi_\infty$, i.e. the dual unit ball of $\Phi_\infty$.
\end{proposition}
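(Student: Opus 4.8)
The plan is to read the claim off directly from the support-function description of $\overline{\mathrm{dom}\, k}$ already obtained in Proposition \ref{prop:Phinorm}, and then to match that set with the definition of the Wulff shape. First I would recall the two facts extracted there: that $\Phi_\infty$ is a genuine norm on $\R^d$ (here the nondegeneracy hypothesis $\min_{\mathbb{S}^{d-1}}\Phi_\infty>0$ from Assumptions \ref{ass:1} is essential, since it prevents $\Phi_\infty$ from degenerating into a mere seminorm), and that $\overline{\mathrm{dom}\, k}$ admits the representation $\overline{\mathrm{dom}\, k}=\{y\in\R^d : y\cdot x\le \Phi_\infty(x)\ \forall x\in\R^d\}$.

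Next I would write down the object to be matched. The Wulff shape of the norm $\Phi_\infty$, i.e. its dual unit ball, is the unit ball $\{y : \Phi_\infty^\circ(y)\le 1\}$ of the dual norm $\Phi_\infty^\circ(y):=\sup_{\Phi_\infty(x)\le 1} y\cdot x$. The third step is the short polarity computation identifying this ball with the set above: by $1$-homogeneity of $\Phi_\infty$, the inequality $\Phi_\infty^\circ(y)\le 1$ holds precisely when $y\cdot x\le 1$ for every $x$ with $\Phi_\infty(x)\le 1$, which in turn is equivalent to $y\cdot x\le \Phi_\infty(x)$ for all $x\in\R^d$. Hence the Wulff shape equals $\{y : y\cdot x\le\Phi_\infty(x)\ \forall x\in\R^d\}$, which is exactly the set characterizing $\overline{\mathrm{dom}\, k}$, and the proof concludes.

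I do not expect a genuine obstacle here: the entire analytic content (boundedness, symmetry, nonempty interior, and the normed structure of $\Phi_\infty$) has already been extracted in Proposition \ref{prop:Phinorm}, so what remains is a bookkeeping identification of the polar body with the dual unit ball. The only point requiring a little care is conceptual rather than technical, namely to make explicit that \emph{Wulff shape} and \emph{dual unit ball} are being used synonymously for the polar of the unit ball of $\Phi_\infty$, and to confirm that this polar is itself a bounded convex body with the origin in its interior, which is again guaranteed by the fact that $\Phi_\infty$ is a norm. One could alternatively route the argument through the $0$-sublevel set of $\Phi_\infty^*$, since Proposition \ref{prop:Phinorm} also records $\Phi_\infty^*=\chi_{\overline{\mathrm{dom}\, k}}$, but the dual-norm formulation is the most transparent.
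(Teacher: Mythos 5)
Your proposal is correct and follows essentially the same route as the paper: both invoke Proposition \ref{prop:Phinorm} for the norm property of $\Phi_\infty$ and the representation $\overline{\mathrm{dom}\, k}=\{y\in\R^d : y\cdot x\le \Phi_\infty(x)\ \forall x\in\R^d\}$, then identify this set with the unit ball of the dual norm (the paper writes it as $\|y\|_*=\sup_{x\neq 0}\, y\cdot x/\Phi_\infty(x)$, which by $1$-homogeneity is exactly your $\Phi_\infty^\circ$). Your explicit homogeneity step and the remark that positive-definiteness of $\Phi_\infty$ is what licenses dividing by $\Phi_\infty(x)$ are left implicit in the paper but are the same argument.
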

\begin{proof}
During this proof we set $\|x\|:=\Phi_\infty(x)$, which defines a norm thanks to Proposition \ref{prop:Phinorm}. Its unit ball is given by the set $\{y \in \R^d /\Phi_\infty(y) \le 1\}$. Now we define the dual norm
$$
\|y\|_*:= \sup_{x\in \R^d\backslash \{0\}} \frac{y\cdot x}{\|x\|} = \sup_{x\in \R^d\backslash \{0\}} \frac{y\cdot x}{\Phi_\infty(x)}\:.
$$
Taking into account the representation $\overline{\mbox{dom}\, k}=\{y\in \R^d / y \cdot x \le \Phi_\infty(x)\, \forall x \in \R^d\}$ we deduce that $\overline{\mbox{dom}\, k}$ agrees with the unit ball for the norm $\|\cdot \|_*$. 
\end{proof}
We show by means of an example that taking the closure in the former result cannot be dispensed with, i.e. there are relevant cases where the domain of the cost function is not closed. Note in passing that our framework includes the isotropic case, as already hinted in Remark \ref{rm:isog}.
 \begin{example}
 \label{ex:Wilson}
 Wilson's model is already mentioned in \cite{Mihalas1984}. Normalizing all physical constants, it reads
 $$
\frac{\partial u}{\partial t} = \div \left( \frac{u\, \nabla u}{u + |\nabla u|} \right) \quad \mbox{in}\, \, \, Q_T
$$
We readily identify
\[
\nabla k^*(x)=\frac{x}{1+|x|}
\]
and hence
\[
\Phi(x)=k^*(x)=|x|-\log(1+|x|)\]
Moreover, the associated recession function can be computed as follows
\begin{align*}
\Phi_{\infty}(0)=&\lim_{t\to 0^+} t\Phi\left(0\right)=0,\\
\Phi_{\infty}(x)=&\lim_{t\to 0^+} t\Phi\left(\dfrac{x}{t}\right)=\lim_{t\to 0^+} t\left(\left|\dfrac{x}{t}\right|-\log\left(1+\left|\dfrac{x}{t}\right|\right)\right)\\
=&|x|-\lim_{t\to 0^+}t\log\left(1+\left|\dfrac{x}{t}\right|\right)\underset{\substack{x\ne 0\\s=\frac1t}}{=}|x|-\lim_{s\to +\infty}|x|\dfrac{\log\left(1+\left|xs\right|\right)}{|x|s}=|x|.
\end{align*}
Note also that
\[
\Phi^*_{\infty}(x)=\sup_{v\in\mathbb R^d}\langle v,x\rangle-|v|=\sup_{\lambda \ge 0}\lambda|x|-\lambda=\chi_{\overline{B(0,1)}}(x).
\]
In particular, we have that $\Phi_{\infty}$ is a norm.
We now compute the Legendre transform
\begin{align*}
k(x)=\Phi^*(x)=&\sup_{\xi\in\mathbb R^d}\left\{\langle \xi,x\rangle -|\xi|+\log(1+|\xi|)\right\}\\
=&\sup_{\substack{\xi=\lambda v\\ v\in\mathbb{S}^{d-1}\\ \lambda \ge 0}}\left\{\lambda\langle v,x\rangle -\lambda+\log(1+\lambda)\right\}\\
=&\sup_{\lambda\ge 0}\left\{\lambda(|x|-1)+\log(1+\lambda)\right\}
\end{align*}
We notice that if $|x|\ge 1$ immediately $k(x)=+\infty$, otherwise the sup is actually a maximum
achieved at $\lambda=\dfrac{|x|}{1-|x|}$ leading to
\begin{align*}
k(x)=\begin{cases}+\infty,&\textrm{ if }|x|\ge 1,\\ \\
-|x|+\log\left(\dfrac{1}{1-|x|}\right),&\textrm{ if }|x|< 1.
\end{cases}
\end{align*}
We check that $\mathrm{dom}\,k=B(0,1)$ and $\overline{\mathrm{dom}\,k}=\overline{B(0,1)}$, thus 
\[\sigma_{\mathrm{dom}\,k}(x)=\sigma_{\overline{\mathrm{dom}\,k}}(x)=\sup_{|v|< 1}\langle v,x\rangle=\sup_{|v|\le 1}\langle v,x\rangle=|x|.\]
Moreover $\Phi^*_{\infty}=\chi_{\overline{B(0,1)}}=\chi_{\overline{\mathrm{dom}\,k}}$.
 \end{example}
 Next we present an example of a truly anisotropic situation (models induced by anisotropic norms) where we can compute the closure of the domain of the associated cost functions; an anisotropic extension of the former Example \ref{ex:Wilson} fits here in a natural way.
 \begin{example}
 \label{ex:matrix_cost}
 {\rm 
Let $\psi$ be defined in terms of a positive definite symmetric matrix $A$ as in Lemma \ref{lm:example}. We can  compute $\overline{\mbox{dom}\, k}$ in terms of the properties of $A$. For that aim, let $A$ 
be a $d\times d$ positive definite symmetric matrix.
Recall that for any $w,z\in\mathbb R^d$ 
we set $\langle w,z\rangle_A
=z A w^T$ and $\|z\|_{A}:=\sqrt{\langle z,z\rangle_A}$.
Notice that $\langle w,z\rangle_A$ is a scalar product on $\mathbb R^d$, and its associated norm is equivalent to the Euclidean norm on $\mathbb R^d$.
We denote by $\overline{B}_A:=\{x\in\mathbb R^d:\,\|x\|_A\le 1\}$ the closed unit ball centered at the origin for the norm $\|\cdot\|_A$.
\par\medskip\par
Set $\Phi_A(z)=G\left(\|z\|^2_A/2\right)$ where $G:[0,+\infty)\to [0,+\infty)$ is a $C^2$ function satisfying $G(0)=0$ and assume that $\Phi_A(\cdot)$ is convex. We compute the associated recession function (denoted here by $\Phi_A^{\infty}$) as follows. For all $z\in\mathbb R^d$, $z\ne 0$ we have
\begin{align*}
\Phi_A^{\infty}(z)=&
\lim_{\lambda\to +\infty}\dfrac{\Phi_A(\lambda z)}{\lambda}=\lim_{\lambda\to +\infty}\dfrac{G\left(
\lambda^2\|z\|^2_A/2\right)}{\lambda}\\
=&\|z\|_A\lim_{\lambda\to +\infty} \dfrac{G\left(
\lambda^2\|z\|^2_A/2\right)}{\lambda\|z\|_A}=\|z\|_A\lim_{R \to +\infty}\dfrac{G\left(
R^2/2\right)}{R},
\end{align*}
while $\Phi_A^\infty(0)=0$. Hence the sublevel sets of $\Phi_A^{\infty}(\cdot)$ are the balls $\rho \overline{B}_A$, $\rho\ge 0$, of the norm $\|\cdot\|_A$.
By applying the l'H\^opital rule, we have 
\begin{align*}
\ell:=
\lim_{R\to +\infty}\dfrac{G\left(
R^2/2\right)}{R}=\lim_{R\to +\infty}R G'\left(
R^2/2\right)=\sqrt{2}\cdot\lim_{r\to +\infty} \sqrt r G'\left(r\right).
\end{align*}
If we assume, as in the first of formulas in (\ref{eq:newdecay}),  
\[\lim_{r\to +\infty} \sqrt r G'\left(r\right)=1/\sqrt{2},\]
we obtain $\Phi_A^{\infty}(z)= \|z\|_A$, same as in \eqref{eq:reccA}.
\par\medskip\par
We have 
\begin{align*}
\Phi_A^*(Aw^T)=&\sup_{z\in\mathbb R^d}\left\{ zAw^T-G\left(
\|z\|^2_A/2\right)\right\}=\sup_{z\in\mathbb R^d}\left\{\langle w,z\rangle_A-G\left(
\|z\|^2_A/2\right)\right\}\\
=&\sup_{R\ge 0}\sup_{\|z\|_A=R}\left\{\langle w,z\rangle_A-G\left(
 R^2/2\right)\right\}=\sup_{R\ge 0}\left\{R\cdot\|w\|_A-G\left(
  R^2/2\right)\right\}\\
=&\max\left\{0,\sup_{R>0}R\cdot\left\{\|w\|_A-\dfrac{G\left(
 R^2/2\right)}{R}\right\}\right\}\:.
\end{align*}
Taking into account the computation of $\ell$ we readily see that $\Phi_A^*(Aw^T)$ diverges for $\|w\|_A>\ell$ and remains finite for $\|w\|_A<\ell$. Therefore, 
\begin{align*}
\overline{\mathrm{dom}\,k}=\mathrm{dom}\,\Phi_A^*\subset&\left\{y\in\mathbb R^d:\,\|A^{-1}y\|_A\le\ell\right\}\\=&\left\{y\in\mathbb R^d:\,\|y\|_{A^{-1}}\le\ell\right\}=\ell\overline{B}_{A^{-1}}.
\end{align*}
By the same token,  
$$
\ell {B}_{A^{-1}} \subset \overline{\mathrm{dom}\,k} \subset \ell\overline{B}_{A^{-1}}\:.
$$
 Therefore, $\overline{\mathrm{dom}\,k}$ coincides with $\ell\overline{B}_{A^{-1}}$. Working as in \cite{Calvo2015} we may show that the cost function will be finite at $\partial (\ell B_{A^{-1}})$ provided that the map $z \mapsto \ell- z G'(z^2/2)$ is integrable at infinity.
}
 \end{example}

We can also approach the former developments the other way around: we may define a cost function supported on a convex set and write down the associated evolution model. 
\begin{proposition}
Fix a norm $\|\cdot\|_{k}$ on $\mathbb R^d$ and set ${B}_k:=\{y\in\mathbb R^d:\, \|y\|_{k}< 1\}$. Let $h:\mathbb R^d\to\mathbb R\cup\{+\infty\}$ be proper, strictly convex, lower semicontinuous and even with $\overline{\mathrm{dom}\,h}= \overline{B}_k$ and $h(0)=0$. Assume further that $h\in C^2(B_k)$ with $D^2\,h$ positive definite
and 
\[
\lim_{\substack{\|x\|_k\to 1^-\\ x\in B_k}}|\nabla h(x)|=+\infty.
\]
Define the cost
\[k(x)=\begin{cases}h(x),&\textrm{ if }\|x\|_{k}\le 1,\\ +\infty,&\textrm{ otherwise}.\end{cases}\]
Then, the anisotropic tempered diffusion model \eqref{templatetr2} is well-posed.
\end{proposition}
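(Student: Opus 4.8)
The plan is to reduce the claim to Lemma~\ref{lm:coherence}: it suffices to show that the potential $\Phi:=k^*$ fulfils Assumptions~\ref{ass:1}, since then $\psi:=\nabla\Phi=\nabla k^*$ is precisely the field generating \eqref{templatetr2}. I would first dispose of the structural conditions. Because $h$ is convex, even and attains its minimum $h(0)=0$ at the origin, we have $h\ge 0$; moreover the hypothesis $\overline{\mathrm{dom}\,h}=\overline B_k$ forces $h\equiv+\infty$ outside $\overline B_k$, so that $k\equiv h$ and hence $\Phi=h^*$. Taking conjugates, $\Phi$ is convex (a supremum of affine maps), even (since $h$ is even), nonnegative, and $\Phi(0)=-\inf h=0$. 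Thus item~1 of Assumptions~\ref{ass:1} holds, and it only remains to establish $C^2$ regularity and to control the recession function.

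The core of the proof is the regularity $\Phi\in C^2(\R^d)$, which I would derive from Legendre duality (see e.g. \cite{Rockafellar}) once the gradient map $\nabla h:B_k\to\R^d$ is shown to be a $C^1$-diffeomorphism onto all of $\R^d$. Injectivity is immediate from strict convexity, and since $D^2h$ is positive definite the inverse function theorem makes $\nabla h$ a local $C^1$-diffeomorphism; in particular its image $V:=\nabla h(B_k)$ is open. The decisive observation is that $V$ is also closed: if $\nabla h(x_n)\to y$ then $\{\nabla h(x_n)\}$ is bounded, so the blow-up hypothesis $|\nabla h(x)|\to+\infty$ as $\|x\|_k\to 1^-$ prevents $\|x_n\|_k$ from accumulating at $1$, i.e. the $x_n$ stay in a compact subset of $B_k$; a convergent subsequence $x_{n_j}\to x\in B_k$ then yields $\nabla h(x)=y\in V$ by continuity. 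As $\R^d$ is connected and $V$ is nonempty, open and closed, $V=\R^d$. Consequently the supremum defining $\Phi(y)=h^*(y)$ is attained for every $y$, so $\mathrm{dom}\,\Phi=\R^d$, and the classical inversion formulas give $\nabla\Phi=(\nabla h)^{-1}\in C^1$ together with $D^2\Phi(y)=[D^2h(\nabla\Phi(y))]^{-1}>0$; in particular $\Phi\in C^2(\R^d)$.

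Finally I would control $\Phi_\infty$. Since $\Phi=k^*$ is proper, lower semicontinuous and convex we have $\Phi^{*}=k$, and $\Phi_\infty$ equals the support function of $\mathrm{dom}\,\Phi^*=\mathrm{dom}\,k$, whose closure is $\overline B_k$; as this set is bounded, $\Phi_\infty$ is finite on all of $\R^d$, so $\mathbb{S}^{d-1}\subset\mathrm{dom}\,\Phi_\infty$ and condition~\ref{limit1} holds. For the positivity condition I would use that $\|\cdot\|_k$ is equivalent to the Euclidean norm, hence $B(0,\varepsilon)\subset B_k$ for some $\varepsilon>0$; then $\Phi_\infty(\xi)=\sigma_{\overline B_k}(\xi)\ge\sigma_{\overline B(0,\varepsilon)}(\xi)=\varepsilon|\xi|$, so $\min_{\mathbb{S}^{d-1}}\Phi_\infty\ge\varepsilon>0$. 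With all of Assumptions~\ref{ass:1} verified, Lemma~\ref{lm:coherence} applies to $\psi=\nabla\Phi$ and yields well-posedness of \eqref{templatetr2} in the entropy solution class.

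I expect the surjectivity of $\nabla h$ --- equivalently, the properness encoded in the boundary blow-up of $|\nabla h|$ --- to be the main obstacle, since this is the one place where the full strength of the hypotheses on $h$ (strict convexity, positive definite Hessian, and the steepness $|\nabla h|\to+\infty$ at $\partial B_k$) is genuinely needed; the remaining verifications are routine convex analysis.
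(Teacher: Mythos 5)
Your proof is correct, and its overall architecture coincides with the paper's: set $\Phi=k^*=h^*$, show $\Phi\in C^2(\R^d)$ via the inverse Hessian formula $D^2\Phi(p)=\left[D^2h\left((\nabla h)^{-1}(p)\right)\right]^{-1}$, verify Assumptions~\ref{ass:1}, and conclude through Lemma~\ref{lm:coherence} applied to $\psi=\nabla\Phi=\nabla k^*$, which is exactly the field appearing in \eqref{templatetr2}. The one substantive difference lies in how the key duality step is justified. The paper observes that $(B_k,h)$ is a pair of Legendre type and invokes Theorem~26.5 of \cite{Rockafellar} as a black box: the boundary blow-up $|\nabla h|\to+\infty$ as $\|x\|_k\to 1^-$ is precisely the essential smoothness that theorem requires, and it delivers in one stroke that $\nabla h$ is a bijection from $B_k=\mathrm{int}\,\mathrm{dom}\,h$ onto $\mathrm{int}\,\mathrm{dom}\,h^*=\R^d$ with $\nabla h^*=(\nabla h)^{-1}$. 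You instead prove surjectivity of $\nabla h$ by hand: injectivity from strict convexity, openness of the image from the inverse function theorem (here the positive definiteness of $D^2h$ enters), and closedness from the blow-up hypothesis, whence the image is all of $\R^d$ by connectedness. This elementary open-plus-closed argument is sound and has the merit of isolating exactly where each hypothesis on $h$ is used, whereas the citation route is shorter and stays within standard convex duality. You also spell out what the paper compresses into ``the rest of the proof follows easily'': that $\Phi$ is nonnegative, even and convex, the identification $\Phi_\infty=\sigma_{\mathrm{dom}\,k}=\sigma_{\overline B_k}$ giving finiteness of the recession function, and the lower bound $\Phi_\infty(\xi)\ge\varepsilon|\xi|$ from the equivalence of $\|\cdot\|_k$ with the Euclidean norm, which settles the positivity condition in Assumptions~\ref{ass:1}. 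Net assessment: same strategy and same pivotal formula, with the paper's appeal to the Legendre-pair theorem replaced by a self-contained diffeomorphism argument, at the modest cost of a few extra lines.
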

\begin{proof}
Under the present assumptions we have that $(B_k,h)$ is a Legendre pair in the sense of Theorem 26.5 of \cite{Rockafellar}. Moreover, by the smoothness of $h$ and the nondegeneracy
of its Hessian, we obtain that for all $p\in\mathrm{int}\,\mathrm{dom}\,h^*=\mathbb R^d$
\[D^2h^*(p)=\left[D^2h\left((\nabla h)^{-1}(p)\right)\right]^{-1},\]
and so $h^*\in C^2(\mathbb R^d)$, since $\nabla h$ and $D^2h$ are diffeomeorphisms. In this way $\Phi \in C^2(\R^d)$ and the rest of the proof follows easily from here.
\end{proof}
The existence and uniqueness of optimal transport maps for relativistic costs as those in the former statement has been analyzed in \cite{Bertrand2013,Chen}, whereas the existence of Kantorovich potentials was the subject of \cite{BertrandOtro}. All these contributions constitute a promising first step towards an anisotropic version of the results by \cite{McCann2009}, which stated the rigorous convergence of the JKO scheme for cost functions supported on a ball towards a tempered diffusion evolution equation.

%%%%%%%%%%%%
\section{Comparison principles}
\label{sec:comp}
The goal of this section is to translate the information about finite propagation speed for (\ref{eq:anisotropic_template}) coming from the analysis of the cost function into specific results describing the evolution of the support of  actual solutions of (\ref{eq:anisotropic_template}). We do this by means of comparison principles with suitable sub- and super-solutions. The funtional framework to be used in this section is given at the appendix in Section \ref{sec:appendix}.

%%%%%%
\subsection{Supersolutions, upper bounds on the spreading rate}
We shall show that compactly supported initial data launch solutions whose support is contained in the dilation of the original support by the Wulff shape of $\Phi_\infty$. For that aim we construct specific supersolutions. 
\begin{proposition}
\label{prop:super}
Let $\beta>0$ and let $K\subset \R^d$ be a compact set, which is either convex or has a boundary of class $C^2$. Let Assumptions \ref{ass:1} be satisfied. Let $K(t):=K \oplus t\, \overline{\mathrm{dom}\, k}$. Then $W(t,x)=\beta \1_{K(t)}$ is a super-solution of (\ref{eq:anisotropic_template}) in $Q_T$ for every $T>0$.
\end{proposition}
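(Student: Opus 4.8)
The plan is to verify directly that $W$ fulfils the definition of entropy super-solution recalled in the appendix (Section~\ref{sec:appendix}), exploiting the fact that the only nontrivial contribution comes from the moving interface $\partial K(t)$. The geometric cornerstone is elementary: since $\overline{\mathrm{dom}\, k}$ is convex and $\Phi_\infty=\sigma_{\overline{\mathrm{dom}\, k}}$ by Proposition~\ref{prop:Phinorm}, support functions add under the Minkowski sum, $\sigma_{K(t)}=\sigma_K+t\,\Phi_\infty$, so the outer boundary of $K(t)$ advances with normal velocity $V_\nu(t,x)=\Phi_\infty(\nu)$ at each point of $\partial K(t)$ carrying outer unit normal $\nu$. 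By \eqref{eq:psi_infty} this is precisely the saturated front speed dictated by the flux, i.e. the Rankine--Hugoniot speed appearing in item~4 of Theorem~\ref{th:1}. Thus $W$ should be a super-solution, with equality holding at the front.

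First I would check the admissibility requirements and build the flux field. For each fixed $t$ the set $K(t)$ is convex (when $K$ is) or, when $\partial K\in C^2$, has a rectifiable boundary; in either case $K(t)$ is compact of finite perimeter, so $W(t)=\beta\1_{K(t)}\in BV(\R^d)\cap L^\infty(\R^d)$ with $DW(t)=\beta\,\nu\,\H^{d-1}\lfloor\partial^*K(t)$, while $t\mapsto K(t)$ being Lipschitz for the $L^1$/Hausdorff distance gives $W\in C([0,T];L^1(\R^d))$ with the required truncations. To exhibit the bounded field $\z$ realizing the flux, note that in the interior and the exterior of $K(t)$ the function $W$ is locally constant, so $\nabla W=0$ and, since $\psi(0)=0$, the classical flux $\a(W,\nabla W)=W\psi(0)=0$ vanishes; there I set $\z\equiv0$. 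Near $\partial K(t)$ I choose $\z$ so that its normal trace realizes the saturated flux, built from the contact point $p(\nu)\in\partial\Phi_\infty(\nu)\subset\overline{\mathrm{dom}\, k}$ characterized by $\nu\cdot p(\nu)=\Phi_\infty(\nu)$. Because $\mathrm{Im}\,\psi\subset\mathrm{dom}\, k$ is bounded (Lemma~\ref{lm:folklore} together with Proposition~\ref{prop:Phinorm}) one gets $\|\z\|_\infty$ below the saturation bound, and by construction the Anzellotti pairing $(\z,DW)$ reproduces the recession term $\beta\,\Phi_\infty(\nu)=F_\infty(\beta,\nu)$ on the interface.

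Next I would insert $(W,\z)$ into the weak super-solution inequality. The bulk integrals vanish because $\partial_t W=0$ and $\z=0$ away from the interface. The time-derivative term, handled by the transport formula for the moving domain $K(t)$, produces a boundary integral with factor $V_\nu$, while the flux term, handled by the Green/Anzellotti formula, produces a boundary integral with factor $\Phi_\infty(\nu)$; both carry the same linear coefficient $\beta$ and the same test density. They combine into a single boundary measure proportional to $\bigl(V_\nu-\Phi_\infty(\nu)\bigr)$ tested against nonnegative $\phi$, which vanishes since $V_\nu=\Phi_\infty(\nu)$ (and has the correct sign for a super-solution if $K(t)$ is enlarged so that $V_\nu\ge\Phi_\infty(\nu)$). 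The entropy and truncation inequalities collapse to the same front balance because $W$ takes only the values $0$ and $\beta$.

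The main obstacle is not the geometry---the identity $V_\nu=\Phi_\infty(\nu)$ is immediate from additivity of support functions---but the analytic bookkeeping in the entropy framework: constructing $\z$ as a genuine $L^\infty$ field whose normal trace on the (possibly only Lipschitz) moving surface $\partial K(t)$ equals the saturated flux, computing the pairing $(\z,DW)$ at this moving jump set, and confirming that the resulting boundary measure carries the correct sign. Care is also needed with the regularity of the Minkowski sum $K\oplus t\,\overline{\mathrm{dom}\, k}$---in particular when the Wulff shape $\overline{\mathrm{dom}\, k}$ has flat faces or corners---so that $\partial K(t)$ stays rectifiable with an $\H^{d-1}$-a.e. defined normal. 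I expect to argue as in the corresponding step of \cite{Calvo2015}, replacing the Euclidean ball by the Wulff shape of $\Phi_\infty$.
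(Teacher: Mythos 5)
Your geometric core is right, and it is essentially the same as the paper's: writing $x\in\partial K(t)$ as $x=x_1+x_2$ with $x_2\in t\,\partial E$, $E=\overline{\mathrm{dom}\, k}$, the paper takes the boundary velocity $v^b(x)=r^E(x_2)$ and plays it against $\Phi_\infty=\sigma_E$ (Proposition \ref{prop:Phinorm}); your support-function identity $\sigma_{K(t)}=\sigma_K+t\,\Phi_\infty$ is the dual formulation of the same fact. The genuine gap is in your analytic mechanism. The supersolution inequality \eqref{seineqbis} gives you no freedom to construct an auxiliary field $\z$ with a prescribed normal trace: the flux entering \eqref{seineqbis} is $\a(W,\nabla W)$, which vanishes identically (since $\nabla W=0$ a.e.\ and $\psi(0)=0$), and the whole interface contribution enters through the \emph{singular parts} of the measures $h_S(W,DT(W))$ and $h_T(W,DS(W))$, which are not chosen but determined by the Dal Maso functional calculus via the recession function. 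The paper computes them following \cite{Andreu2006}, obtaining \eqref{eq:follow5} with truncation-dependent coefficients $J_{SIT'}(\beta)$, $J_{TIS'}(\beta)$ concentrated on $\partial K(t)$, and then must run a case analysis in $\beta$ versus the truncation levels $a,c$, reducing in the main case to \eqref{eq:46}, an inequality between $\Phi_\infty(\nu)\{\beta (TS)(\beta)-J_{TS}(\beta)\}$ and $-J_{TS}(\beta)\,\kappa$ with $\kappa=v^b\cdot\nu$. Your claim that ``the entropy and truncation inequalities collapse to the same front balance because $W$ takes only the values $0$ and $\beta$'' skips exactly this step, and it is not cosmetic: which direction of the front condition is binding depends on the truncations (for instance, for pairs with $(TS)(\beta)=0$ but $J_{TS}(\beta)<0$ the reduced inequality forces $\kappa\ge\Phi_\infty(\nu)$, the direction your own parenthetical hedge gestures at without resolving). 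An Anzellotti-type pairing of a hand-built $\z$ with $DW$ proves nothing about \eqref{seineqbis} as defined.

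On the credit side, your observation that the Minkowski front moves with normal velocity \emph{exactly} $\Phi_\infty(\nu)$ $\H^{d-1}$-a.e.\ is correct and even sharper than the one-sided bound the paper invokes: since $x_1+tE\subset K(t)$, the normal $\nu$ at $x=x_1+x_2$ must support $tE$ at $x_2$, whence $\nu\cdot r^E(x_2)=\sigma_E(\nu)=\Phi_\infty(\nu)$, and this equality is what makes the truncation-dependent sign conditions hold simultaneously in all cases. But without computing $[h_S(W,DT(W))]^s$ and $[h_T(W,DS(W))]^s$, without the Reynolds-theorem evaluation of $\partial_t J_{TS}(W)$ as $J_{TS}(\beta)\,|\kappa|\,\H^{d-1}_{\partial K(t)}$, and without the four-case analysis, what you have is a plan rather than a proof: deferring ``the analytic bookkeeping'' to the method of \cite{Calvo2015} defers precisely the content of the proposition.
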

\begin{remark}
The regularity assumption on $K$ is of a technical nature (it enables us to compute the normal flow induced by $K(t)$ without having to deal with fine geometrical details). We expect the result to be true under more general hypotheses (probably Lipschitz regularity for the boundary would just do), although this would require technical developments which are out of the scope of the paper. The same comment is true for the statements in Theorem \ref{th:1}, Proposition \ref{pr:1} and Corollary \ref{cor:47}. At any rate, note that the set $K$ can always be included in a slightly larger set that complies with our current regularity assumption -see Corollary \ref{cr:sc}; thus, we conjecture that our results about support spreading are close to optimal.
\end{remark}
\begin{proof}
We have to check the fulfillment of (\ref{seineqbis}), see Section \ref{sec:appendix}. Actually we will show that the inequality will hold for the absolutely continuous and singular parts of the spatial derivative separately. To deal with the absolutely continuous part, first we notice that $\a(W,\nabla W)=0$ and hence $\div \a(W,\nabla W)=0$. On the other hand, we can study the time derivative using Reynolds' transport theorem, which states that
\begin{equation}
\label{eq:Reynolds}
\frac{d}{dt} \int_{\Omega(t)} f \, dx = \int_{\Omega(t)} \frac{\partial f}{\partial t}\, dx + \int_{\partial \Omega(t)} v^b \cdot \nu f \, d\sigma\:.
\end{equation}
Here $\nu$ is the exterior normal vector to $\partial \Omega(t)$ and $v^b$ is the velocity of the transport field $\Omega(t)$ distorting the shape of $\Omega(t)$ as time advances. As a consequence, we can show that $\partial_t J_{TS}(W)$ is a positive measure and thus, the absolutely continuous terms in (\ref{seineqbis}) are fine. To prove the former claim on $\partial_t J_{TS}(W)$, we compute 
\begin{equation}
\label{eq:Wt}
W_t = \beta |\kappa(t,x)| \H_{\partial K(t)}^{d-1}\quad \mbox{and likewise} \quad \partial_t J_{TS}(W)= J_{TS}(\beta) |\kappa(t,x)| \H_{\partial K(t)}^{d-1}
\end{equation}
where $\kappa(t,x)=v^b \cdot \nu$. Here we used \eqref{eq:Reynolds} and also the fact that $J_{TS}(W)= J_{TS}(\beta) \1_{K(t)}$.

Let us compute now the singular terms. Recall the notations for truncation functions in Section \ref{sec:appendix}. Set
$$
T:= T_{a,b}^{l_1}\ge 0,\quad S:=T_{c,d}^{l_2}\le 0 \quad \mbox{with}\, \, l_1,l_2 \in \R, \, 0<a<b\, \,\mbox{and}\, \, 0<c<d\, .
$$
Following \cite{Andreu2006} we get
\begin{equation}
\label{eq:follow5}
[h_S(W,DT(W))]^s=-\Phi_\infty\left( \frac{DT_{a,b}(W)}{|DT_{a,b}(W)|}\right) \left|D^s J_{(-S)I}(T_{a,b}(W)) \right|
\end{equation}
where $I$ denotes the identity function $I(z)=z$. Using again the results in \cite{Andreu2006} we compute
$$
\left|D^s J_{(-S)I}(T_{a,b}(W)) \right| =  J_{(-S)IT'}(\beta) \H_{\partial K(t)}^{d-1}= - J_{SIT'}(\beta) \H_{\partial K(t)}^{d-1}\:.
$$
Thus
\begin{equation}
\label{eq:S_T}
[h_S(W,DT(W))]^s= \Phi_\infty\left( \frac{DT_{a,b}(W)}{|DT_{a,b}(W)|}\right) J_{SIT'}(\beta) 
\end{equation}
and likewise
$$
[h_T(W,DS(W))]^s= \Phi_\infty\left( \frac{DS_{c,d}(W)}{|DS_{s,d}(W)|}\right) J_{TIS'}(\beta)\:.
$$
To ellaborate on \eqref{eq:S_T} we note that $T_{a,b}(W)=a \1_{K(t)^c} + T_{a,b}(\beta) \1_{K(t)}$. Let us denote the exterior normal vector to $K(t)$ at $x\in \partial K(t)$ by $\nu^{\partial K(t)}(x)$. Then we readily check that
$$
D T_{a,b}(W) = (T_{a,b}(\beta)-a) \nu^{\partial K(t)}(x) \H_{\partial K(t)}^{d-1}\:.
$$
Thus, provided that $\beta >a$ we get
$$
\frac{D T_{a,b}(W)}{|D T_{a,b}(W)|}= \nu^{\partial K(t)}(x) \, \, \mbox{and hence}\, \, [h_S(W,DT(W))]^s=\Phi_\infty\left(\nu^{\partial K(t)}(x) \right) J_{SIT'}(\beta) \H_{\partial K(t)}^{d-1}\:.
$$
This goes in the same way for $[h_T(W,DS(W))]^s$. Now we discuss the different cases that arise depending on the values of $\beta,a$ and $c$.

{\em Case $\beta>a$ and $\beta>c$:} Putting all computations so far together, we deduce that the singular part of (\ref{seineq}) can be recast as
\begin{eqnarray}
\label{eq:estrella}
\int_0^T \Phi_\infty\left(\nu^{\partial K(t)}(x) \right) \left\{J_{SIT'}(\beta)+ J_{TIS'}(\beta)\right\} \int_{\partial K(t)} \phi(t) \, d\H^{d-1} \, dt
\\
\nonumber
 \le -\int_{Q_T} \phi(t) \partial_t J_{TS}(W)\, dx dt 
\end{eqnarray}
and this must hold for every $\phi \in \mathcal{D}(Q_T)$. Taking into account that $TIS'+SIT'=I(TS)'$ and integrating by parts, we further simplify the left hand side of \eqref{eq:estrella} to
$$
\int_0^T \Phi_\infty\left(\nu^{\partial K(t)}(x) \right) \left\{\beta (TS)(\beta) -J_{TS}(\beta)\right\} \int_{\partial K(t)} \phi(t) \, d\H^{d-1} \, dt\:.
$$
 Using \eqref{eq:Wt} we reduce ourselves to check whether the following inequality holds true for every $\phi \in \mathcal{D}(Q_T)$:
\begin{eqnarray}
\label{eq:46}
\int_0^T \Phi_\infty\left(\nu^{\partial K(t)}(x) \right) \left\{\beta (TS)(\beta) -J_{TS}(\beta)\right\} \int_{\partial K(t)} \phi(t) \, d\H^{d-1} \, dt
\\ 
\nonumber
\le -\int_0^T J_{TS}(\beta) \int_{\partial K(t)} \phi(t) |\kappa(t,x)| \, d\H^{d-1} \, dt\:.
\end{eqnarray}
 We denote $E=\overline{\mathrm{dom}\, k}$ for short in what follows. Given $x\in \partial K(t)$ we can write $x=x_1+x_2$ with $x_1 \in \partial K$ and $x_2\in t \partial E$.
Here we use that $E$ is symmetric with respect to the origin thanks to Proposition \ref{prop:Phinorm}. Due to the regularity assumption on $K$, this decomposition is unique for a.e. $(t,x) \in Q_T$. Then, to first order in $t$, the trajectory traced by $x$ as we dilate $K$ by $tE$ is given by $x_1+t\,r^E(x_2)$, being $r^E(x_2)$ the radius vector of the point lying at the intersection of $\partial E$ with the ray determined by $x_2$. In that fashion, 
  $v^b(x)=r^{E}(x_2)$. Given that $(TS)(\beta)\le 0$, a sufficient condition for \eqref{eq:46} to hold is:
$$
\nu^{\partial K(t)}(x)\cdot r^{E}(x_2) \le \Phi_\infty(\nu^{\partial K(t)}(x))\quad a.e.\, \, (t,x) \in Q_T\:.
$$
Since $E=\overline{\mathrm{dom}\, k}$ is a closed set we have that $\partial E$ is contained within $E$ and more specifically we have $r^{E}(x_2)\in \overline{\mathrm{dom}\, k}$. Hence the above inequality relating $\nu^{\partial K(t)}(x)$ and $r^{E}(x_2)$ is authomatically satisfied as $\Phi_\infty=\sigma_{\overline{\mathrm{dom}\, k}}$. This implies that \eqref{eq:estrella} is finally satisfied.

{\em Case $\beta \le a$ and $\beta \le c$:} We note that when $\beta \le a$ the multiplicative factor $\left|D^s J_{(-S)I}(T_{a,b}(W)) \right|$ in \eqref{eq:follow5} vanishes. Hence $[h_S(W,DT(W))]^s$ does not contribute to \eqref{seineq}; the same happens with $[h_T(W,DS(W))]^s$. Therefore the left hand side of \eqref{seineqbis} vanishes identically, whereas the right hand side is non-negative thanks to \eqref{eq:Wt} and the fact that $J_{TS}(\beta)\le 0$, which concludes the proof in this case. 

{\em Case $\beta > a$ and $\beta \le c$:} We have to check the validity of the following inequality:
\begin{eqnarray}
\nonumber
\int_0^T \Phi_\infty\left(\nu^{\partial K(t)}(x) \right) J_{SIT'}(\beta) \int_{\partial K(t)} \phi(t) \, d\H^{d-1} \, dt
\\
\nonumber
\le -\int_0^T J_{TS}(\beta) \int_{\partial K(t)} \phi(t) |\kappa(t,x)| \, d\H^{d-1} \, dt\:.
\end{eqnarray}
Given that both $J_{SIT'}(\beta)$ and $J_{TS}(\beta)$ are non-positive this follows inmediately.

{\em Case $\beta \le a$ and $\beta > c$:} Now we would have to check the validity of 
\begin{eqnarray}
\nonumber
\int_0^T \Phi_\infty\left(\nu^{\partial K(t)}(x) \right) J_{TIS'}(\beta) \int_{\partial K(t)} \phi(t) \, d\H^{d-1} \, dt
\\
\nonumber
\le -\int_0^T J_{TS}(\beta) \int_{\partial K(t)} \phi(t) |\kappa(t,x)| \, d\H^{d-1} \, dt\:.
\end{eqnarray}
We write 
$$
J_{TIS'}(\beta)= \beta (TS)(\beta) - J_{TS}(\beta) -J_{SIT'}(\beta).
$$
The term in $-J_{TS}(\beta)$ is compensated with the right hand side as explained in the case $\beta > a$ and $\beta > c$;
the other two terms are non-positive. Thus, the inequality is fulfilled.
\end{proof}
\begin{corollary}
\label{cr:sc}
(support control). Let $0\le u_0\in L^\infty (\R^d)$ be compactly supported. Let $u(t)$ be the  entropy solution of \eqref{eq:anisotropic_template} with initial datum $u_0$. Consider a compactly supported set $K$ which is either convex or with a boundary of class $C^2$, such that $\mbox{supp}\, u_0 \subset K$. Then there holds that
\begin{equation}
\label{eq:supsubset}
\mbox{supp}\, u(t) \subset K \oplus t\, \overline{\mathrm{dom}\, k}\quad \forall t \ge 0.
\end{equation}
\end{corollary}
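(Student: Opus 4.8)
The plan is to combine the supersolution constructed in Proposition \ref{prop:super} with the comparison principle that is built into the entropy solution framework of \cite{Andreu2005elliptic,Andreu2005parabolic} (recalled in Section \ref{sec:appendix}). The only genuinely substantial ingredient, namely the verification that $\beta \1_{K(t)}$ is an entropy supersolution, has already been carried out in Proposition \ref{prop:super}; the corollary is then obtained by ordering the initial data and propagating that order in time.

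Concretely, I would first fix $\beta := \|u_0\|_\infty < \infty$, which is admissible since $u_0 \in L^\infty(\R^d)$. Because $\mbox{supp}\, u_0 \subset K$ and $u_0 \ge 0$, this choice guarantees the pointwise ordering of the initial data $u_0 \le \beta \1_K = W(0,\cdot)$ almost everywhere. Proposition \ref{prop:super} then applies (with this $\beta$ and this $K$) and tells us that $W(t,x) = \beta \1_{K(t)}$, where $K(t) = K \oplus t\,\overline{\mathrm{dom}\, k}$, is an entropy supersolution of \eqref{eq:anisotropic_template} on $Q_T$ for every $T>0$. Next I would invoke the comparison principle for entropy solutions: the entropy solution $u$ issued from $u_0$ and the entropy supersolution $W$ issued from $W(0,\cdot)\ge u_0$ satisfy $u(t,x) \le W(t,x)$ for a.e. $x$ and every $t \in [0,T]$; since $T>0$ is arbitrary this holds for all $t \ge 0$.

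Finally, the pointwise bound $0 \le u(t,x) \le \beta \1_{K(t)}(x)$ forces $u(t,\cdot) = 0$ almost everywhere outside $K(t)$, whence $\mbox{supp}\, u(t) \subset \overline{K(t)}$. As $K$ is compact and $\overline{\mathrm{dom}\, k}$ is closed and bounded by Proposition \ref{prop:Phinorm}, the Minkowski sum $K(t) = K \oplus t\,\overline{\mathrm{dom}\, k}$ is itself closed, so $\overline{K(t)}=K(t)$ and we obtain \eqref{eq:supsubset}. The step requiring the most care is really the licit use of the comparison principle between the given entropy solution and the constructed supersolution: one must make sure that the super-solution inequality verified in Proposition \ref{prop:super} is precisely the one against which the comparison (or $L^1$-contraction) result of \cite{Andreu2005elliptic,Andreu2005parabolic} is stated. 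Granting that compatibility, which the appendix framework is designed to provide, the remaining arguments are immediate and the corollary follows.
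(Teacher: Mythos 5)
Your proposal is correct and follows precisely the route the paper intends for this corollary (which it states without an explicit proof): comparison of the entropy solution $u$ with the supersolution $W(t,x)=\beta\,\1_{K(t)}$ of Proposition \ref{prop:super}, taking $\beta=\|u_0\|_\infty$ so that $u_0\le W(0,\cdot)$, and observing that $K(t)=K\oplus t\,\overline{\mathrm{dom}\,k}$ is closed. One small precision: the comparison result you invoke is Theorem \ref{theocomp} of the appendix, due to \cite{Andreu2006} rather than \cite{Andreu2005elliptic,Andreu2005parabolic}, and it indeed applies here because $W(t)=\beta\,\1_{K(t)}\in BV(\R^d)$ (the compact set $K(t)$ has finite perimeter) and Assumptions \ref{compas} hold in the present framework, as the paper verifies via $|h(z,\xi)|\le \|\psi\|_\infty\,|z|\,|\xi|$.
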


%%%%%%
\subsection{Subsolutions}
We aim to prove equality in \eqref{eq:supsubset} by means of finding suitable subsolutions. This can be achieved in certain cases of interest. To proceed, we state first a general result that reduces this task to analyze what happens inside the support of a given subsolution ansatz.
\begin{lemma}
\label{lm:paso1}
Let $W(t,x)$ be such that $W(0,\cdot)$ is compactly supported. Assume that $B=B(t):= \mbox{supp}\, W(t,\cdot) = \mbox{supp}\, W(0,\cdot)\oplus tK$, with $K$ a compact, convex subset of $\R^d$ that is symmetric with respect to the origin and has nonempty interior. Assume further that $W$ satisfies the regularity requirements in Definition \ref{subsuper} (a sufficient condition is to assume smoothness inside the support and continuity at the boundary) and there holds that $W(t,\cdot)|_{\partial B}=0$. Then, provided that
$$
W_t \le \div \a(W,\nabla W)\quad \mbox{a.e. in} \ B(t), \quad \mbox{for a.e.}\ t \in (0,T)
$$
we have that $W$ satisfies \eqref{seineq} for equation \eqref{lm:coherence}.
\end{lemma}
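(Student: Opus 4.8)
The plan is to verify the subsolution inequality \eqref{seineq} of Definition \ref{subsuper} by splitting it, exactly as in the proof of Proposition \ref{prop:super}, into its absolutely continuous and its singular parts, and then to show that the present hypotheses force the singular part to vanish identically, so that what survives is an absolutely continuous inequality which follows directly from the pointwise bound $W_t\le\div\,\a(W,\nabla W)$ inside $B(t)$. First I would record the geometric consequences of the assumptions: since $K$ is compact, convex, symmetric and has nonempty interior, the moving domain $B(t)=\mathrm{supp}\,W(0,\cdot)\oplus tK$ admits, for a.e.\ $t$, a well-defined outward normal $\nu^{\partial B(t)}$ and normal velocity, just as in Proposition \ref{prop:super}; this is what lets us speak of boundary contributions at all. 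The decisive structural difference from the supersolution case is that $W$ is now \emph{continuous} across the moving boundary: it is smooth in the interior of $B(t)$, it vanishes on $\partial B(t)$, and it is identically zero outside. Consequently $W$ carries no jump across $\partial B(t)$.

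The main step is the vanishing of the singular part. The singular terms entering \eqref{seineq} have the form $[h_S(W,DT(W))]^s$ and $[h_T(W,DS(W))]^s$, and by the computation leading to \eqref{eq:follow5} they are controlled by $|D^sJ_{(-S)I}(T_{a,b}(W))|$ and its companion. For any Lipschitz truncation the maps $W\mapsto T_{a,b}(W)$ and $W\mapsto J_{TS}(W)$ preserve continuity; since $W$ is continuous with trace zero on $\partial B(t)$, these compositions are continuous, so their distributional derivatives carry no jump part, while smoothness of $W$ in the interior and its constancy outside rule out any Cantor part. Hence $D^sJ_{(-S)I}(T_{a,b}(W))=0$ and every singular term in \eqref{seineq} vanishes. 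In the same way $\partial_t J_{TS}(W)$ has no measure concentrated on $\partial B(t)$: the boundary velocity couples to the jump of $J_{TS}(W)$ across $\partial B(t)$, which is null here, so the surface term that appeared in \eqref{eq:Wt} for the discontinuous ansatz is simply absent. This is precisely where the assumptions $W|_{\partial B(t)}=0$ and the continuous growth law for $B(t)$ are used.

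It then remains to establish the absolutely continuous part of \eqref{seineq}. Outside $\overline{B(t)}$ one has $W\equiv 0$, hence $\a(W,\nabla W)=0$ and every integrand vanishes, while $\partial B(t)$ is Lebesgue-null and contributes nothing. Inside $B(t)$, $W$ is a smooth Sobolev function, so multiplying the pointwise inequality $W_t\le\div\,\a(W,\nabla W)$ by the nonnegative entropy-truncation factors prescribed in the definition and integrating by parts---all legitimate by the interior smoothness---reproduces exactly the absolutely continuous part of the required inequality with the correct sign. Combining this with the vanishing of the singular part yields \eqref{seineq}, which is the assertion.

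I expect the genuine obstacle to be the rigorous bookkeeping of the time derivative across the moving boundary: one must confirm that, for a continuous $W$ vanishing on $\partial B(t)$, the distributional $\partial_t J_{TS}(W)$ coincides with its pointwise a.c.\ value and produces no surface measure, i.e.\ that continuity really does annihilate the transport term from Reynolds' theorem that was nontrivial in Proposition \ref{prop:super}. Verifying this, together with checking that the chosen entropy-truncation pairs interact with the interior inequality so as to deliver the correct sign, is the technical heart of the argument; the geometric and convex-analytic ingredients are, by contrast, routine given Proposition \ref{prop:super}.
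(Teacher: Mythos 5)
Your route is the paper's own: split \eqref{seineq} into singular and absolutely continuous contributions, observe that continuity of $W$ across $\partial B(t)$ (smooth inside, zero trace, zero outside) kills every singular spatial term and every surface measure in $\partial_t J_{TS}(W)$, and reduce the a.c.\ part to the pointwise inequality $W_t \le \div\,\a(W,\nabla W)$ via the identity $h_S(W,DT(W))^{ac}+h_T(W,DS(W))^{ac}=\a(W,\nabla W)\cdot \nabla\bigl(S(W)T(W)\bigr)$. However, there is one point where your justification is genuinely too weak: the surface term produced by the spatial integration by parts, namely $\int_0^T\int_{\partial B}[\z\, T(W)S(W)\cdot \nu^B]\phi\, d\H^{d-1}\,dt$ in \eqref{t3}. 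You dismiss boundary contributions on the grounds that $W$ is continuous with zero boundary values and that $\partial B(t)$ is Lebesgue-null, but this is a \emph{surface} integral, and continuity of $W$ alone does not force the trace of $\z=\a(W,\nabla W)$ to vanish there: for the explicit subsolutions this lemma is built for (Proposition \ref{prop:sub}) one has $|\nabla W|\to\infty$ as $x\to\partial B(t)$, so $\a(W,\nabla W)$ is not a priori small merely because $W$ is. The paper kills this term with the flux-saturation estimate $|\z|\le \|\psi\|_\infty\,|W|$ --- available because $\psi$ is bounded, by Step 2 of the proof of Lemma \ref{lm:coherence} --- which, combined with $W|_{\partial B(t)}=0$, annihilates the boundary flux. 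Without invoking this structural bound your argument has a hole precisely at the degenerate boundary behavior the lemma is designed to handle.

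A second, smaller slip: for the subsolution inequality \eqref{seineq} the truncations are $T\in\mathcal{T}^+$ and $S\in\mathcal{T}^-$, so the factor $T(W)S(W)$ is \emph{nonpositive}, not ``nonnegative'' as you write; it is exactly this nonpositivity that turns the pointwise bound $(\div\,\z - W_t)\ge 0$ into $(\div\,\z - W_t)\,T(W)S(W)\le 0$, i.e.\ into the $\ge$ direction of \eqref{seineq}. You correctly flag the sign bookkeeping as the technical heart, but as stated your sign would reverse the conclusion if executed literally. With these two repairs --- the saturation bound for the boundary flux and the corrected sign of $T(W)S(W)$ --- your argument coincides with the paper's proof, including the no-surface-measure identity $\partial_t J_{TS}(W)=\partial_t W\, T(W)S(W)\,\1_B$ of \eqref{t2}.
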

\begin{proof} 
Let $0 \le \phi \in \mathcal{D}(Q_T)$, $T \in \mathcal{T}^+$ and $S\in \mathcal{T}^-$. We compute each term in \eqref{seineq} of Definition \ref{subsuper} separately. First, note that the spatial derivative has no singular part. Then we compute
\begin{equation}
\label{t2}
\partial_t J_{TS}(W) = \partial_tW \, T(W) S(W) \1_B. 
\end{equation}
Moreover, letting $\z = \a(W,\nabla W)$, 
\begin{equation}
\label{t3}
\begin{array}{ll}
\displaystyle \int_{Q_T} \z \nabla \phi T(W)S(W) \ dxdt = & \displaystyle -\int_{Q_T}  \phi\,  \div (\z T(W) S(W)) \ dxdt
\\ \\
& \displaystyle +\int_0^T \int_{\partial B} [\z T(W)S(W)\cdot \nu^B]\phi\,  d \H^{d-1}\ dt.
\end{array}
\end{equation}
We claim that the boundary term above vanishes; this is due to the estimate $|\z|\le |W| \, \|\psi\|_\infty$ -recall that $\psi$ is a $C^1$ vector field whose derivatives decay at infinity. Thus, \eqref{seineq} actually reads as follows:
\begin{equation}
\label{eq:med}
\begin{array}{l}
\displaystyle \int_{Q_T} \left[h_S(W,DT(W))^{ac} + h_T(W,DS(W))^{ac} \right] \phi \ dt
\\
\displaystyle \ge  \int_{Q_T}  \phi\,  \div (\z T(W) S(W)) \ dxdt - \int_0^T \int_B \phi \partial_tW\, T(W) S(W) \ dxdt.
 \end{array}
\end{equation}
We want to show that \eqref{eq:med} holds true under the conditions stated in the Lemma. To prove this, we compute
\[
\begin{split}
h_S(W,DT(W))^{ac} & = S(W) h(W,\nabla T(W)) 
\\
& = S(W) \nabla T(W) \a(T(W),\nabla T(W)) = S(W) \nabla T(W) \a(W,\nabla W)
\end{split}
\]
(and in the same way for the other term); then we have that
$$
h_S(W,DT(W))^{ac} + h_T(W,DS(W))^{ac} = \a(W,\nabla W) \nabla (S(W)T(W)).
$$
Thus, \eqref{eq:med} reduces to
$$
\int_{Q_T}  \phi T(W) S(W) \div \z \ dxdt - \int_{Q_T}  \phi \partial_t W\,  T(W) S(W) \ dxdt \le 0.
$$
This proves the Lemma.
\end{proof}

We are able to complete the previous program for the subclass of equations described by Lemma \ref{lm:example}:
\begin{proposition}
\label{prop:sub}
Let $R_0>0$. Under the assumptions of Lemma \ref{lm:example} and the additional assumption
\begin{equation}
\label{eq:extralimit}
\lim_{z\to +\infty} z^{3/2}g'(z)=-\alpha \in [-1/2,0)\:,
\end{equation}
there exists some $a>0$ (depending on $d,\alpha, \|g\|_\infty$ and $\|g'\|_\infty$) such that
$$
W(t,x)= e^{-at} \sqrt{R(t)^2-\|x\|_{A^{-1}}^2} \1_{\{R(t)\bar B_{A^{-1}}\}}\quad \mbox{with}\, R(t):= R_0+t
$$
 is a subsolution of \eqref{eq:anisotropic_template3}.
\end{proposition}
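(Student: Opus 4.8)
The plan is to reduce the claim to a pointwise differential inequality inside the support by invoking Lemma \ref{lm:paso1}, and then to verify that inequality by an explicit radial computation; the delicate point will be the matching of the singular terms at the free boundary. First I would check that $W$ meets the hypotheses of Lemma \ref{lm:paso1}. Its support at time $t$ is $R(t)\bar B_{A^{-1}}=R_0\bar B_{A^{-1}}\oplus t\,\bar B_{A^{-1}}$, so $K=\bar B_{A^{-1}}$ is compact, convex, symmetric about the origin and has nonempty interior; moreover $W$ is smooth on the open support $\{\|x\|_{A^{-1}}<R(t)\}$ (since $g\in C^1$ and the argument of $g$ is finite there) and vanishes continuously on $\partial B(t)$. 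Hence it suffices to establish $W_t\le \div \a(W,\nabla W)$ a.e. in the interior.

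Next I would carry out the computation in the radial variable $y:=\|x\|_{A^{-1}}^2=xA^{-1}x^T$, writing $\rho:=\sqrt{R(t)^2-y}$ so that $W=e^{-at}\rho$. Direct differentiation gives $W_t=e^{-at}(R/\rho-a\rho)$ (using $R'(t)=1$) and $\nabla W/W=-\rho^{-2}A^{-1}x^T$, whence $\|\nabla W/W\|_A^2=y/\rho^4$, so the argument of $g$ is $w:=y/(2\rho^4)$. The crucial algebraic simplification is that the matrix $A$ cancels in the flux: $\a(W,\nabla W)=g(w)\,A\nabla W=-e^{-at}\rho^{-1}g(w)\,x^T$. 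Since the scalar prefactor $Q(y):=\rho^{-1}g(w)$ depends on $x$ only through the degree-two homogeneous function $y$, Euler's identity $x\cdot\nabla y=2y$ yields the compact expression $\div \a(W,\nabla W)=-e^{-at}\bigl(2y\,Q'(y)+d\,Q(y)\bigr)$. After dividing by $e^{-at}$, the inequality to be proved becomes a one-variable statement in $y\in[0,R(t)^2)$, with $t$ entering only through $R(t)\ge R_0$.

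The hard part is the behavior near the free boundary, where $\rho\to 0$, $w\to\infty$, and both $W_t$ and $\div \a$ blow up like $\rho^{-1}$. Here I would insert the asymptotics supplied by the two decay hypotheses: condition \eqref{eq:newdecay} gives $g(w)\sim (2w)^{-1/2}=\rho^2/\sqrt y$, while the enhanced condition \eqref{eq:extralimit} gives $g'(w)\sim-\alpha\,w^{-3/2}$. These two requirements are compatible only when $2\alpha=1/\sqrt2$ (integrating the asymptotics of $g'$ recovers $\sqrt w\,g(w)\to 2\alpha$), and with this value the $\rho^{-1}$ contributions of $2yQ'+dQ$ collapse from $\frac{1}{\rho\sqrt y}\bigl(y-2\sqrt2\,\alpha(R^2+y)\bigr)$ to $-R^2/(\rho\sqrt y)$ up to bounded corrections. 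Consequently the singular part of $\div \a$ is $e^{-at}R^2/(\rho\sqrt y)$; since $y<R(t)^2$ strictly in the interior one has $R^2/\sqrt y>R$, so this singularity dominates the $e^{-at}R/\rho$ singularity of $W_t$, the leftover difference being only of order $\rho$ (indeed $R-\sqrt y\sim\rho^2/(2R)$).

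Finally I would absorb the remaining $O(\rho)$ and $O(1)$ terms — controlled uniformly by $d$, $\alpha$, $\|g\|_\infty$ and $\|g'\|_\infty$ — using the favorable negative term $-a\rho\,e^{-at}$ in $W_t$: dividing the residual inequality by $\rho$ reduces it to $-a\le \tfrac{1}{2R}-\tfrac{d}{\sqrt y}+o(1)$, whose right-hand side is bounded, so a sufficiently large $a$ settles the near-boundary region; in the bulk, where $\rho$ is bounded below, the same term makes $W_t$ very negative while $\div \a$ stays bounded. The main obstacle is precisely making this near-boundary cancellation rigorous and \emph{uniform} in $t\in(0,T)$: one must control the errors in the asymptotic expansions of $g$ and $g'$ coming from \eqref{eq:newdecay}–\eqref{eq:extralimit} and verify that the supremum defining the admissible constant $a$ is finite. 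It is worth emphasizing that $a$ can only dominate the subprincipal terms; it cannot repair a wrong leading singularity, which is exactly why the enhanced decay condition \eqref{eq:extralimit} is indispensable at this step.
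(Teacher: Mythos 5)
Your overall strategy is the same as the paper's: reduce to a pointwise inequality inside the support via Lemma \ref{lm:paso1}, compute everything explicitly in the radial variable, and absorb the subprincipal terms with a large $a$; your $Q(y)$/Euler-identity packaging is just a tidier bookkeeping of the paper's componentwise computation, and I have checked that all your formulas ($W_t$, $w=y/(2\rho^4)$, the cancellation of $A$ in the flux, $\div\,\a=-e^{-at}(2yQ'+dQ)$) are correct. In fact your arithmetic is \emph{more} careful than the paper's: the paper's proof computes $\div(A\nabla W)$ with numerator $(d+1)\|x\|_{A^{-1}}^2-dR^2$ where the correct value is $(d-1)\|x\|_{A^{-1}}^2-dR^2$, and carries a stray $\sqrt2$ into the first term of \eqref{eq:no2page}; these slips make the bracket in \eqref{eq:no2page} tend to the strictly positive value $2^{5/2}\alpha$, so the paper concludes instantly. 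Your observation that \eqref{eq:newdecay} and \eqref{eq:extralimit} jointly force $\alpha=2^{-3/2}$ is correct (integrate the asymptotics of $g'$, using $g(\infty)=0$), and with the corrected arithmetic the limiting bracket is $-1-1+2^{5/2}\alpha=0$: the two $\rho^{-1}$ singularities cancel \emph{exactly}, as you say, rather than leaving a positive margin.

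Precisely because of this exact cancellation, however, your final absorption step does not close, and this is a genuine gap rather than the routine uniformity check you describe. Write $\sqrt{w}\,g(w)=2^{-1/2}(1+\epsilon_1(w))$ and $w^{3/2}g'(w)=-\alpha(1+\epsilon_2(w))$. After the leading terms cancel, the residual inequality contains the term $\bigl((R^2+y)\epsilon_2(w)-y\,\epsilon_1(w)\bigr)/(\rho\sqrt{y})$, so after dividing by $\rho$ your ``$o(1)$'' is really of size $\epsilon_i(w)/\rho^2\sim\sqrt{w}\,\epsilon_i(w)$. The hypotheses \eqref{eq:newdecay}--\eqref{eq:extralimit} assert only that $\epsilon_i(w)\to 0$, with no rate; if, say, $\epsilon_2(w)\sim 1/\log w$, this contribution blows up as $\rho\to 0$ and no finite $a$ (in particular none depending only on $d,\alpha,\|g\|_\infty,\|g'\|_\infty$) can dominate it. So the step cannot be repaired by ``taking $a$ large'': one needs either a quantitative rate $\epsilon_i(w)=O(w^{-1/2})$ — which does hold for the concrete models \eqref{eq:pmodel}, but is not among the stated assumptions — or a structural modification, e.g.\ slowing the front to $R(t)=R_0+ct$ with $c<1$, which restores a favorable singular margin of order $(1-c)R/\rho$ that dominates $\epsilon_i(w)/\rho$ using only $\epsilon_i\to 0$, and still yields Corollary \ref{cor:47} upon letting $c\uparrow 1$. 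To be fair, the paper's own proof has the same unresolved issue, masked by the compensating slips noted above; but since you correctly exposed the exact cancellation, the burden of supplying the missing rate (or the $c<1$ device) falls squarely on your closing paragraph, and as written it is not met.
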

\begin{remark}
Condition \eqref{eq:extralimit} amounts to the additional assumption of Proposition 6.5 in \cite{Calvo2015} -note however that the formulation there is slightly different as $g$ is a function of the norm, whereas here $g$ is a function of the norm squared. We also point out that, thanks to \eqref{eq:newdecay} we have that
$$
|z^{3/2}g'(z)|\le \frac{1}{2} |\sqrt{z}g(z)| \quad \mbox{and hence} \quad \lim_{z\to +\infty} z^{3/2}|g'(z)|\le \frac{1}{2^{3/2}}.
$$
The equality in the former limit is fulfilled by every model of the form \eqref{eq:pmodel}. 
\end{remark}
\begin{proof}
Thanks to Lemma \ref{lm:paso1} it suffices to carry the computations within the support. We readily compute
$$
 \nabla W = -\frac{e^{-at} A^{-1}x^T}{\sqrt{R(t)^2-\|x\|_{A^{-1}}^2} },\quad \frac{\nabla W}{W} = -\frac{A^{-1}x^T}{R(t)^2-\|x\|_{A^{-1}}^2 }.
$$
Now, recall that $\|x\|_A^2=x A x^T$ for row vectors and likewise $\|x\|_A^2=x^TA x$ for column vectors. In this fashion, it is easy to see that 
$$
\|A^{-1}x^T \|_A= \|x\|_{A^{-1}}\quad \mbox{and hence}\quad \left\|\frac{\nabla W}{W}\right\|_A = \frac{\|x\|_{A^{-1}}}{R(t)^2-\|x\|_{A^{-1}}^2 }.
$$
Thanks to Lemma \ref{lm:paso1}, if we check \eqref{seineq} we are done. Under the structure given by \eqref{eq:Apsi}, we have to check whether 
$$
W_t \le \div \left( g\left(\frac{1}{2}\left\|\frac{\nabla W}{W} \right\|_A^2\right) A\nabla W\right)
$$
holds true a.e. within the support of $W$. Therefore, we are to check that
\begin{equation}
\label{eq:2page}
\begin{array}{r}
\displaystyle -a W + \frac{ e^{-at}R(t)}{\sqrt{R(t)^2-\|x\|_{A^{-1}}^2}} \le \div (A \cdot \nabla W) g\left(\frac{\|x\|_{A^{-1}}^2/2}{(R(t)^2-\|x\|_{A^{-1}}^2)^2} \right)
\\ \\
\displaystyle + (A \nabla W)\cdot \nabla \left(\left\|\frac{\nabla W}{W}\right\|_A \right) \left\|\frac{\nabla W}{W}\right\|_A g'\left(\frac{\|x\|_{A^{-1}}^2/2}{(R(t)^2-\|x\|_{A^{-1}}^2)^2} \right)
\end{array}
\end{equation}
holds inside the support of $W$. Let us compute the terms on the right hand side of \eqref{eq:2page} in turn. First,
$$
A \nabla W = - \frac{e^{-at}x^T}{\sqrt{R(t)^2-\|x\|_{A^{-1}}^2}}
$$
and in such a way
$$
\div (A \nabla W ) = \frac{e^{-at}\left((d+1)\|x\|_{A^{-1}}^2 - dR(t)^2 \right)}{(R(t)^2-\|x\|_{A^{-1}}^2)^{3/2}}
$$
where we recall that $d$ is the spatial dimension. Now,
$$
\nabla \left(\left\|\frac{\nabla W}{W}\right\|_A \right)  = \frac{A^{-1}x^T}{R(t)^2-\|x\|_{A^{-1}}^2} \left(\frac{1}{\|x\|_{A^{-1}}}+\frac{2\|x\|_{A^{-1}}}{R(t)^2-\|x\|_{A^{-1}}^2} \right)
$$
and therefore
$$
(A \nabla W) \cdot \nabla \left(\left\|\frac{\nabla W}{W}\right\|_A \right)  = -\frac{e^{-at}(\|x\|_{A^{-1}}[R(t)^2+2]-\|x\|_{A^{-1}}^3)}{[R(t)^2-\|x\|_{A^{-1}}^2]^{5/2}}.
$$
Putting all the contributions together and rearranging a bit we get the following, equi\-valent inequality:
$$
\begin{array}{r}
\displaystyle -a \le -\frac{\sqrt{2}R(t)}{R(t)^2-\|x\|_{A^{-1}}^2} + g\left(\frac{\|x\|_{A^{-1}}^2/2}{(R(t)^2-\|x\|_{A^{-1}}^2)^2} \right) \frac{(d+1)\|x \|_{A^{-1}}^2-d R(t)^2}{\left(R(t)^2-\|x\|_{A^{-1}}^2 \right)^2}
\\ \\
\displaystyle - g'\left(\frac{\|x\|_{A^{-1}}^2/2}{(R(t)^2-\|x\|_{A^{-1}}^2)^2} \right) \frac{\|x \|_{A^{-1}}^2[R(t)^2-\|x\|_{A^{-1}}^2]+2 \|x \|_{A^{-1}}^2}{\left(R(t)^2-\|x\|_{A^{-1}}^2 \right)^4}\:.
\end{array}
$$
The former inequality depends on $x\in \mbox{supp}\, W(t)$ only through $\|x\|_{A^{-1}}$. For each such $x$ we can find $\lambda \in [0,1)$ such that $\|x\|_{A^{-1}}^2=\lambda R(t)^2$. We can rewrite the above inequality in terms of $\lambda$:
\begin{equation}
\label{eq:no2page}
\begin{array}{r}
\displaystyle - a \le \frac{1}{(1-\lambda)R(t)} \left\{-\sqrt{2} + \frac{(d+1)\lambda-d}{R(t)(1-\lambda)}g\left(\frac{\lambda/2}{(1-\lambda)^2R(t)^2} \right) \right.
\\ \\
\displaystyle \left. -\frac{(2+R(t)^2)\lambda -\lambda^2R(t)^2}{(1-\lambda)^3R(t)^5}g'\left(\frac{\lambda/2}{(1-\lambda)^2R(t)^2} \right) \right\}\:.
\end{array}
\end{equation}
Since $a$ is finite but can be taken as large as needed, we just have to check that the limit of the right hand side of \eqref{eq:no2page} when $\lambda \to 1$ does not diverge to minus infinity. Note that
$$
\lim_{\lambda \to 1} \frac{(d+1)\lambda-d}{\sqrt{\lambda/2}} \frac{\sqrt{\lambda/2}}{R(t)(1-\lambda)}g\left(\frac{\lambda/2}{(1-\lambda)^2R(t)^2} \right)=\sqrt{2}
$$
thanks to \eqref{eq:newdecay}. 
Moreover, using \eqref{eq:extralimit} we can study the limit
$$
\lim_{\lambda \to 1} -\frac{(2+R(t)^2)\lambda -\lambda^2R(t)^2}{(\lambda/2)^{3/2}R(t)^2}
 \frac{(\lambda/2)^{3/2}}{(1-\lambda)^3R(t)^3}g'\left(\frac{\lambda/2}{(1-\lambda)^2R(t)^2} \right) =2^{5/2}\alpha>0. 
$$
This suffices to ensure that we can fulfill the inequality for $\lambda$ in a neighborhood of $\lambda =1$ just by taking $a$ to be large enough, which ends the proof.
\end{proof}

\begin{corollary}
\label{cor:47}
 Let $0\le u_0\in L^\infty(\R^d)$ be a compactly supported initial datum whose support is the closure of its interior. Let $\mbox{supp}\, u_0$ be either convex or such that its boundary is of class $C^2$. Assume that
 \begin{equation}
 \label{ass:F}
 \begin{array}{l}
 \mbox{for any closed ball}\, F\, \mbox{of the norm}\, \|\cdot \|_{A^{-1}}\, \mbox{which is contained in}\, {\rm int}( \mbox{supp}\, u_0),
 \\
  \mbox{there is a constant}\,  \alpha_F>0\, \mbox{such that}\, \alpha_F \le u_0\, \mbox{in}\, F.
  \end{array}
 \end{equation}
 Let $u(t)$ be the entropy solution of \eqref{eq:anisotropic_template3} with initial datum $u_0$. Then there holds that $\mbox{supp}\, u(t) = \mbox{supp}\, u_0 \oplus t\bar B_{A^{-1}}$.
\end{corollary}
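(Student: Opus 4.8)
The plan is to establish the two inclusions $\mbox{supp}\, u(t) \subseteq \mbox{supp}\, u_0 \oplus t\bar B_{A^{-1}}$ and $\mbox{supp}\, u_0 \oplus t\bar B_{A^{-1}} \subseteq \mbox{supp}\, u(t)$ separately. First I would record that the decay condition \eqref{eq:newdecay} forces $\ell=1$ in Example \ref{ex:matrix_cost} (equivalently, by the first assertion of Proposition \ref{pr:1}), so that $\overline{\mathrm{dom}\, k}=\bar B_{A^{-1}}$ for equation \eqref{eq:anisotropic_template3}. The upper inclusion is then immediate: \eqref{eq:anisotropic_template3} is the instance $L=1$ of \eqref{eq:anisotropic_template}, and since $\mbox{supp}\, u_0$ is convex or has a $C^2$ boundary by hypothesis, Corollary \ref{cr:sc} applied with $K=\mbox{supp}\, u_0$ gives $\mbox{supp}\, u(t)\subseteq \mbox{supp}\, u_0\oplus t\,\overline{\mathrm{dom}\, k}=\mbox{supp}\, u_0\oplus t\bar B_{A^{-1}}$.

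For the reverse inclusion I would exploit two invariances of \eqref{eq:anisotropic_template3}. The equation is autonomous with constant coefficients, so spatial translates of subsolutions are again subsolutions; and since the flux obeys $\a(cz,c\xi)=c\,\a(z,\xi)$ for $c>0$ (the nonlinearity depends on $(u,\nabla u)$ only through the scale-invariant ratio $\nabla u/u$ together with the degree-one factor $A\nabla u$), a positive constant multiple of a subsolution is again a subsolution. Now fix $p\in\mathrm{int}(\mbox{supp}\, u_0)$ and choose $\varepsilon>0$ so small that the closed $\|\cdot\|_{A^{-1}}$-ball $F:=p+\varepsilon\bar B_{A^{-1}}$ lies in $\mathrm{int}(\mbox{supp}\, u_0)$; by the nondegeneracy assumption \eqref{ass:F} there is $\alpha_F>0$ with $u_0\ge \alpha_F$ on $F$. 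Let $W$ be the subsolution furnished by Proposition \ref{prop:sub} with $R_0=\varepsilon$, translated to be centered at $p$, and rescaled by the constant $c:=\alpha_F/\varepsilon$. Then $cW(0,\cdot)$ is supported on $F$, vanishes on $\partial F$, and satisfies $cW(0,\cdot)\le \alpha_F\le u_0$ there (and trivially $cW(0,\cdot)=0\le u_0$ outside $F$), hence $cW(0,\cdot)\le u_0$. Invoking the $L^1$-comparison principle for entropy sub- and supersolutions from \cite{Andreu2005elliptic,Andreu2005parabolic}, the ordering propagates, $cW(t,\cdot)\le u(t,\cdot)$ a.e.; since $cW(t,\cdot)>0$ on the interior of $p+(\varepsilon+t)\bar B_{A^{-1}}$, closedness of the support yields $p+(\varepsilon+t)\bar B_{A^{-1}}\subseteq \mbox{supp}\, u(t)$, and a fortiori $p+t\bar B_{A^{-1}}\subseteq \mbox{supp}\, u(t)$.

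It then remains to assemble these local lower bounds. Taking the union over all $p\in\mathrm{int}(\mbox{supp}\, u_0)$ gives $\mathrm{int}(\mbox{supp}\, u_0)\oplus t\bar B_{A^{-1}}\subseteq \mbox{supp}\, u(t)$; since $\mbox{supp}\, u(t)$ is closed and $\bar B_{A^{-1}}$ is compact, passing to closures and using the hypothesis that $\mbox{supp}\, u_0$ is the closure of its interior yields $\mbox{supp}\, u_0\oplus t\bar B_{A^{-1}}=\overline{\mathrm{int}(\mbox{supp}\, u_0)}\oplus t\bar B_{A^{-1}}\subseteq\overline{\mathrm{int}(\mbox{supp}\, u_0)\oplus t\bar B_{A^{-1}}}\subseteq\mbox{supp}\, u(t)$, which together with the upper inclusion gives the claimed equality. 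I expect the main obstacle to be analytic rather than geometric: one must make the comparison step fully rigorous within the entropy-solution framework, verifying that the translated and rescaled $W$ is an admissible subsolution in the precise sense of the appendix (Proposition \ref{prop:sub} and Lemma \ref{lm:paso1} only certify the differential inequality inside the support) and that the ordering principle genuinely applies to the pair $(cW,u)$. The boundary gluing is comparatively routine, relying only on the \emph{closure of interior} hypothesis to recover boundary points of $\mbox{supp}\, u_0$ as limits of admissible interior centers.
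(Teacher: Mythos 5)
Your proof is correct and follows essentially the same route as the paper's: the upper inclusion via Corollary \ref{cr:sc} (built on the supersolutions of Proposition \ref{prop:super}), and the lower inclusion by translating and positively rescaling the subsolutions of Proposition \ref{prop:sub} (legitimate since $\a(cz,c\xi)=c\,\a(z,\xi)$), placing them below $u_0$ at interior centers via \eqref{ass:F}, and propagating the ordering by comparison before taking unions and closures. One minor attribution slip: the comparison principle you need for the pair $(cW,u)$ is Theorem \ref{theocomp}, which the paper takes from \cite{Andreu2006}, not from \cite{Andreu2005elliptic,Andreu2005parabolic}.
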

\begin{proof}
This follows by comparison using Propositions \ref{prop:super} and \ref{prop:sub}. Note that any positive multiple of the subsolutions in Proposition \ref{prop:sub} is again a subsolution. We use condition \eqref{ass:F} in order to be able to place these subsolutions below $u_0$ and centered about points $x\in {\rm int} ({supp}\, u_0)$ arbitrarily close to the boundary. This ensures that  we can find some $\epsilon>0$ such that $x+\epsilon B_{A^{-1}} \subset {supp}\, u_0$. 
\end{proof}

%%%%%%%%%
\section{Propagation of jump discontinuities}
\label{sec:RH}
The aim of this section is to describe the time evolution of jump discontinuities, generalizing what was already shown in \cite{Caselles2011,Calvo2015}. We show that jump hypersurfaces described by a normal vector $\nu$ propagate along that vector with speed $\Phi_\infty(\nu)$, see Proposition \ref{pr:8.1} below.

To proceed we will follow closely the framework introduced by \cite{Caselles2011}. Note that this functional framework demands $u \in BV(Q_T)$; to comply with this requirement we must have that $u_t$ is a Radon measure, which is a delicate issue from the technical point of view (see e.g. the discussion in \cite{Caselles2011}, sufficient conditions are given for the case of \eqref{eq:rhe0}). This falls out of the scope of the present document; here we will just take for granted that we work with solutions having the appropriate time regularity. 

Let us show that the entropy inequalities \eqref{eineq} can be decomposed in terms of jump and Cantor parts, in the same philosophy of \cite{Caselles2011}.
\begin{proposition}
\label{pr:7.1}
Let $u \in C([0,T];L^1(\R^d))\cap BV_{loc}(Q_T)$. Assume that $u_t=\div \z$ in $\mathcal{D}'(Q_T)$, where $\z = \a(u,\nabla u)$. Assume also that $u_t(t)$ is a Radon measure for a.e. $t>0$. Let $I:\R \rightarrow \R$ be the identity function $I(z)=z$. Then $u$ is an entropy solution of \eqref{eq:anisotropic_template} if and only if for any $(T,S) \in \mathcal{TSUB}$ (for any $(T,S) \in \mathcal{TSUB}\cup \mathcal{TSUPER}$) we have
$$
h_S(u,DT(u))^c+h_T(u,DS(u))^c \le (\z(t,x)\cdot D(T(u)S(u)))^c
$$
and for almost any $t>0$ the inequality
\begin{equation}
\begin{array}{c}
\label{entcondsimple}
\displaystyle \Phi_\infty(\nu^{J_{u(t)}})\{[(STI)(u(t))]_{+-}  \displaystyle- \ [J_{TS}(u(t))]_{+-} \}
\\ \\
\quad \quad \quad \quad \quad \quad  \displaystyle \le - \v [J_{TS}(u(t))]_{+-} + [[\z(t)\cdot \nu^{J_{u(t)}} ]T(u(t))S(u(t))]_{+-}
\end{array}
\end{equation}
holds $\mathcal{H}^{d-1}$-a.e. on $J_{u(t)}$.
\end{proposition}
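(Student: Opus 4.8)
The plan is to start from the measure-valued entropy inequality \eqref{eineq} and to exploit the fact that, since $u\in BV_{loc}(Q_T)$, every measure appearing in it splits into absolutely continuous, Cantor and jump parts according to the Radon--Nikodym decomposition of the space--time derivative $D_{(t,x)}u=\nabla u\,\LL^{d+1}+D^cu+D^ju$. These three parts are carried by mutually singular sets, so a single inequality between Radon measures is equivalent to the conjunction of three separate inequalities, one for each part. First I would record, using the chain rule for $BV$ functions together with the structure of the Anzellotti-type pairings that define $h_S(u,DT(u))$, $h_T(u,DS(u))$ and $\z\cdot D(T(u)S(u))$ (as in \cite{Andreu2005parabolic,Caselles2011}), how each side of \eqref{eineq} decomposes. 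The outcome is that being an entropy solution is equivalent to the simultaneous validity of an absolutely continuous inequality, a Cantor inequality and a jump inequality.

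Next I would dispose of the absolutely continuous part. Exactly as in the computation performed in the proof of Lemma \ref{lm:paso1}, one has $h_S(u,DT(u))^{ac}+h_T(u,DS(u))^{ac}=\a(u,\nabla u)\cdot\nabla(T(u)S(u))=(\z\cdot D(T(u)S(u)))^{ac}$, so the absolutely continuous part is in fact an algebraic identity and is automatically satisfied by any weak solution of \eqref{eq:anisotropic_template}. Consequently it imposes no constraint, and the measure inequality reduces precisely to its Cantor and jump parts. The Cantor part is, verbatim, the first displayed inequality in the statement, and isolating it requires no further work. This already yields the ``if and only if'': the entropy inequality holds iff both the Cantor inequality and the jump inequality hold.

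The core of the argument is the jump part. Here I would compute the jump contribution of each measure on the space--time jump set $J_u$, a $d$-rectifiable subset of $Q_T$ with space--time unit normal $(\nu_t,\nu_x)$. Using the anisotropic analog of the identity \eqref{eq:follow5} (obtained following \cite{Andreu2006}), the singular parts of $h_S(u,DT(u))$ and $h_T(u,DS(u))$ produce the factor $\Phi_\infty(\nu^{J_{u(t)}})$ multiplying the jump brackets $[(STI)(u)]_{+-}$ and $[J_{TS}(u)]_{+-}$, while the right-hand side produces $[[\z\cdot\nu^{J_{u(t)}}]T(u)S(u)]_{+-}$. The time derivative enters through the jump part of $\partial_t J_{TS}(u)$, which contributes the term $-\v\,[J_{TS}(u)]_{+-}$, the normal speed appearing (up to orientation) as $\v=-\nu_t/|\nu_x|$. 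Here the hypothesis $u_t=\div\z$ is used in the form of the Rankine--Hugoniot relation across $J_u$, namely $[u]\nu_t=[\z]\cdot\nu_x$, which expresses that the space--time field $(u,-\z)$ is divergence free; this guarantees that the space--time normal is well-defined and links the time-slicing to the spatial fronts. Finally I would slice $J_u$ in time: for a.e.\ $t$ the slice of $J_u$ agrees $\H^{d-1}$-a.e.\ with $J_{u(t)}$, with spatial normal $\nu^{J_{u(t)}}=\nu_x/|\nu_x|$, and disintegrating the jump-part measure inequality along this slicing turns it into the pointwise inequality \eqref{entcondsimple}, valid $\H^{d-1}$-a.e.\ on $J_{u(t)}$ for a.e.\ $t$.

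The main obstacle is precisely this last step: passing from a global inequality between measures on the space--time jump set to a pointwise $\H^{d-1}$-a.e.\ statement on each time slice. This requires the delicate slicing theory for $BV$ functions, to ensure that $J_{u(t)}$ is for a.e.\ $t$ the slice of $J_u$, that the one-sided traces $u^\pm(t)$ match the slices of the space--time traces, and that the coarea factor $|\nu_x|$ cancels correctly so that $\v$ is well-defined $\H^{d-1}$-a.e.; the assumption that $u_t(t)$ is a Radon measure for a.e.\ $t$ is exactly what makes $\partial_t J_{TS}(u)$ and its jump contribution meaningful. A secondary technical point is verifying the anisotropic jump formula for the pairings, i.e.\ that the recession function $\Phi_\infty$ (rather than the Euclidean modulus of the isotropic theory) is the correct surface density; this is where Assumptions \ref{ass:1} and the identity $\Phi_\infty=\sigma_{\overline{\mathrm{dom}\,k}}$ are invoked.
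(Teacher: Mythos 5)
Your proposal is correct and follows essentially the same route as the paper: the paper's proof simply defers the decomposition into absolutely continuous, Cantor and jump parts, the time-slicing of $J_u$, and the use of the Rankine--Hugoniot relation to Proposition 7.1 of \cite{Caselles2011}, and its only new content is exactly the step you single out, namely the computation of the jump parts of $h_S(u,DT(u))$ and $h_T(u,DS(u))$ (via the analogue of \eqref{eq:follow5}, as in Proposition \ref{prop:super}), which yields the anisotropic surface density $\Phi_\infty\left(\nu^{J_{u(t)}}\right)$ in place of the isotropic one. The only cosmetic slip is attributing that density to the identity $\Phi_\infty=\sigma_{\overline{\mathrm{dom}\,k}}$, whereas it comes directly from the recession function $h_\infty(z,\xi)=z|\xi|\Phi_\infty(\theta_\xi)$ of Definition \ref{df:list}, a consequence of Assumptions \ref{ass:1} via \eqref{eq:psi_infty}.
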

\begin{proof}
This is essentially the same proof as that of Proposition 7.1 in \cite{Caselles2011}. The only change needed pertains the computation of the jump parts for the measures $h_S, \, h_T$. This is similar to what we did in the proof of Proposition \ref{prop:super}. Actually, we have
$$
h_S(u(t),DT(u(t)))^j =  \Phi_\infty \left(\frac{DT(u(t))}{|DT(u(t))|}\right) [J_{TIS'}(u(t))]_{+-} \H_{J_{u(t)}}^{d-1}\, dt
$$
for any $(T,S) \in \mathcal{TSUB}$
 and similarly for $h_T(u(t),DS(u(t)))^j$. Since this measure is concentrated at $J_{u(t)}$, we have that $DT(u(t))=D^jT(u(t))$ on this set. Then we can argue as in the proof of Proposition \ref{prop:super} to deduce that $\frac{DT(u(t))}{|DT(u(t))|}=\nu^{J_{u(t)}}$ at $J_{u(t)}$. In this fashion we obtain that
$$
h_S(u(t),DT(u(t)))^j =  \Phi_\infty \left(\nu^{J_{u(t)}}\right) [J_{TIS'}(u(t))]_{+-} \H_{J_{u(t)}}^{d-1}\, dt
$$
and
$$
h_T(u(t),DS(u(t)))^j =  \Phi_\infty \left(\nu^{J_{u(t)}}\right) [J_{SIT'}(u(t))]_{+-} \H_{J_{u(t)}}^{d-1}\, dt\:.
$$
This is enough to readapt the arguments in Proposition 7.1, \cite{Caselles2011}.
\end{proof}
The central result of this section characterizes the structure of the fluxes across discontinuity points and provides the Rankine--Hugoniot formula for the speed of propagating jump discontinuities, in the line of the results in \cite{Caselles2011}.
\begin{proposition}
\label{pr:8.1}
Let $u \in C([0,T];L^1(\R^d))$ be the entropy solution of \eqref{eq:anisotropic_template} with $0\le u(0)=u_0 \in L^\infty(\R^d)\cap BV(\R^d)$. Assume that $u \in BV_{loc}(Q_T)$.  Then the entropy conditions \eqref{entcondsimple} hold if and only if for almost any $t \in (0,T)$
\begin{equation}
\label{entcondsimple2}
[\z \cdot \nu^{J_{u(t)}}]_+ = \Phi_\infty \left(\nu^{J_{u(t)}}\right) u^+(t) \quad \mbox{and} \quad [\z \cdot \nu^{J_{u(t)}}]_- = \Phi_\infty \left(\nu^{J_{u(t)}}\right) u^-(t)
\end{equation}
hold $\mathcal{H}^{d-1}$-a.e. on $J_{u(t)}$. Moreover the speed of any discontinuity front is 
\begin{equation}
\label{vformula}
\v=\Phi_\infty \left(\nu^{J_{u(t)}}\right)\:.
\end{equation}
\end{proposition}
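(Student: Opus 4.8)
The plan is to work $\mathcal{H}^{d-1}$-a.e.\ on $J_{u(t)}$ and reduce everything to a scalar statement about the one-sided traces. Write $\nu:=\nu^{J_{u(t)}}$, let $u^\pm$ denote the approximate limits of $u(t)$ on either side of the jump, and set $b^\pm:=[\z\cdot\nu]_\pm$ for the one-sided traces of the normal flux. The starting point is the jump inequality \eqref{entcondsimple} from Proposition \ref{pr:7.1}, which must hold for every admissible truncation pair, together with two structural facts. First, the saturation bound $\z\cdot\nu\le u\,\Phi_\infty(\nu)$, valid because $\mathrm{Im}\,\psi=\mathrm{Im}\,\nabla\Phi\subset\mathrm{dom}\,\Phi^*$ by Lemma \ref{lm:folklore} and $\Phi_\infty=\sigma_{\mathrm{dom}\,\Phi^*}$, passes to the traces to give $b^\pm\le\Phi_\infty(\nu)\,u^\pm$; hence the deficits
$$c^\pm:=\Phi_\infty(\nu)\,u^\pm-b^\pm$$
are nonnegative. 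Second, restricting the measure identity $u_t=\div\z$ to $J_{u(t)}$ yields the Rankine--Hugoniot relation $\v\,(u^+-u^-)=b^+-b^-$ (equivalently, this drops out of \eqref{entcondsimple} upon taking a constant truncation product $T(s)S(s)\equiv\mathrm{const}$ in both classes $\mathcal{TSUB}$ and $\mathcal{TSUPER}$). Consequently both \eqref{entcondsimple2} and the speed formula \eqref{vformula} are equivalent to the single assertion $c^+=c^-=0$: once $b^\pm=\Phi_\infty(\nu)u^\pm$, Rankine--Hugoniot forces $\v(u^+-u^-)=\Phi_\infty(\nu)(u^+-u^-)$, whence $\v=\Phi_\infty(\nu)$ since $u^+\neq u^-$ on the jump set.

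For the ``if'' direction I would simply substitute $c^\pm=0$: Rankine--Hugoniot then gives $\v=\Phi_\infty(\nu)$, and expanding the brackets in \eqref{entcondsimple} collapses both sides to the same quantity, so the entropy inequality holds as the identity $0\le 0$.

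The substance lies in the ``only if'' direction. Expanding the brackets in \eqref{entcondsimple} and eliminating $\v$ by means of Rankine--Hugoniot, the inequality rearranges into
$$c^+\,T(u^+)S(u^+)-c^-\,T(u^-)S(u^-)\ \le\ \bigl(\Phi_\infty(\nu)-\v\bigr)\int_{u^-}^{u^+}T(s)S(s)\,ds,$$
which must hold for every admissible $(T,S)$. I would then test this with truncation pairs whose product $T\,S$ concentrates near a single trace value. Choosing a $\mathcal{TSUB}$ pair with $T\,S$ supported near $u^-$, so that $T(u^+)S(u^+)=0$, and letting the support shrink makes the right-hand integral tend to $0$ while $T(u^-)S(u^-)$ stays bounded away from $0$; this forces $c^-\le 0$, and with $c^-\ge 0$ we get $c^-=0$. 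The mirror choice, exploiting the symmetry under $(+)\leftrightarrow(-)$, $\nu\mapsto-\nu$ (recall $\Phi_\infty$ is even) and $\mathcal{TSUB}\leftrightarrow\mathcal{TSUPER}$, isolates the other trace and yields $c^+=0$ in the same way. Combining $c^\pm=0$ gives \eqref{entcondsimple2}, and \eqref{vformula} follows from Rankine--Hugoniot as above.

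The main obstacle is precisely this last step. One must verify that the concentrating truncation families are genuinely admissible in their respective classes, keep careful track of the sign conventions built into the $[\,\cdot\,]_{+-}$ brackets and of the monotonicity defining $\mathcal{TSUB}$ and $\mathcal{TSUPER}$ so that each class isolates the trace it controls, and justify the limiting procedure rigorously. This is where the bookkeeping of \cite{Caselles2011} must be transcribed to the present setting; the only genuinely new ingredient is that the scalar saturation constant there is replaced by the direction-dependent value $\Phi_\infty(\nu)$, furnished by Proposition \ref{prop:Phinorm} and Lemma \ref{lm:folklore}.
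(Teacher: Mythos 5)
Your overall strategy coincides with the paper's proof: define the nonnegative deficits $c^\pm=\Phi_\infty(\nu^{J_{u(t)}})u^\pm-[\z\cdot\nu^{J_{u(t)}}]_\pm$ via the saturation bound (Lemma \ref{lm:folklore} together with $\Phi_\infty=\sigma_{\mathrm{dom}\,\Phi^*}$ and the alignment of $D^ju(t)$ with $\nu^{J_{u(t)}}$), take the Rankine--Hugoniot relation from Lemma \ref{lem:conrh}, kill each deficit by testing \eqref{entcondsimple} with truncation products that concentrate at one trace value (left side linear in the concentration parameter against a vanishing right side), and verify the converse by direct substitution, where $\v=\Phi_\infty(\nu^{J_{u(t)}})$ makes the inequality an identity. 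Your rearranged inequality $c^+T(u^+)S(u^+)-c^-T(u^-)S(u^-)\le\bigl(\Phi_\infty(\nu^{J_{u(t)}})-\v\bigr)\int_{u^-}^{u^+}T(s)S(s)\,ds$ is a correct rewriting of \eqref{entcondsimple}, and the ``if'' direction is handled exactly as in the paper.

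However, one concrete step as you describe it would fail: there is no pair $(T,S)\in\mathcal{TSUB}$ with $T(u^+)S(u^+)=0$ while $T(u^-)S(u^-)$ stays bounded away from zero. In $\mathcal{TSUB}$ both $T$ and $S$ are nonnegative and nondecreasing, so the product $TS$ is nonnegative and nondecreasing; since the paper's convention is $u^+>u^-$ on $J_{u(t)}$, this forces $T(u^+)S(u^+)\ge T(u^-)S(u^-)$, so the product cannot be ``supported near $u^-$''. You have the class-to-trace assignment backwards: monotone nonnegative products in $\mathcal{TSUB}$ vanish below the jump values and therefore isolate the \emph{plus} trace --- the paper takes $S(r)T(r)=(r-(u^+-\epsilon))^+$, which yields $\epsilon\,c^+\le\bigl(\Phi_\infty(\nu^{J_{u(t)}})-\v\bigr)\epsilon^2/2$ and hence $c^+=0$ as $\epsilon\to 0$. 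Dually, in $\mathcal{TSUPER}$ (where $T\ge0$ is nonincreasing and $S\le0$ is nondecreasing) the product $TS$ is nonpositive and nondecreasing, hence vanishes above; a choice of the form $S(r)T(r)=-(u^-+\epsilon-r)^+$ isolates the \emph{minus} trace and gives $c^-=0$ by the same scaling. Since you already anticipated the $\mathcal{TSUB}\leftrightarrow\mathcal{TSUPER}$ swap for the mirror trace, the repair is purely mechanical, but as written your first concentration family is not admissible --- and this is precisely the sign and monotonicity bookkeeping you yourself flagged as the main obstacle. (Note also that your parenthetical rederivation of Rankine--Hugoniot from constant truncation products needs both classes, since $\mathcal{TSUB}$ alone only yields constants of one sign and hence only one of the two inequalities; your primary route through Lemma \ref{lem:conrh} is the one the paper uses.)
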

\begin{proof}
The proof is a suitable generalization of that given for Proposition 8.1 in \cite{Caselles2011} -see also \cite{Calvo2015}. Let us show that \eqref{entcondsimple} implies \eqref{entcondsimple2}. For that we let $\epsilon >0$ be such that $u^- < u^+-\epsilon <u^+$ and we choose $(S,T)\in \mathcal{TSUB}$ so that $S(r)T(r)=(r-(u^+-\epsilon))^+$. Then we compute:
\begin{enumerate}
\item $[STI(u(t))]_{+-}= \epsilon u^+$,
\item $[J_{TS}(u(t))]_{+-}= \frac{\epsilon^2}{2}$,
\item $[[\z(t) \cdot \nu^{J_{u(t)}}]T(u(t))S(u(t))]_{+-}=\epsilon [\z(t) \cdot \nu^{J_{u(t)}}]_{+}$.
\end{enumerate}
Then \eqref{entcondsimple} is written as
\begin{equation}
\label{eq:e12}
\epsilon (\Phi_\infty (\nu^{J_{u(t)}}) u^+ -  [\z(t) \cdot \nu^{J_{u(t)}}]_{+}) \le\frac{\epsilon^2}{2}\left(\Phi_\infty (\nu^{J_{u(t)}}) -\v \right)\:.
\end{equation}
Here we draw attention to the fact that 
$$
| [\z(t) \cdot \nu^{J_{u(t)}}]_{+}|\le \Phi_\infty (\nu^{J_{u(t)}}) u^+
$$
-note that $D^j u(t)$ is aligned with $\nu^{J_{u(t)}}$. Hence \eqref{eq:e12} leads to a contradiction, unless $ [\z(t) \cdot \nu^{J_{u(t)}}]_{+}= \Phi_\infty (\nu^{J_{u(t)}})  u^+$. We show that $ [\z(t) \cdot \nu^{J_{u(t)}}]_{-}= \Phi_\infty (\nu^{J_{u(t)}}) u^-$ in a similar way. Therefore \eqref{entcondsimple2} holds -and \eqref{vformula} follows, see below.

Let us show now the converse implication. Thanks to \eqref{entcondsimple2} we may write
\[
\begin{split}
[[\z \cdot \nu^{J_{u(t)}}]T(u)S(u)]_{+-} & = [\z \cdot \nu^{J_{u(t)}}]_+T(u^+)S(u^+) - [\z \cdot \nu^{J_{u(t)}}]_-T(u^-)S(u^-)
\\
& = \Phi_\infty \left(\nu^{J_{u(t)}}\right) u^+T(u^+)S(u^+) - \Phi_\infty \left(\nu^{J_{u(t)}}\right) u^-T(u^-)S(u^-)
\\
& = \Phi_\infty \left(\nu^{J_{u(t)}}\right) [STI(u(t))]_{+-}.
\end{split}
\]
Thus, we recast \eqref{entcondsimple} as
\begin{equation}
\label{entcondsimple3}
\v [J_{TS}(u(t))]_{+-} \le \Phi_\infty \left(\nu^{J_{u(t)}}\right) [J_{TS}(u(t))]_{+-}\:.
\end{equation}
Recall that the Rankine--Hugoniot conditions stated in Lemma \ref{lem:conrh} are 
$$
 \v [u]_{+-} = [[\z \cdot \nu^{J_{u(t)}}]]_{+-}.
$$
Owing to \eqref{entcondsimple2} we have that 
$$
\v = \frac{[\z \cdot \nu^{J_{u(t)}}]_{+}- [\z \cdot \nu^{J_{u(t)}}]_{-}}{u^+- u^-} = \Phi_\infty \left(\nu^{J_{u(t)}}\right).
$$
Thus \eqref{entcondsimple3} is trivially satisfied. This proves that \eqref{entcondsimple2} implies the fulfillment of the entropy conditions.
\end{proof}
\begin{remark}
We note that for solutions having no Cantor part we have equality in the entropy inequality \eqref{entcondsimple}. This is not necessarily the case in more general situations, like the porous media variants discussed in \cite{Caselles2011}.
\end{remark}
%%%%%%%%%%

%%%%%%%%%%%%
\section{Appendix: a primer on entropy solutions}
\label{sec:appendix}
We briefly recall here the functional framework introduced in \cite{Andreu2005elliptic,Andreu2005parabolic} to deal with equations of the form 
\begin{equation}
\label{eq:generaltemplate}
u_t=\div {\bf a}\, (u,\nabla u)
\end{equation}
 such that the Lagrangian of ${\bf a}\, (z,\xi)$ grows linearly for $|\xi|\to \infty$. We follow closely the presentation in \cite{Carrillo2013} -see also \cite{Calvo2015survey} for more details.

\subsection{Several classes of truncation functions}
We will use in the sequel a number of different truncation functions. For $a <
b$ and $l \in \R$, let $T_{a,b}(r) := \max \{\min \{b,r\},a\}$, $ T_{a,b}^l =  T_{a,b}-l$.
We denote \cite{Andreu2005elliptic,Andreu2005parabolic}
\[
\begin{split}
\mathcal T_r & := \{ T_{a,b} \ : \ 0 < a < b \},  
\\
\mathcal{T}^+ & := \{ T_{a,b}^l \ : \ 0 < a < b ,\, l\in \R, \, T_{a,b}^l\geq 0 \},  
\\
\mathcal{T}^- & := \{ T_{a,b}^l \ : \ 0 < a < b ,\, l\in \R, \, T_{a,b}^l\leq 0 \},
\\
{\mathcal P} & := \{ p : [0, +\infty) \rightarrow \R\, \mbox{Lipschitz},\, p'(s)=0 \, \mbox{for large enough}\, s \},
\\
{\mathcal P}^+ & := \{ p \in {\mathcal P} \ : \ p \geq 0 \}.
\end{split}
\]

We need to consider the following function space
$$
TBV_{\rm r}^+(\R^d):= \left\{ w \in L^1(\R^d)^+  \ :  \ \ T_{a,b}(w) - a \in BV(\R^d), \
\ \forall \ T_{a,b} \in \mathcal T_r \right\}.
$$
\begin{remark}
Using the chain rule for BV-functions (see for instance
\cite{Ambrosio2000}), one can give a sense to $\nabla u$ for a
function $u \in TBV^+(\R^d)$ as the unique function $v$ which
satisfies
\begin{equation*}
\label{E1WRN}
\nabla T_{a,b}(u) = v \1_{\{a < u  < b\}} \ \ \ \ \ {\mathcal
L}^d-a.e., \ \ \forall \ T_{a,b} \in \mathcal{T}_r.
\end{equation*}
We refer to \cite{Benilan1995} for details.
\end{remark}

An extended class of truncation functions was introduced by Caselles in order to assess fine properties of entropy solutions. Following \cite{Caselles2011,Caselles2011bis}, we introduce  $\mathcal{TSUB}$ as the class of functions $S,T \in \mathcal{P}$ such that 
$$
S\ge 0, \quad S'\ge 0 \quad \mbox{and} \quad T\ge 0, \quad T'\ge 0
$$
and $p(r)=\tilde{p}(T_{a,b}(r))$ for some $0<a<b$, being $\tilde{p}$ differentiable in a neighborhood of $[a,b]$ and $p=S,T$. Similarly, let $\mathcal{TSUPER}$ be the class of functions $S,T \in \mathcal{P}$ such that 
$$
S\le 0, \quad S'\ge 0 \quad \mbox{and} \quad T\ge 0, \quad T'\le 0
$$
and $p(r)=\tilde{p}(T_{a,b}(r))$ for some $0<a<b$, being $\tilde{p}$ differentiable in a neighborhood of $[a,b]$ and $p=S,T$.

%%%
\subsection{Functional calculus}\label{sect:functionalcalculus}

In order to define the notion of entropy solutions of
(\ref{eq:generaltemplate}) we
need a functional calculus defined on functions whose truncations
are in $BV$. For that we need to introduce some functionals defined on functions of bounded variation \cite{Andreu2005elliptic,Andreu2005parabolic}. Let $\Omega$ be an open subset of $\R^d$ and consider $g: \Omega \times \R
\times \R^d \rightarrow [0, \infty)$ a locally bounded Caratheodory function.
 After Dal Maso \cite{DalMaso1980} we consider the following  
functional:
\begin{equation}
\nonumber
\begin{array}{ll}
\displaystyle {\mathcal R}_g(u) := \int_{\Omega} g(x,u(x), \nabla
u(x)) \, dx + \int_{\Omega} g_\infty \left(x,
\tilde{u}(x),\frac{Du}{|Du|}(x) \right) \,  d\vert D^c
u \vert \\ \\
\qquad \qquad \displaystyle + \int_{J_u} \left(\int_{u_-(x)}^{u_+(x)}
g_\infty \left(x, s, \frac{Du}{|Du|}(x) \right) \, ds \right)\, d \H^{d-1}(x),
\end{array}
\end{equation}
for $u \in BV(\Omega) \cap L^\infty(\Omega)$, being $\tilde{u}$
the approximated limit of $u$ (see e.g. \cite{Ambrosio2000}). The recession
function $g_\infty$ of $g$ with respect to its third variable is defined by
\begin{equation}
\label{Asimptfunct}
 g_\infty(x, z, \xi) = \lim_{t \to 0^+} t\, g \left(x, z, \xi/t
 \right).
\end{equation}
It is convex and homogeneous of degree $1$ in $\xi$. 

The following semi-continuity result follows from \cite{DeCicco2005}:
\begin{theorem}
\label{th:DC}
Let $g$ above satisfy the following properties:
\begin{enumerate}
\item For every $(z,\xi)\in \R \times \R^d$, the function $g(\cdot,z,\xi)$ is of class $C^1$.

\item For every $(x,z)\in \Omega \times \R$, the function $g(x,z,\cdot)$ is convex.

\item For every $(x,\xi)\in \Omega \times \R^d$, the function $g(x,\cdot,\xi)$ is continuous.
\end{enumerate}
Then ${\mathcal R}_g(u)$ is lower semicontinuous with respect to the $L^1(\Omega)$-convergence.
\end{theorem}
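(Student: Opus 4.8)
The plan is to deduce the statement directly from the $L^1$-lower semicontinuity theory for Dal Maso type functionals established in \cite{DeCicco2005}, so the essential task is to verify that our integrand $g$ fits into their framework. First I would record the hypotheses under which their theorem guarantees that $\mathcal{R}_g$ is lower semicontinuous with respect to $L^1(\Omega)$-convergence — nonnegativity, joint measurability, convexity of $g(x,z,\cdot)$, continuity in the $z$-variable and a suitable regularity in $x$ — and observe that conditions (1)--(3) supply exactly these (and in the spatial variable something stronger, since $C^1$ is required rather than mere continuity). It is worth stressing that the regularity of $g(\cdot,z,\xi)$ is genuinely needed and not a technical convenience: it is classical that discontinuity of the integrand in the spatial variable can destroy $L^1$-lower semicontinuity, so hypothesis (1) cannot simply be dropped.

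To indicate the mechanism behind the cited theorem, I would exploit the convexity in the gradient slot. For fixed $(x,z)$ one has the representation (see \cite{Rockafellar})
\[
g(x,z,\xi) = \sup_{k\in\NN} \bigl\{ \alpha_k(x,z) + \beta_k(x,z)\cdot \xi \bigr\},
\]
and thanks to hypotheses (1) and (3) the coefficients $\alpha_k,\beta_k$ may be chosen continuous in $(x,z)$; correspondingly the recession function inherits the representation $g_\infty(x,z,\xi)=\sup_{k\in\NN}\beta_k(x,z)\cdot\xi$, so that all three contributions defining $\mathcal{R}_g$ are driven by the same affine data. Writing $g^{(k)}:=\max_{j\le k}\{\alpha_j+\beta_j\cdot\xi\}$ one has $g^{(k)}\uparrow g$ and $g^{(k)}_\infty\uparrow g_\infty$, whence $\mathcal{R}_{g^{(k)}}\uparrow\mathcal{R}_g$ by monotone convergence in each of the three integrals. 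Since a pointwise supremum of lower semicontinuous functionals is lower semicontinuous, it suffices to prove lower semicontinuity for integrands that are finite maxima of affine functions of $\xi$ with continuous coefficients.

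For such integrands the result is classical: one combines Reshetnyak's lower semicontinuity theorem for convex one-homogeneous functions of the vector measure $Du$ with an Ioffe-type measurable selection that turns the supremum structure into a workable minorant, and integrates by parts to shift the gradient onto the continuous coefficients $\beta_j$. The purely absolutely continuous contribution $\int_\Omega \alpha(x,u)\,dx$ is then controlled by Fatou's lemma after extracting an a.e.-convergent subsequence from any $L^1$-convergent one.

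The step I expect to be the genuine obstacle is the coupling between the dependence on the function $u$ itself and the singular (Cantor and jump) parts of $Du$. Reshetnyak's theorem disposes of the joint $(x,\xi)$-dependence with no effort, but $g_\infty$ must be evaluated at the approximate limit $\tilde u$ along the Cantor part and integrated across the whole interval $[u_-,u_+]$ on the jump set $J_u$ — precisely where $u$ has no classical value. The chain-rule form of the jump integral $\int_{u_-}^{u_+} g_\infty(x,s,Du/|Du|)\,ds$ is exactly what preserves lower semicontinuity across discontinuities, but making this rigorous requires controlling the behaviour of the approximate limits $\tilde u_n$ and of the jump sets $J_{u_n}$ as $u_n\to u$ in $L^1$; this is the technical heart of the analysis in \cite{DeCicco2005}, and the part that would absorb most of the effort were one to reproduce the proof rather than invoke it.
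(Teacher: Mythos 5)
Your proposal takes essentially the same route as the paper: the paper gives no proof of Theorem \ref{th:DC}, stating it as a direct consequence of \cite{DeCicco2005}, which is exactly your main line --- verify that hypotheses (1)--(3) (nonnegativity and $C^1$ dependence in $x$, convexity in $\xi$, continuity in $z$) are the assumptions of the De Cicco--Fusco--Verde semicontinuity theorem and invoke it. Your supplementary sketch of the mechanism (supremum of affine integrands with continuous coefficients, monotone approximation $\mathcal{R}_{g^{(k)}}\uparrow\mathcal{R}_g$ across all three terms, Reshetnyak-type arguments, and the delicate handling of $g_\infty$ on the Cantor part and across $[u_-,u_+]$ on $J_u$) is a faithful outline of that reference's strategy, so nothing essential is missing.
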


Let us consider now the following functional,  defined in $TBV^+(\R^d)$:
\[
\mathcal{R}(g,T)(u):=\mathcal{R}_g(T_{a,b}(u))+ \int_{[u\le a]} (g(x,u(x),0)-g(x,a,0))\, dx
\]
\[
+ \int_{[u\ge b]} (g(x,u(x),0)-g(x,b,0))\, dx\:.
\]

\begin{definition}[measures $g(u, DT(u))$]
Assume now that $g:\R\times \R^d \to [0, \infty)$ is a Borel function
such that
\begin{equation}
\nonumber
C \vert \xi \vert - D \leq g(z, \xi)  \leq M(1+ \vert \xi \vert)
\qquad \forall (z,\xi)\in \R^d, \, \vert z \vert \leq R,
\end{equation}
for any $R > 0$ and for some constants  $C,D,M \geq 0$ which may
depend on $R$. Assume also that, for any $u\in L^1(\R^d)^+$
\begin{equation}
\nonumber
\1_{\{u\leq a\}} \left(g(u(x), 0) - g(a, 0)\right), \1_{\{u \geq b\}} \left(g(u(x),
0) - g(b, 0) \right) \in L^1(\R^d).
\end{equation}
Let $u \in TBV_{\rm r}^+(\R^d)  \cap
L^\infty(\R^d)$  and $T = T_{a,b}-l\in {\mathcal T}^+ $. For each
$\phi\in \mathcal{C}_c(\R^d)$, $\phi \geq 0$, we define the Radon measure
$g(u, DT(u))$ by
\begin{equation}
\label{FUTab}
\begin{array}{c}
\displaystyle \langle g(u, DT(u)), \phi \rangle : = {\mathcal R}(\phi g,T)(u)= {\mathcal R}_{\phi g}(T_{a,b}(u))+
\displaystyle\int_{\{u \leq a\}} \phi(x)
\left( g(u(x), 0) - g(a, 0)\right) \, dx 
\\
 \displaystyle
\displaystyle  + \int_{\{u \geq b\}} \phi(x)
\left(g(u(x), 0) - g(b, 0) \right) \, dx.
\end{array}
\end{equation}
If $\phi\in \mathcal{C}_c(\R^d)$, we write $\phi = \phi^+ -
\phi^-$ 
and we define 
$$\langle g(u, DT(u)), \phi \rangle : =
\langle g(u, DT(u)), \phi^+ \rangle- \langle g(u, DT(u)), \phi^- \rangle.$$
\end{definition}

Note that, as a consequence of Theorem \ref{th:DC}, the following holds: if $g(z,\xi)$ is continuous in $(z,\xi)$, convex
in $\xi$ for any $z\in \R$, and $\phi \in \mathcal{C}^1(\R^d)^+$ has compact
support, then  $\langle g(u, DT(u)), \phi \rangle$ is lower
semi-continuous in $TBV^+(\R^d)$ with respect to the 
$L^1(\R^d)$-convergence.

\begin{definition}[measures $g_S(u, DT(u))$]
 Let $S \in  \mathcal{P}^+$, $T \in \mathcal{T}^+$.
We assume that
$u \in TBV_{\rm r}^+(\R^d) \cap L^\infty(\R^d)$ and 
$$
\1_{\{u\leq a\}} S(u)\left(g(u(x), 0) - g(a, 0)\right), \1_{\{u \geq b\}} S(u)\left(g(u(x),
0) - g(b,0) \right) \in L^1(\R^d).
$$
Then we define $g_S(u,DT(u))$ as the Radon
measure given by (\ref{FUTab}) with $g_{S}(z,\xi) = S(z)
g(z,\xi)$. 
\end{definition}

Let us introduce $f,h: \R \times \R^d \rightarrow \R$ defined by
\begin{equation}
\label{hdef}
\a(z,\xi)=\nabla_\xi f(z,\xi)\quad \mbox{and}\quad h(z,\xi):=\a(z,\xi) \xi,
\end{equation}
being $\a$ the flux in \eqref{eq:generaltemplate}. 

\begin{definition}[measures generated by $f$ and $h$]
We introduce the measures $h(u,DT(u))$, $h_S(u,DT(u))$ as those generated by $g(z,\xi) =h(z,\xi) $ and $g_S(z,\xi)=S(z) h(z,\xi)$. In a similar way, the measures $f(u,DT(u)), f_S(u,DT(u))$ are those generated by $g(z,\xi) =f(z,\xi) $ and $g_S(z,\xi)=S(z) f(z,\xi)$.
\end{definition}

%%%%%%%%%%%%%%%%

\subsection{Entropy solutions of the evolution problem}\label{sect:defESpp}

Let $L^1_{w}(0,T,BV(\R^d))$  be the space of weakly$^*$
measurable functions $w:[0,T] \to BV(\R^d)$ (i.e., $t \in [0,T]
\to \langle w(t),\phi \rangle$ is measurable for every $\phi$ in the predual
of $BV(\R^d)$) such that $\int_0^T \| w(t)\|_{BV} \, dt$ is finite.
Observe that, since $BV(\R^d)$ has a separable predual (see
\cite{Ambrosio2000}), it follows easily that the map $t \in [0,T]\to
\Vert w(t) \Vert_{BV}$ is measurable. By  $L^1_{loc, w}(0, T,
BV(\R^d))$ we denote the space of weakly$^*$ measurable functions
$w:[0,T] \to BV(\R^d)$ such that the map $t \in [0,T]\to \Vert
w(t) \Vert_{BV}$ is in $L^1_{loc}(0, T)$.

We let $J_q(r)$ denote the primitive of $q$ for any real function $q$; i.e. 
$$\displaystyle J_q(r):=\int_0^r q(s)\,ds.$$

\begin{definition} 
\label{def:espb}
Assume that $0 \le u_0 \in L^1(\R^d)\cap L^\infty(\R^d)$. A
measurable function $u: (0,T)\times \R^d \rightarrow \R$ is an
{\it entropy  solution} of \eqref{eq:generaltemplate} in $Q_T:=(0,T)\times \R^d$ with initial datum $u_0$ if $u \in \mathcal{C}([0, T]; L^1(\R^d))$,
$T_{a,b}(u(\cdot)) - a \in L^1_{loc, w}(0, T, BV(\R^d))$ for all
$0 < a < b$, and
\begin{itemize}
\item[(i)] \ $u_t = {\rm div} \, \a(u(t), \nabla u(t))$ in $\mathcal{D}^\prime(Q_T)$,
\item[(ii)]  $u(0) = u_0$, and 
\item[(iii)] \ the following
inequality is satisfied
\begin{equation}
\begin{array}{c}
 \label{eineq}
\displaystyle \int_0^T\int_{\R^d} \phi
h_{S}(u,DT(u)) \, dt + \int_0^T\int_{\R^d} \phi h_{T}(u,DS(u)) \, dt
\\
\leq  \displaystyle\int_0^T\!\int_{\R^d}\! \Big\{ \!J_{TS}(u(t)) \phi^{\prime}(t) - \a(u(t), \nabla u(t))\! \cdot \!\nabla \phi 
T(u(t)) S(u(t))\!\Big\} dxdt, 
\end{array}
\end{equation}
 for truncation functions $S,  T \in \mathcal{T}^+$, and any  smooth function $\phi$ of
 compact support, in particular  those  of the form $\phi(t,x) =
 \phi_1(t)\rho(x)$, $\phi_1\in {\mathcal D}(0,T)$, $\rho \in
 {\mathcal D}(\R^d)$.
\end{itemize}
\end{definition}
\noindent
{This definition is a simplification of the original one in \cite{Andreu2005parabolic}, see \cite{Andreu2008} for instance}; a related notion using the class $\mathcal{TSUB}$ can be found in \cite{Caselles2011bis}. Note that the statements in this paragraph and the following one hold under a set of assumptions on $\a$ that are described in \cite{Andreu2005elliptic,Andreu2007,Caselles2011}, which we denote collectively by $({\rm H})$. We have the following existence and uniqueness result \cite{Andreu2005parabolic}.

\begin{theorem}
\label{EUTEparabolic}
 Let assumptions
$({\rm H})$ hold. Then, for any initial datum $0 \leq u_0 \in L^1(\R^d)\cap L^\infty(\R^d)$
there exists a unique entropy solution $u$ of
\eqref{eq:generaltemplate} in $Q_T $ for every $T
> 0$ such that $u(0) = u_0$.
Moreover, if $u(t)$,
$\overline{u}(t)$ are the entropy solutions corresponding to
initial data $u_0$, $\overline{u}_0 \in L^{1}(\R^d)^+$ respectively, then
\begin{equation}
\nonumber
\Vert (u(t) - \overline{u}(t))^+ \Vert_1 \leq
\Vert (u_0 - \overline{u}_0)^+ \Vert_1 \ \ \ \ \ \ {\rm for \
all} \ \ t \geq 0,
\end{equation}
where we define the positive part as $u^+(t,x) = \max \{u(t,x),0\}$.
\end{theorem}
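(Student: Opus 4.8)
The natural route is through the theory of nonlinear contraction semigroups generated by accretive operators in $L^1(\R^d)$, in the Crandall--Liggett framework. The plan is to associate to \eqref{eq:generaltemplate} the (possibly multivalued) operator $\mathcal{A}$ on $L^1(\R^d)$ whose graph consists of pairs $(u,w)$ such that $w=-\div \a(u,\nabla u)$ holds in $\mathcal{D}'(\R^d)$ together with the stationary analogue of the entropy inequalities \eqref{eineq}. Once I show that $\mathcal{A}$ is $m$-completely-accretive with dense domain, the abstract theory produces a unique mild solution $u(t)=\lim_{n\to\infty}(I+\tfrac{t}{n}\mathcal{A})^{-n}u_0$ for every $u_0\in\overline{D(\mathcal{A})}\supset L^1\cap L^\infty$, and the contraction estimate for the positive parts is an immediate consequence of complete accretivity. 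The remaining work is then to identify these mild solutions with the entropy solutions of Definition \ref{def:espb}.

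The first technical step is the resolvent (elliptic) problem: given $\lambda>0$ and $f\in L^1\cap L^\infty$, solve $u+\lambda\mathcal{A}u\ni f$, i.e. $u-\lambda\,\div \a(u,\nabla u)=f$ in the appropriate entropy sense. Because the Lagrangian of $\a$ grows only linearly in its second variable, I would not expect minimizers in a Sobolev space; instead I would set up a relaxed variational problem in $BV$ and use the Dal Maso functional $\mathcal{R}_g$ together with the lower semicontinuity furnished by Theorem \ref{th:DC} to obtain a minimizer, whose optimality conditions yield the stationary entropy inequalities. This is precisely the content of the companion elliptic theory \cite{Andreu2005elliptic}; the $L^1$--$L^\infty$ a priori bounds and the Kato-type inequalities proved there simultaneously give the range condition (hence $m$-accretivity) and complete accretivity of $\mathcal{A}$.

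With $\mathcal{A}$ in hand I would invoke Crandall--Liggett to generate the semigroup and obtain mild solutions, and then upgrade mild to entropy solutions by passing the discrete (implicit Euler) entropy inequalities to the limit as the time step vanishes. Here one tests the stationary inequalities with admissible pairs $(T,S)\in\mathcal{T}^+$ and nonnegative $\phi$, sums over the time discretization, and uses the lower semicontinuity of $u\mapsto\langle h_S(u,DT(u)),\phi\rangle$ in $TBV^+$ (again a consequence of Theorem \ref{th:DC}) to recover \eqref{eineq} in the limit; the continuity $u\in C([0,T];L^1(\R^d))$ comes for free from the semigroup construction. Uniqueness then follows because any entropy solution is in turn a mild solution of the same abstract Cauchy problem, and mild solutions are unique.

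The main obstacle is the $L^1$ comparison principle at the level of entropy solutions, which simultaneously delivers uniqueness and the stated contraction $\|(u(t)-\overline{u}(t))^+\|_1\le\|(u_0-\overline{u}_0)^+\|_1$. Establishing it requires a Kruzhkov-type doubling-of-variables argument adapted to the flux-saturated setting, where the delicate point is the treatment of the singular (jump and Cantor) parts of the measures $h_S(u,DT(u))$ and $\a(u,\nabla u)\cdot D(T(u)S(u))$: one must show that the entropy dissipation controls these singular contributions with the correct sign, using the recession structure $F_\infty(z,\xi)=z|\xi|\Phi_\infty(\theta_\xi)$ recorded in Definition \ref{df:list}. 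This is exactly the step where the linear growth and saturation of the flux make the analysis substantially harder than in the uniformly parabolic case, and it is the heart of the theory invoked here.
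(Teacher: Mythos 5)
Your proposal follows exactly the route the paper indicates for this (quoted) result: existence via the Crandall--Liggett scheme applied to the $m$-completely-accretive operator built from the elliptic resolvent theory of \cite{Andreu2005elliptic}, and uniqueness together with the $L^1$-contraction of positive parts via Kruzhkov's doubling-of-variables technique, which is precisely what is stated after Theorem \ref{EUTEparabolic}. Your outline is correct, and you rightly identify the treatment of the singular parts of the entropy measures in the doubling argument as the genuinely hard step deferred to \cite{Andreu2005parabolic}.
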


Existence of entropy solutions is proved by using Crandall--Liggett's scheme \cite{Crandall1971}
and uniqueness is proved using Kruzhkov's doubling variables technique \cite{Kruzhkov1970,Carrillo1999}.

%%%%%%%%%%%%%%%%%%
\subsection{Sub- and supersolutions}
In order to use the comparison principles introduced in \cite{Andreu2006} a certain technical condition is required.
\begin{assumptions}
\label{compas}
Let the function $h$ {defined by} \eqref{hdef} satisfy
$
h(z,\xi) \le M(z) |\xi|
$
for some positive continuous function $M(z)$ and for any $(z,\xi)\in \R \times \R^d$.
\end{assumptions}
This condition is satisfied in our framework since $|h(z,\xi)|\le \|\psi\|_\infty |z|\, |\xi|$, check Definition \ref{df:list}.

\begin{definition}{\rm \cite{Andreu2006}}
\label{subsuper}
A measurable function $u : (0,T) \times \R^d \rightarrow \R_0^+$ is an entropy sub- (resp. super-) solution of \eqref{eq:generaltemplate} if $u \in C([0,T],L^1(\R^d))$, $\a(u,\nabla u)\in L^\infty(Q_T)$, $T_{a,b}(u) \in L_{loc,w}^1(0,T,BV(\R^d))$ for every $0<a<b$ and the following inequality is satisfied:
\begin{equation}
\begin{array}{c}
\displaystyle \int_0^T\int_{\R^d} \phi
h_{S}(u,DT(u)) \, dt + \int_0^T\int_{\R^d} \phi h_{T}(u,DS(u)) \, dt
\\
\geq  \displaystyle\int_0^T\int_{\R^d} \Big\{ J_{TS}(u(t)) \phi^{\prime}(t) - \a(u(t), \nabla u(t)) \cdot \nabla \phi \
T(u(t)) S(u(t))\Big\} dxdt,  
\label{seineq}
\end{array}
\end{equation}
(whereas for supersolutions we require
\begin{equation}
\begin{array}{c}
\displaystyle \int_0^T\int_{\R^d} \phi
h_{S}(u,DT(u)) \, dt + \int_0^T\int_{\R^d} \phi h_{T}(u,DS(u)) \, dt
\\
\leq  \displaystyle\int_0^T\int_{\R^d} \Big\{ J_{TS}(u(t)) \phi^{\prime}(t) - \a(u(t), \nabla u(t)) \cdot \nabla \phi \
T(u(t)) S(u(t))\Big\} dxdt  \, )
\label{seineqbis}
\end{array}
\end{equation}
for any $\phi \in \mathcal{D}(Q_T)^+$ and any truncations $T \in \mathcal{T}^+,\ S \in \mathcal{T}^-$.
\end{definition}
This implies that 
\begin{equation}
\label{sinside}
u_t \le \div \a(u,\nabla u) \quad \mbox{in}\ \mathcal{D}'(Q_T)
\end{equation}
(resp. with $\ge$). The following comparison principle was shown in \cite{Andreu2006}:
\begin{theorem}
\label{theocomp}
Let assumptions (H) and Assumptions {\rm \ref{compas}} hold. Given an entropy solution $u$ of \eqref{eq:generaltemplate} corresponding to an initial datum $0\le u_0\in (L^\infty \cap L^1)(\R^d)$, the following statements hold true:
\begin{enumerate}
\item if $\overline{u}$ is a supersolution of \eqref{eq:generaltemplate} such that $\overline{u}(t) \in BV(\R^d)$ for a.e. $t \in (0,T)$, then
$$
\|(u(t)-\overline{u}(t))^+\|_1 \le \|(u_0-\overline{u}(0))^+\|_1 \quad \forall t \in [0,T],
$$
\item if $\overline{u}$ is a subsolution of \eqref{eq:generaltemplate} such that $\overline{u}(t) \in BV(\R^d)$ for a.e. $t \in (0,T)$, then
$$
\|(\overline{u}(t)-u(t))^+\|_1 \le \|(\overline{u}(0)-u_0)^+\|_1 \quad \forall t \in [0,T].
$$
\end{enumerate}
\end{theorem}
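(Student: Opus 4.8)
The plan is to establish both contraction estimates by Kruzhkov's doubling of variables technique \cite{Kruzhkov1970,Carrillo1999}, adapted to the entropy-solution framework recalled above. I would prove statement (1) in detail and obtain (2) by the symmetric argument, exchanging the roles of $u$ and $\overline u$ and reversing the signs of the relevant truncations. In both cases the hypothesis $\overline u(t)\in BV(\R^d)$ is what guarantees that the singular dissipation measures $h_S(\overline u,DT(\overline u))$ and $h_T(\overline u,DS(\overline u))$ are well defined for the comparison function, so that the two entropy formulations can actually be paired.

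First I would write the entropy inequality \eqref{eineq} for the solution $u=u(t,x)$ against a test function $\phi=\phi(t,x,s,y)$ and truncations $(T,S)\in\mathcal{T}^+$, and simultaneously the supersolution inequality \eqref{seineqbis} for $\overline u=\overline u(s,y)$ against the same $\phi$, with truncations chosen so that the products $T(u)S(\overline u)$ localize on the region $\{u>\overline u\}$. The doubling consists in concentrating $\phi$ on the diagonal, e.g. $\phi=\rho_{\varepsilon}(t-s)\,\rho_{\delta}(x-y)\,\zeta(t,x)$ with $\rho_{\varepsilon},\rho_{\delta}$ standard mollifiers and $\zeta\ge0$ a smooth temporal cutoff, and in letting the truncation profiles tend to the positive-part functional. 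I would then add the two inequalities and pass to the limit $\varepsilon,\delta\to0$. The flux contributions split into an absolutely continuous part, controlled by the monotonicity $D\psi\ge0$ of $\psi=\nabla\Phi$ (equivalently, convexity of $\Phi$), and a singular part carried by the measures $h_S,h_T$ and their $\overline u$-counterparts; the algebraic point is that, since $\a(z,\xi)=z\,\psi(\xi/z)$ is monotone in $\xi$ with the linear growth encoded by $\Phi_\infty$, the cross terms produced by the doubling combine with a favourable sign. Here Assumptions \ref{compas}, i.e. the bound $h(z,\xi)\le M(z)|\xi|$ already verified in our setting, is what keeps the singular flux contributions integrable and forces the boundary-type terms to vanish in the limit (we work on all of $\R^d$, so no lateral boundary intervenes and the $L^1$ integrability suppresses contributions at infinity).

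The output of this step is a Kato-type inequality, understood distributionally on $(0,T)$,
\[
\frac{d}{dt}\int_{\R^d}\bigl(u(t)-\overline u(t)\bigr)^+\,dx\ \le\ 0 .
\]
Integrating it against a nonincreasing temporal cutoff and using the continuity $u,\overline u\in C([0,T];L^1(\R^d))$ to evaluate the endpoints yields $\|(u(t)-\overline u(t))^+\|_1\le\|(u_0-\overline u(0))^+\|_1$ for every $t\in[0,T]$, which is exactly statement (1); statement (2) follows with the roles interchanged.

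The main obstacle I anticipate is the rigorous treatment of the singular diffusion terms. Unlike nondegenerate parabolic problems, the flux saturates, so the entropy dissipation is encoded in Radon measures $h_S(u,DT(u))$ supported in part on the jump and Cantor sets, and one must show that the corresponding cross terms in the doubled functional carry the correct sign and survive the limit. This is precisely where the lower-semicontinuity of the Dal Maso functionals $\mathcal{R}_g$ (Theorem \ref{th:DC}), the $1$-homogeneity of the recession function $h_\infty$, and the fine structure of $BV$ derivatives must be invoked, following the analysis of \cite{Andreu2006}. Establishing the sign of these measure-valued cross terms, rather than the mollification bookkeeping, is the genuinely delicate part of the argument.
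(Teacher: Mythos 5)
You should first note that the paper does not prove this theorem at all: it is quoted verbatim as a background result, introduced with ``The following comparison principle was shown in \cite{Andreu2006}''. The paper's only original contribution here is the one-line verification that Assumptions \ref{compas} hold in its framework, via $|h(z,\xi)|\le \|\psi\|_\infty |z|\,|\xi|$. So there is no internal proof to compare against; the relevant benchmark is the proof in \cite{Andreu2006}, and your plan --- Kruzhkov doubling of variables adapted to the entropy framework, with the $BV$ regularity of $\overline u$ making the pairing of the two formulations legitimate, culminating in a Kato-type inequality and an endpoint evaluation via $C([0,T];L^1)$ continuity --- is indeed the strategy of that reference. In that sense your outline is sound and correctly identifies where Assumptions \ref{compas} enter.

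Two caveats. First, what you have written is a roadmap, not a proof: you yourself flag that the decisive step is showing that the measure-valued cross terms (the jump and Cantor contributions to $h_S$, $h_T$) carry the right sign and survive the mollification limit, and you defer exactly that step to ``the analysis of \cite{Andreu2006}''. Since this is the entire technical content of the theorem in the flux-saturated setting, the proposal cannot stand alone; as a blind reconstruction it reproduces the scaffolding of the cited argument while outsourcing its core. Second, a smaller misalignment: you invoke the monotonicity $D\psi\ge 0$ of $\psi=\nabla\Phi$, i.e.\ the paper's special potential structure, but Theorem \ref{theocomp} is stated for the general flux $\a$ under assumptions $(\mathrm{H})$; the correct hypothesis to use at that point is the convexity of the Lagrangian $f$ in $\xi$ (equivalently monotonicity of $\a(z,\cdot)=\nabla_\xi f(z,\cdot)$), which is part of $(\mathrm{H})$, not the specific form $\a(z,\xi)=z\psi(\xi/z)$. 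With that substitution your sketch is consistent with the actual (external) proof.
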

Although we will not require them, some extensions of this result have been shown in \cite{Giacomelli2015}, which enable to consider some instances of sub- and supersolutions that are not globally integrable.

\subsection{Rankine--Hugoniot jump conditions}
We borrow some notations from \cite{Caselles2011}. Assume that $u \in BV_{loc}(Q_T)$. Let $\nu:=\nu_u=(\nu_t,\nu_x)$ be the unit normal to the jump set of $u$ and $\nu^{J_{u(t)}}$ the unit normal to the jump set of $u(t)$. We write $[u](t,x):=u^+(t,x)-u^-(t,x)$ for the jump of $u$ at $(t,x)\in J_u$ and $[u(t)](x):=u(t)^+(x)-u(t)^-(x)$ for the jump of $u(t)$ at the point $x \in J_{u(t)}$. Without losing generality, we assume that $u^+>u^-$ in what follows; we also assume $u^-\ge 0$. 
\begin{definition}
\label{def:vsalto}
Let $u \in BV_{loc}(Q_T)$ and let $\z \in L^\infty ([0,T]\times \R^d,\R^d)$ be such that $u_t = \div \z$ in $\mathcal{D}'(Q_T)$. We define the speed of the discontinuity set of $u$ as $\v(t,x) = \frac{\nu_t(t,x)}{|\nu_x(t,x)|}\, \mathcal{H}^d$-a.e. on $J_u$.
\end{definition}
The following result summarizes some statements proved in \cite{Caselles2011}.
\begin{lemma}
\label{lem:conrh}
Let $u\in BV_{loc}(Q_T)$ and let $\z \in L^\infty([0,T]\times \R^d;\R^d)$ be such that $u_t = \div\, \z$ in $\mathcal{D}'(Q_T)$. Then:
\begin{enumerate}
\item There holds that
$$
\mathcal{H}^d \left(\{(t,x) \in J_u/ \nu_x(t,x)=0\}\right)=0
$$
and hence Definition \ref{def:vsalto} makes sense.

\item For a.e. $t\in (0,T)$ we have
$$
[u(t)](x) \v(t,x) = [[\z \cdot \nu^{J_{u(t)}}]]_{+-}\quad \mathcal{H}^{d-1}-\mbox{a.e. in}\ J_{u(t)},
$$
where $[[\z \cdot \nu^{J_{u(t)}}]]_{+-}$ denotes the difference of traces from both sides of $J_{u(t)}$.
\end{enumerate}
\end{lemma}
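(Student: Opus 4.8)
The plan is to recognize the equation $u_t=\div\z$ as the assertion that the space--time vector field $W:=(u,-\z)\in\R^{d+1}$, with time component $u$ and spatial components $-\z$, is divergence-free on $Q_T$, i.e. $\mathrm{div}_{(t,x)}W=u_t-\div_x\z=0$ in $\mathcal D'(Q_T)$. Since $u\in BV_{loc}(Q_T)$, its distributional time derivative is a Radon measure, and the identity forces $\div_x\z$ to coincide with that measure; hence $\z$ is a bounded divergence-measure field and $W$ is a divergence-free divergence-measure field. I would then invoke the theory of normal traces for such fields (Anzellotti's pairing / Chen--Frid, exactly as used in \cite{Caselles2011}) to give meaning, $\mathcal H^{d}$-a.e. on the rectifiable jump set $J_u$, to the one-sided normal traces of $\z$ along $J_u$ and to the localized Gauss--Green formula. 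The structural fact I would extract is that, because $\z$ is purely spatial, its normal trace pairs only with the spatial part $\nu_x$ of the space--time normal $\nu=(\nu_t,\nu_x)$ and is dominated by $\|\z\|_\infty|\nu_x|$; in particular it vanishes wherever $\nu_x=0$.

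From the vanishing of the space--time divergence I would read off the jump relation on $J_u$: the jump part of $\mathrm{div}_{(t,x)}W$ concentrated on $J_u$ is $\big([u]\,\nu_t-[[\z\cdot\nu_x]]_{+-}\big)\,\mathcal H^{d}\lfloor J_u$, whence
\[
[u]\,\nu_t=[[\z\cdot\nu_x]]_{+-}\qquad \mathcal H^{d}\text{-a.e. on }J_u,
\]
where $[u]=u^+-u^-$ and $[[\,\cdot\,]]_{+-}$ is the difference of the two one-sided traces. Part (1) is then immediate: on the subset $\{\nu_x=0\}\cap J_u$ the right-hand side vanishes (the normal trace of the spatial field $\z$ is zero there), forcing $[u]\,\nu_t=0$; since $|\nu_t|=1$ on that set while $[u]\neq0$ everywhere on $J_u$, the set must have zero $\mathcal H^{d}$-measure. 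Hence $\nu_x\neq0$ $\mathcal H^{d}$-a.e. on $J_u$ and the speed $\v=\nu_t/|\nu_x|$ of Definition~\ref{def:vsalto} is well defined.

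For Part (2) I would pass from the space--time picture to time slices. Using the slicing theory for $BV$ functions (see \cite{Ambrosio2000}), for $\mathcal L^1$-a.e. $t$ the slice $u(t,\cdot)$ lies in $BV(\R^d)$, its jump set is the corresponding slice of $J_u$, and the spatial jump normal is $\nu^{J_{u(t)}}=\nu_x/|\nu_x|$. Writing $\nu_x=|\nu_x|\,\nu^{J_{u(t)}}$ and dividing the displayed jump relation by $|\nu_x|$ (legitimate since $\nu_x\neq0$ a.e. by Part (1)) yields, for a.e.\ $t$ and $\mathcal H^{d-1}$-a.e.\ on $J_{u(t)}$,
\[
[u(t)]\,\v(t,x)=[[\z\cdot\nu^{J_{u(t)}}]]_{+-},
\]
which is the asserted Rankine--Hugoniot identity, the bracket on the right being the difference of the two one-sided normal traces of $\z$ across $J_{u(t)}$.

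I expect the genuine technical work to lie entirely in the first paragraph: rigorously defining the one-sided normal traces of the merely $L^\infty$ field $\z$ along the rectifiable set $J_u$ and justifying the localized Gauss--Green / jump formula for the divergence-measure field $W$, together with the compatibility between the space--time jump set $J_u$ and the time-slice jump sets $J_{u(t)}$. These are precisely the divergence-measure-field and $BV$-slicing results imported from \cite{Caselles2011}; once they are in place, the algebraic derivation of the jump condition, of the non-degeneracy of $\nu_x$, and of the speed formula is routine.
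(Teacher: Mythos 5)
The paper offers no proof of this lemma at all --- it is stated verbatim as a summary of results proved in \cite{Caselles2011} --- and your reconstruction follows exactly the route of that cited proof: viewing $(u,-\z)$ as a divergence-free space--time divergence-measure field, using Anzellotti/Chen--Frid normal traces to obtain $[u]\,\nu_t=[[\z\cdot\nu_x]]_{+-}$ $\H^{d}$-a.e.\ on $J_u$, exploiting the trace bound $\|\z\|_\infty|\nu_x|$ to kill the set $\{\nu_x=0\}\cap J_u$ (since $[u]\neq 0$ and $|\nu_t|=1$ there), and then $BV$ slicing to pass to the time-sliced Rankine--Hugoniot identity. Your proposal is correct, matches the source's argument, and rightly identifies that the genuine technical content (one-sided normal traces of the merely bounded field $\z$ on the rectifiable set $J_u$, and the compatibility of $J_{u(t)}$ with the slices of $J_u$) is precisely what is imported from \cite{Caselles2011}.
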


%%%%%%%%%%%
%%%%%%%%%%%%%%%%%%%%%%%  
\medskip \noindent {\it Acknowledgements.}  J.C. acknowledges support from ``Plan Propio de Investigaci\'on, programa 9'' (funded by Universidad de Granada and european FEDER (ERDF) funds),  Project RTI2018-098850-B-I00 (funded by MICINN and european FEDER funds) and Projects A-FQM-311-UGR18 and P18-RT-2422 (funded by Junta de Andaluc\'ia and european FEDER funds). We thank Jos\'e M. Maz\'on and Juan Soler for their useful comments on an earlier version of this document.
 %%%%%%%%%%%%%%%%%%%%%%%%%%%%%%%%%

%%%%%%%%%%%%%%%%%%

%%%%%%%%%%%%%
\end{document}